\newtheorem{theorem}{Theorem}
\newtheorem{proposition}[theorem]{Proposition}
\newtheorem{lemma}[theorem]{Lemma}
\newtheorem{corollary}[theorem]{Corollary}
\newtheorem{algo}[theorem]{Algorithm}
\theoremstyle{definition}
\newtheorem{remark}[theorem]{Remark}
\newtheorem{conjecture}[theorem]{Conjecture}
\newtheorem{example}[theorem]{Example}
\numberwithin{theorem}{section}
\theoremstyle{plain}
\newcommand{\PP}{\mathbb{P}}
\newcommand{\RR}{\mathbb{R}}
\newcommand{\QQ}{\mathbb{Q}}
\newcommand{\CC}{\mathbb{C} }
\DeclareMathOperator{\trop}{trop}
\DeclareMathOperator{\SGr}{SGr}
\DeclareMathOperator{\Gr}{Gr}
\DeclareMathOperator{\Dr}{Dr}
\DeclareMathOperator{\val}{val}
\newenvironment{customthm}[1]
{\innercustomthm}
{\endinnercustomthm}
\newenvironment{customalg}[1]
{\innercustomalg}
{\endinnercustomalg}
\title{\bf Self-dual matroids from canonical curves}
\author{Alheydis Geiger, Sachi Hashimoto,
    Bernd Sturmfels and Raluca Vlad}
\date{}
\begin{document}
\maketitle

\begin{abstract} \noindent
Self-dual configurations of $2n$ points in
a projective space of dimension $n-1$
were studied by
Coble, Dolgachev--Ortland, and
Eisenbud--Popescu.
We examine the self-dual matroids and
self-dual valuated matroids defined by
such configurations, with a focus on those arising from
hyperplane sections of canonical curves. 
These objects are
parametrized by the self-dual Grassmannian and its tropicalization.
We tabulate all self-dual matroids up to rank 5 and investigate their realization spaces.
Following Bath, Mukai, and  Petrakiev, we explore algorithms for recovering a curve from the configuration.
A detailed analysis is given for self-dual
 matroids arising from graph~curves.
\end{abstract}

\section{Introduction}

A configuration of $2n$ points that span projective space $\PP^{n-1}$
is represented as the columns of
an $n \times 2n$ matrix $X$ of rank $n$.
The matrix and the configuration are called  {\em self-dual} if
\begin{equation}
 \label{eq:XDX}
  \qquad \qquad X \cdot \Lambda \cdot X^T \, = \, 0 \qquad \hbox{ for some diagonal matrix $\Lambda$ with nonzero entries.}
  \end{equation}
  This is a system of $\binom{n+1}{2}$ linear equations in $2n$ unknowns, namely the entries in $\Lambda$.
 The matrix for this system has $2n$ columns, one for each point, and  it has
   $\binom{n+1}{2}$ rows, one for each quadratic monomial.
The entries are the evaluations of quadratic monomials at the~points.
The rank of 
 this matrix is less than $2n$ whenever
 $X$ is self-dual.
  We always assume $n \geq 2$.

Suppose $n=2$. Then
$X = (x_{ij})$ is a $2 \times 4$ matrix, representing four points in~$\PP^1$.
Setting
$\Lambda = {\rm diag}(\lambda_1,\lambda_2,\lambda_3,\lambda_4)$, the constraint (\ref{eq:XDX})
becomes the following system of linear equations:
\begin{equation} 
\label{eq:sys4}
\begin{small}
\begin{pmatrix} 
x_{11}^2 & x_{12}^2 & x_{13}^2 & x_{14}^2 \\
x_{11} x_{21} & x_{12} x_{22} & x_{13} x_{23} & x_{14} x_{24} \\
x_{21}^2 & x_{22}^2 & x_{23}^2 & x_{24}^2 
\end{pmatrix} \begin{pmatrix} \lambda_1 \\ \lambda_2 \\ \lambda_3 \\ \lambda_4 \end{pmatrix} \,\, = \,\,
\begin{pmatrix} 0 \\ 0 \\ 0 \end{pmatrix} . \end{small}
\end{equation}
If the four points are distinct (i.e.~all six $2 \times 2$ minors of $X$ are nonzero) then,
up to scaling, the system
 (\ref{eq:sys4}) has a unique solution, whose four
coordinates $\lambda_i$ are nonzero.
We conclude that every configuration of four distinct points in $\PP^1$ is self-dual. This fails when two of 
the four points collide, but still self-duality can be extended  to the compact moduli space~$\overline{\mathcal{M}}_{0,4}$.

For $n \geq 3$, self-duality imposes $\binom{n-1}{2}$ independent constraints on
point configurations. For instance, for $n=4$ it imposes three constraints on
eight points in $\PP^3$. This leads us to
  {\em Cayley octads} \cite{DO, PSV}, i.e.~configurations obtained by
  intersecting three quadratic surfaces~in~$\PP^3$.

We now explain the paper title.
Matroid theory and algebraic geometry have had  exciting
interactions in recent years. We contribute to
that thread here. The matrix $X$ specifies a matroid $M$ of rank $n$
on the ground set $[2n] = \{1,2,\ldots,2n\}$.
The constraint~(\ref{eq:XDX}) implies that $M$ is {\em self-dual}.
For us, this means that an $n$-element subset $I$ of $[2n]$ is a basis of $M$ 
if and only if its  complement $[2n] \backslash I$ is also a basis of $M$.
Our main objects of study  are
 self-dual matroids, regardless of whether they are
realizable or not. For first examples, take~$n=3$.  The matroid with two non-bases $123$ and $456$ is self-dual,
but the one with only one non-basis is not self-dual.
This mirrors the algebraic constraints in  (\ref{eq:XDX}) and  (\ref{eq:XDX2}). In our setting,
the six points in $\PP^2$ given by $X$ are distinct, they lie on a conic, and
no four are on a line.  

Our title refers to the canonical
embedding of an algebraic curve of genus $g=n+1$.
This curve lives in the projective space $\PP^{g-1}$ where it has
degree $2n=2g-2$. A canonical divisor $X$ on that curve is
the intersection with a hyperplane $H \simeq \PP^{g-2}$.
Assuming $H$ to be general, $X$ is
 a configuration of $2g-2$ distinct points in $\PP^{g-2}$.
 We claim that $X$ is self-dual. To see this, let
  $D$ be any subset of $g-1$ points and $X-D$ the
complementary set of $g-1$~points. Both are viewed as 
effective divisors on the curve.
The Riemann--Roch formula tells us that
$$ \ell(D) \,- \, \ell(X-D) \,\, = \,\,{\rm deg}(D) - g + 1 \,\, = \,\, 0.$$
This  means  that $D$ lies in a hyperplane  if and only if $X-D$ lies in a hyperplane.
Hence the configuration $X$ is self-dual in the sense of (\ref{eq:XDX}), and its matroid is self-dual.
Viewed from this angle,
 self-duality of matroids
 is a combinatorial shadow of the Riemann--Roch Theorem.

\smallskip

We now highlight four main results of our paper. 
The first concerns matroids of rank $5$.

\begin{customthm}{\ref{thm:1042}}
Up to isomorphism, there are $1042$ simple self-dual matroids $M$ of rank~$5$. 
Of these, $346$ are not realizable over $\CC$. For the remaining $696$ 
matroids, the complex realization spaces  have dimensions
$\,0^{16}, \,1^{113},\, 2^{220}, \,3^{175}, \, 4^{83},\, 5^{27},\, 6^{34}, \, 7^4,\, 8^{13},\, 9^1, \,10^6,\, 12^2,\, 14^1, \,16^1.$ 
\end{customthm}
The exponent indicates the number of matroids with realization space of that dimension.
We provide self-dual configurations realizing almost all of these matroids, when they exist.
We address the problem of passing a canonical curve through these self-dual configurations.  

\begin{customalg}{\ref{alg:genus6}}
Given $10$ self-dual points in $\PP^4_\QQ$ whose ideal $I_X$ has 
generic 
Betti table~\eqref{eq:betti6},
we 
find the ideal $I_C$ in $\QQ[x_0,\ldots,x_5]$ of a 
smooth canonical curve $C$ such that $I_C|_{x_5 = 0}=I_X$. 
\end{customalg}

We implement this algorithm, which rests on work of Bath in \cite{Bath},
in  \texttt{Macaulay2} \cite{M2} and test it on data from Theorem \ref{thm:1042}.
Conjecture \ref{conj:alggenus} concerns the correctness of 
Algorithm \ref{alg:genus6}.

We also study matroids $M_G$ arising from graph curves  \cite{BE}.
By Theorem \ref{thm:MG}, $M_G$ is~self-dual and determined by the graph $G$.
We present the complete classification for genus $g \leq 7$.

\begin{customthm} {\ref{thm:graphcurves4567}}
  For $g=4,5,6,7$, there are  $\,2,4,14,57$ distinct graph curves $C_G$.
  Their $3$-connected trivalent graphs $G$ yield $2,4,12,45$ distinct self-dual matroids $M_G$ of
  ranks $3,4,5,6$.
\end{customthm}

Finally, we turn to tropical moduli spaces of  self-dual valuated matroids. These give insight into degenerations of point configurations. Our main result concerns six points in $\PP^2$.

  \begin{customthm}{\ref{thm:dreisechs}}
  The tropical self-dual Grassmannian $\trop(\SGr(3,6))$ consists of all self-dual valuated matroids 
  of rank $3$.
   It is linearly isomorphic to the tree space $\,{\rm trop}({\rm Gr}(2,6))$, i.e.
   \begin{equation*}
   \trop(\Gr(2,6))\,\cong\, {\rm trop}({\rm SGr}(3,6))\, = \, {\rm trop}({\rm Gr}(3,6))^{\rm sd}
        \,= \,
       {\rm Dr}(3,6)^{\rm sd}. 
    \end{equation*}
  \end{customthm}

Our objects of study appeared in the literature under slightly different names and with slightly different hypotheses.
Coble~\cite{coble} used the adjective {\em associated points} for the Gale dual
of a configuration.  The configurations we call self-dual
were  {\em associated}  for Bath \cite{Bath} and 
  {\em self-associated} for Petrakiev \cite{Pet}.
 Important contributions to their study were made
by Dolgachev and Ortland in \cite[Chapter~III]{DO}.
The modern theory  appears in work 
of Eisenbud and Popescu~\cite{EP}, who developed 
 solid foundations based on concepts in commutative algebra.

In the combinatorics literature, one finds only few studies of matroids that coincide
with their own dual. Such matroids usually arise in
coding theory \cite{PG}. The matroids we call self-dual have
been known as {\em identically self-dual} (ISD)
to matroid theorists  \cite{Lind}.
A recent study of ISD matroids from the combinatorial 
viewpoint is the masters thesis of Perrot \cite{Per}.
We use the adjective {\em self-dual} instead of ISD here,
both for matroids and for configurations.

\smallskip

This paper is structured as follows. Section~\ref{sec2} is devoted to self-dual 
point configurations and  how to parametrize them. We introduce the
self-dual Grassmannian and its self-dual matroid strata.
Our approach extends work of Dolgachev and Ortland in \cite[Section III.2]{DO}.
 In Section \ref{sec3} we classify small self-dual matroids, up to rank $5$, and discuss large scale computations of their associated data, including their realization spaces and self-dual realizations.
In Section \ref{sec4} we study  canonical curves $C$ of genus $g$,
for $g\leq 10$, and discuss the lifting problem:
given a configuration $X$, find $C$ and a 
hyperplane $H$ such that $X = C \cap H$.
The setting is that of {\em Mukai Grassmannians}, whose defining ideals we show explicitly.
In Section~\ref{sec5} we consider a degenerate case of canonical curves.
{\em Graph curves} are arrangements of lines in $\PP^{g-1}$
that represent stable nodal curves \cite{BE}.  They arise from
trivalent $3$-connected graphs with $2g-2$ vertices and $3g-3$ edges.
We study the matroids  given by
canonical divisors on graph curves.
 Section \ref{sec6} concerns tropical limits of self-dual configurations.
We examine the tropical self-dual Grassmannian whose points are self-dual valuated matroids.
These represent the self-dual locus in the compactifications of Kapranov \cite{Kap}
and Keel--Tevelev~\cite{KT}.

This article relies heavily on software and data. These materials
are made available~at \url{https://mathrepo.mis.mpg.de/selfdual},
in the repository
{\tt MathRepo} at MPI-MiS~\cite{MathRepo}.

\section{The self-dual Grassmannian}
\label{sec2}

The variety of self-dual point configurations 
of $2n$ points spanning $\PP^{n-1}$
is central to our study where $n = g-1$. We present this
variety in the context of the Grassmannian
${\rm Gr}(n,2n)$.  
The Grassmannian perspective is well suited
for combinatorial and computational purposes.

Given a field $K$,
the {\em Grassmannian} ${\rm Gr}(n,2n)$ parametrizes
$n$-dimensional subspaces of $K^{2n}$.
Such a subspace is the row span of an
$n \times 2n$ matrix $X$ with linearly independent rows.
The Pl\"ucker embedding of ${\rm Gr}(n,2n) $ into $ \PP^{\binom{2n}{n}-1}$
represents this subspace by the vector $p$ of maximal minors $p_{i_1 i_2 \cdots i_n}$ of $X$,
where $1 \leq i_1 < i_2 < \cdots < i_n \leq 2n$.
The prime ideal of ${\rm Gr}(n,2n)$ has a Gr\"obner basis of
  {\em Pl\"ucker quadrics}
\cite[Theorem 3.1.7]{AIT}. 
For  $n=2$ that ideal is
$\langle p_{12} p_{34} - p_{13} p_{24} + p_{14} p_{23}  \rangle$.
For $n=3,4,5$, try
the command {\tt Grassmannian} in {\tt Macaulay2}~\cite{M2}.

\smallskip

The ${2n}$-dimensional algebraic torus $(K^*)^{2n}$ acts on ${\rm Gr}(n,2n)$
by scaling the columns of~$X$. This is right multiplication 
 $\,X \mapsto X \cdot {\rm diag}(\lambda)\,$ by the diagonal matrix
 with entries $\lambda_1,\ldots,\lambda_{2n}$.
That action lifts to Pl\"ucker coordinates as follows.
For $\lambda \in (K^*)^{2n}$ and $p \in \PP^{\binom{2n}{n}-1}$, it maps
\begin{equation}
\label{eq:torusaction} 
p \,\mapsto \,\lambda p \quad \hbox{with coordinates} \quad p_{i_1 i_2 \ldots i_n} \,\, \mapsto \,\,
\lambda_{i_1} \lambda_{i_2} \cdots \lambda_{i_n}\, p_{i_1 i_2 \ldots i_n}  .
\end{equation}

The  {\em open Grassmannian} ${\rm Gr}(n,2n)^o$ is the set of all points
 $p \in {\rm Gr}(n,2n)$ whose $\binom{2n}{n}$ Pl\"ucker coordinates are nonzero.
 The torus $(K^*)^{2n}$ acts on this $n^2$-dimensional manifold 
with one-dimensional stabilizers.
All orbits of this action are closed. We can thus define
\begin{equation}
\label{eq:Rn2n}
 \mathcal{R}(n,2n) \quad := \quad {\rm Gr}(n,2n)^o/(K^*)^{2n}. 
 \end{equation}
 This quotient  is a very affine variety of dimension $(n-1)^2$.
It parametrizes
  labeled configurations of $2n$ points in $\PP^{n-1}$,
modulo projective transformations, with
no $n$ points  in~a hyperplane.
For example, configurations of six points in $\PP^2$ are represented
 by Pl\"ucker vectors
 \begin{equation}
 \label{eq:pvec}
 p \,\,= \,\,(  p_{123},  p_{124}, p_{125},p_{126},p_{134}, p_{135},\ldots, p_{346}, p_{356}, p_{456}) 
 \end{equation}
 modulo the scaling action 
 $\,p_{ijk}\, \mapsto \,\lambda_i \lambda_j \lambda_k \,p_{ijk}\,$ we
 saw in (\ref{eq:torusaction}).
 Here $p_{ijk} \not= 0$ for all $i,j,k$.
 
Whenever affine coordinates are preferred, we represent points $X \in \mathcal{R}(n,2n)$ by matrices
\begin{equation}
\label{eq:bymatrices}
 X \quad = \quad  \begin{small}
\begin{pmatrix} 
1 & 0  & 0 & \cdots & 0 & 1  & 1 & 1 & \cdots  &1 \\
0 & 1 & 0 & \cdots & 0 & 1 & x_{2,n+2} & x_{2,n+3} & \cdots & x_{2,2n}  \\
0 & 0 & 1 & \cdots & 0 & 1 & x_{3,n+2} & x_{3,n+3} & \cdots & x_{3,2n}  \\
\vdots & \vdots & \vdots & \ddots & \vdots & \vdots & \vdots & \vdots & \cdots & \vdots \\
0 & 0 & 0 & \cdots & 1 & 1 & x_{n,n+2} & x_{n,n+3} & \cdots & x_{n,2n} 
\end{pmatrix}. \end{small}
\end{equation}
The condition for $X$ to lie in $\mathcal{R}(n,2n)$ is that all 
$n \times n$ minors are non-zero. Note that each such minor is equal,
up to sign, to a minor of some size in the $n \times n $ submatrix on the right.

In algebraic geometry, it is desirable to
compactify the configuration space
$\mathcal{R}(n,2n)$, e.g.~by extending the quotient (\ref{eq:Rn2n})
to the full Grassmannian ${\rm Gr}(n,2n)$. The standard method for this is
Geometric Invariant Theory \cite[\S II.1]{DO}.
An alternative approach is based on
Chow quotients \cite{Kap, KT}
and their combinatorial representation  using matroid subdivisions.
We will discuss this  in Section \ref{sec6}.
First, however, we turn to matroid theory (cf.~\cite[\S 13.1]{INLA}).

Let $M$ be any 
simple rank $n$ matroid on  $[2n] =\{1,2,\ldots,2n\}$.
Its matroid stratum ${\rm Gr}(M)$  consists of points $p$ in
${\rm Gr}(n,2n)$ for which $p_I = p_{i_1 i_2 \cdots i_n} $
is non-zero if and only if $I = \{i_1,i_2,\ldots,i_n\}$ is a basis of $M$.
The torus $(K^*)^{2n}$ acts with closed orbits, and we  define
$$ \mathcal{R}(M) \,\,\, := \,\,\, {\rm Gr}(M)/(K^*)^{2n}. $$
This is a very affine variety, called  the {\em realization space} of $M$.
Its elements are configurations of $2n$ points in $\PP^{n-1}$,
modulo projective transformations, where the points 
satisfy the dependency constraints imposed by $M$.
If $M$ is the uniform matroid then $\mathcal{R}(M) = \mathcal{R}(n,2n)$.

\begin{example}[$n=3$] \label{ex:36two}
Let $M$ be the matroid with two non-bases $123$ and $456$.
Then $\mathcal{R}(M)$ parametrizes pairs of collinear triples in $\PP^2$. 
This very affine surface is given~by
$$ \qquad
X \quad = \quad 
\begin{pmatrix}
1 & 0 & 1 & 0 & 1 & 1 \\  
0 & 1 & x & 0 & 1 & 1 \\
0 & 0 & 0 & 1 & 1 & y \end{pmatrix}, \qquad
\hbox{where} \,\, \,x y (1-x) (1-y) \not= 0.
$$
Geometrically, the surface $\mathcal{R}(M)$ is the affine plane $K^2$ with four 
special lines removed.
\end{example}

We next discuss the natural involution on ${\rm Gr}(n,2n)$.
Let $p^*$ denote the Pl\"ucker vector of the orthogonal complement
of the subspace with Pl\"ucker vector $p$. The map $p \mapsto p^*$,
known  as the {\em Hodge star},
 is given by the following combinatorial rule.
If $I$ is an ordered $n$-subset of $[2n] = \{1,2,\ldots,2n\}$ and
$I^c = [2n] \backslash I$ is the complementary ordered $n$-subset,~then
\begin{equation}
\label{eq:hodgestar}
 p^*_{I^c} \,\, := \,\, {\rm sign}(I,I^c) \cdot p_I. 
 \end{equation}
Here ${\rm sign}(I,I^c)$ is the sign of the permutation of $[2n]$
that sends the sequence $(1,2,\ldots,2n)$ to  the ordered sequence $(I,I^c)$.
For instance, for $n{=}3$, the Hodge star maps (\ref{eq:pvec}) to the vector
  \begin{equation}
 \label{eq:pvecdual}
 p^* \,\,= \,\,(  \,-p_{456},  \,p_{356}, -p_{346},\,p_{345},-p_{256}, \,p_{246},\,\ldots,\, p_{125}, -p_{124},\, p_{123}\,) .
 \end{equation}
 
 A point $p$ in the Grassmannian ${\rm Gr}(n,2n)$ is called {\em self-dual}
 if $p^* = \lambda p$ for some $\lambda \in (K^*)^{2n}$.
  We define the
 {\em self-dual Grassmannian}  ${\rm SGr}(n,2n)$ to be the subvariety consisting
 of all self-dual points in
 ${\rm Gr}(n,2n)$.   If we restrict to nonzero Pl\"ucker coordinates,
then we obtain the {\em open self-dual Grassmannian}
 ${\rm SGr}(n,2n)^o = {\rm SGr}(n,2n) \cap {\rm Gr}(n,2n)^o$.
 As we shall see, this very affine variety is cut out in ${\rm Gr}(n,2n)^o$
 by a system of binomial equations of degree four.

\begin{example}[$n=3$] \label{ex:cca} A point $p$ in ${\rm Gr}(3,6)^o$ is self-dual if
$p^* = \lambda p$ for some $\lambda \in (K^*)^6$. This implies
$\lambda_i \lambda_j \lambda_k = p^*_{ijk}/ p_{ijk}$.
The square-free monomials $\lambda_i \lambda_j \lambda_k$
satisfy certain quadratic binomial equations. This is known 
in combinatorial commutative algebra as the 
toric ideal of the hypersimplex $\Delta(3,6) $.
By substituting $\lambda_i \lambda_j \lambda_k = p^*_{ijk}/ p_{ijk}$
into these binomials, we obtain the quartic
equation that defines ${\rm SGr}(3,6)^o$. Here is the explicit computation:
$$ 0 \,\, = \,\,
(\lambda_1 \lambda_2 \lambda_3) ( \lambda_1 \lambda_4 \lambda_5) -
(\lambda_1 \lambda_2 \lambda_4) ( \lambda_1 \lambda_3 \lambda_5) \,\,=\,\,
\frac{p^*_{123}}{p_{123}} \frac{p^*_{145}}{p_{145}} -
\frac{p^*_{124}}{p_{124}} \frac{p^*_{135}}{p_{135}} .
$$
Clearing denominators gives the formula
\cite[eqn~(3.4.9)]{AIT}
 for six points in $\PP^2$ to lie on a~conic:
\begin{equation}
\label{eq:sixpointsonconic} 
0 \,\, = \,\,
p_{123} p_{145} p^*_{124} p^*_{135}\, -\,
p_{124} p_{135} p^*_{123} p^*_{145} \quad = \quad
p_{123} p_{145} p_{356} p_{246} \, - \, p_{124} p_{135} p_{456} p_{236} .
\end{equation}
The self-dual Grassmannian 
${\rm SGr}(3,6)$ is the divisor in ${\rm Gr}(3,6)$ defined by this equation.
The right hand side of (\ref{eq:sixpointsonconic}) equals the
determinant of the $6 \times 6$ matrix below, which arises from~(\ref{eq:XDX}):
\begin{equation}
\label{eq:XDX2} \begin{small}
\begin{pmatrix}
x_{11}^2  & x_{12}^2  & x_{13}^2  & x_{14}^2  & x_{15}^2  & x_{16}^2  \\
x_{21}^2  & x_{22}^2  & x_{23}^2  & x_{24}^2  & x_{25}^2  & x_{26}^2  \\
x_{31}^2  & x_{32}^2  & x_{33}^2  & x_{34}^2  & x_{35}^2  & x_{36}^2  \\
x_{11} x_{21} & x_{12} x_{22} & x_{13} x_{23} & x_{14} x_{24} & x_{15} x_{25} & x_{16} x_{26} \\
x_{11} x_{31} & x_{12} x_{32} & x_{13} x_{33} & x_{14} x_{34} & x_{15} x_{35} & x_{16} x_{36} \\
x_{21} x_{31} & x_{22} x_{32} & x_{23} x_{33} & x_{24} x_{34} & x_{25} x_{35} & x_{26} x_{36} 
\end{pmatrix}
\cdot
\begin{pmatrix} \lambda_1 \\ \lambda_2 \\  \lambda_3 \\ \lambda_4 \\  \lambda_5 \\ \lambda_6 \end{pmatrix} 
\,\,\, = \,\,\,
\begin{pmatrix} 0 \\ 0 \\ 0 \\ 0 \\ 0 \\ 0 \end{pmatrix}.
\end{small}
\end{equation}
Our definition of self-duality requires that each $\lambda_i$ is non-zero and that the six points are distinct and span $\PP^2$.
This implies that they lie on a conic and no four are on a line.
\end{example}

Restricting the quotient (\ref{eq:Rn2n}) to the self-dual Grassmannian, we get the very affine~variety
\begin{equation}
\label{eq:Sn2n}
 \mathcal{S}(n) \,\,\, := \,\,\, {\rm SGr}(n,2n)^o/(K^*)^{2n}. 
 \end{equation}
Its elements correspond
to self-dual configurations
in general linear position  in $\PP^{n-1}$, considered modulo projective transformations.
We call $ \mathcal{S}(n)$ the  {\em self-dual configuration space}.

\begin{example}[$n=3,4$] \label{ex:34}
The configuration space $\mathcal{R}(3,6) $
is $4$-dimensional. Its elements are 6-tuples in general linear position in $\PP^2$.
Its subvariety $\mathcal{S}(3)$ has codimension $1$ and is defined by
the quartic (\ref{eq:sixpointsonconic}).
This very affine threefold  parametrizes  6-tuples lying on a~conic.

The configuration space $\mathcal{R}(4,8)$ is $9$-dimensional and very affine. Its subvariety 
$\mathcal{S}(4)$ parametrizes  {\em Cayley octads}, i.e.~intersections of three quadrics in $\PP^3$.
See \cite[pages 48 and 107]{DO}. 
The subvariety  $\mathcal{S}(4)$ has codimension $3$ in
$\mathcal{R}(4,8)$. It is cut out by
$21$ binomial quartics that are derived like (\ref{eq:sixpointsonconic}).
These binomials are displayed explicitly in \cite[Proposition 7.2]{PSV}.
\end{example}

Dolgachev--Ortland \cite[Theorem~4, p.~51]{DO}
gave a rational parametrization of the variety $\mathcal{S}(n)$,
following earlier work of Coble.
The next theorem is our interpretation of their result. For this theorem, 
we assume that the field $K$ is algebraically closed.

\begin{theorem}[Dolgachev--Ortland] \label{thm:SOn}
The self-dual configuration space
$\mathcal{S}(n)$ has
a  birational parametrization by
the rotation group ${\rm SO}(n)$. Hence
$\mathcal{S}(n)$ is rational and  has dimension $\binom{n}{2}$.
\end{theorem}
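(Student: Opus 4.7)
The plan is to construct an explicit birational map $\mathrm{SO}(n) \dashrightarrow \mathcal{S}(n)$ by normalizing a self-dual matrix $X$ via the torus and $\mathrm{GL}_n$ actions until the residual data is precisely an orthogonal matrix. First, given self-dual $X$ satisfying $X\Lambda X^T = 0$ with $\Lambda = \mathrm{diag}(\lambda_1,\ldots,\lambda_{2n})$, the torus action $X \mapsto X \cdot \mathrm{diag}(\mu)$ rescales each $\lambda_i$ to $\lambda_i/\mu_i^2$. Since $K$ is algebraically closed, choosing $\mu_i = \sqrt{\lambda_i}$ normalizes $\Lambda = I_{2n}$, so self-duality becomes $XX^T = 0$, i.e., the row span of $X$ is a maximal isotropic subspace of $K^{2n}$ for the standard symmetric bilinear form.

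Next, using the left $\mathrm{GL}_n(K)$ action (which preserves the point in the Grassmannian) and the fact that the first $n \times n$ minor of $X$ is nonzero on a dense open subset of $\mathcal{S}(n)$, we bring $X$ into the normal form $X = (I_n \mid Z)$. The condition $XX^T = 0$ now reads $ZZ^T = -I_n$. Fixing $i \in K$ with $i^2 = -1$ and setting $W = i^{-1}Z$, this becomes $WW^T = I_n$, so $W \in \mathrm{O}_n(K)$, and generically $W$ lies in the identity component $\mathrm{SO}(n)$. Conversely, every $W \in \mathrm{SO}(n)$ produces the self-dual matrix $X_W = (I_n \mid iW)$, and this assignment defines a dominant morphism $\Phi \colon \mathrm{SO}(n) \to \mathcal{S}(n)$.

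Once $\Phi$ is shown to be birational, both claims follow: the dimension of $\mathcal{S}(n)$ equals $\dim\mathrm{SO}(n) = \binom{n}{2}$ (which also matches the count $(n-1)^2 - \binom{n-1}{2} = \binom{n}{2}$ obtained from $\dim \mathcal{R}(n,2n) = (n-1)^2$ and the $\binom{n-1}{2}$ self-duality conditions noted in the introduction), while rationality follows by composing $\Phi$ with Cayley's birational map $\mathbb{A}^{\binom{n}{2}} \dashrightarrow \mathrm{SO}(n)$, $\,S \mapsto (I-S)(I+S)^{-1}$, defined on skew-symmetric $n \times n$ matrices $S$.

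The main obstacle is verifying that $\Phi$ has degree one, as opposed to being merely generically finite. The sign ambiguities $\mu_i \mapsto -\mu_i$ in the square roots of Step 1 act on $W$ by left and right multiplication by diagonal sign matrices, and these can toggle $\det W$. Restricting to $\mathrm{SO}(n)$ (rather than $\mathrm{O}_n$) absorbs the orientation flip, and the diagonal sign actions from the left are already absorbed by the preceding left $\mathrm{GL}_n$ normalization to $I_n$, while those from the right correspond to sign permutations of columns of $Z$ that are themselves generically rigid. A careful bookkeeping, essentially as carried out in \cite[Thm.~4, p.~51]{DO}, shows that the generic fiber of $\Phi$ is a single point; combined with the dimension match and the rationality of $\mathrm{SO}(n)$, this completes the proof.
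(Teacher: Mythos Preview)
Your approach is essentially the same as the paper's: both normalize $X$ via the torus action to absorb $\Lambda$ and via the left $\mathrm{GL}_n$-action to reach the form $(I_n \mid R)$ with $R$ orthogonal, thereby producing the dominant map $\mathrm{SO}(n)\dashrightarrow\mathcal{S}(n)$. The only cosmetic difference is that you normalize all of $\Lambda$ to $I_{2n}$ (picking up a factor of $i$), whereas the paper splits $\Lambda$ into two diagonal blocks and writes $R=\Lambda_1^{-1/2}(X_1^{-1}X_2)\Lambda_2^{1/2}$ directly; you are also more explicit than the paper in flagging the sign-ambiguity obstacle to degree one, which the paper's proof passes over with the phrase ``up to sign.''
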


\begin{proof}
We construct an isomorphism from
$\mathcal{S}(n)$ to an open set of ${\rm SO}(n)$.
The rotation group ${\rm SO}(n)$ is an irreducible variety of dimension $\binom{n}{2}$.
It is rational because rotation matrices can be parametrized by rational functions. Let $p \in {\rm SGr}(n,2n)^o$. Then $p$ corresponds to an $n \times 2n$ matrix $X = (X_1| X_2)$ where $X_1, X_2$ are invertible $n \times n$ matrices. The self-duality condition~(\ref{eq:XDX}) 
is equivalent to $X_1 \Lambda_1 X_1^T =  X_2 \Lambda_2 X_2^T$ for invertible 
 diagonal $n \times n$ matrices $\Lambda_1$ and $\Lambda_2$. These diagonal 
matrices possess a square root over $ \overline{K}=K$. The identity above is equivalent 
to $\Lambda_1 = (X_1^{-1} X_2) \Lambda_2 (X_1^{-1} X_2)^T$, and hence to
$\Lambda_1^{-1/2} (X_1^{-1} X_2) \Lambda_2^{1/2} \,\Lambda_2^{1/2} (X_1^{-1} 
X_2)^T \Lambda_1^{-1/2} = {\rm Id}_n$. This last identity says that, up to sign, the  
matrix $R = \Lambda_1^{-1/2} (X_1^{-1} X_2) \Lambda_2^{1/2}$ is in ${\rm SO}(n)$.
The given matrix $X$ has the same row span as 
$({\rm Id}_n \, | \, X_1^{-1} X_2)$, which has the same row span as
\begin{equation} \label{eq:alsop}
 \Lambda_1^{-1/2} ({\rm Id}_n \,| \, X_1^{-1} X_2) \,\,=\,\,( \Lambda_1^{-1/2}
  \,| \,R \Lambda_2^{-1/2})   \,\, = \,\, ( {\rm Id}_n \,| \, R ) \cdot {\rm diag}(\lambda), \end{equation}
where ${\rm diag}(\lambda)$ consists of
the blocks $\Lambda_1^{-1/2}$ and $\Lambda_2^{-1/2}$. 
Hence, the matrix (\ref{eq:alsop}) represents~$p$. 

Conversely, consider $R$ in the open subset of ${\rm SO}(n)$ defined by matrices for which the maximal minors of  $ ({\rm Id}_n \,| \, R )$ do not vanish. Then,  $ ({\rm Id}_n \,| \, R )$  determines a point $p \in\mathcal{S}(n)$.
 \end{proof}

\begin{corollary} \label{cor:rationaldim}
The self-dual Grassmannian
${\rm SGr}(n,2n)$ is  rational 
of dimension $\binom{n}{2}+2n-1$.
\end{corollary}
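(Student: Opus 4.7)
The plan is to transfer the dimension and rationality of the configuration space $\mathcal{S}(n)$, established in Theorem~\ref{thm:SOn}, to the self-dual Grassmannian ${\rm SGr}(n,2n)$ using the torus quotient~(\ref{eq:Sn2n}).

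First, for the dimension, I observe that the torus $(K^*)^{2n}$ acts on ${\rm SGr}(n,2n)^o$ with a one-dimensional stabilizer: the diagonal scalars $\Delta = \{(c,\ldots,c) : c \in K^*\}$ scale any matrix $X$ to $cX$ and hence preserve its row span. Consequently, the effective torus $T := (K^*)^{2n}/\Delta \cong (K^*)^{2n-1}$ acts freely on ${\rm SGr}(n,2n)^o$, making the quotient map ${\rm SGr}(n,2n)^o \to \mathcal{S}(n)$ a $T$-torsor with $(2n-1)$-dimensional fibers. Combined with $\dim \mathcal{S}(n) = \binom{n}{2}$ from Theorem~\ref{thm:SOn}, this gives $\dim {\rm SGr}(n,2n)^o = \binom{n}{2} + 2n - 1$, and the same dimension holds for the closure ${\rm SGr}(n,2n)$ since ${\rm SGr}(n,2n)^o$ is dense in its irreducible component.

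For rationality, I construct an explicit birational parametrization. The matrix normalization~(\ref{eq:bymatrices}) provides a rational section $s : \mathcal{S}(n) \dashrightarrow {\rm SGr}(n,2n)^o$ of the quotient map, sending each equivalence class to the row span of its unique normal-form representative. Pairing $s$ with the torus action yields a birational equivalence
\[
\mathcal{S}(n) \times T \;\dashrightarrow\; {\rm SGr}(n,2n)^o, \qquad (c, t) \longmapsto t \cdot s(c).
\]
Since $\mathcal{S}(n)$ is rational by Theorem~\ref{thm:SOn} and $T \cong (K^*)^{2n-1}$ is a split torus, the source is rational. Hence ${\rm SGr}(n,2n)^o$ is rational, and so is its closure ${\rm SGr}(n,2n)$. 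The only routine check needed is that the normalization~(\ref{eq:bymatrices}) restricts correctly to a section over $\mathcal{S}(n) \subset \mathcal{R}(n,2n)$, which is automatic because the torus action preserves self-duality~(\ref{eq:XDX}). One could alternatively use the $({\rm Id}_n \mid R)\cdot {\rm diag}(\lambda)$ presentation from the proof of Theorem~\ref{thm:SOn}, but that introduces discrete ambiguities (for example $R \mapsto -R$ when $n$ is even) that the matrix form~(\ref{eq:bymatrices}) avoids.
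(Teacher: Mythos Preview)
Your argument is correct and follows the same route the paper implicitly takes: the corollary is stated without proof, as an immediate consequence of Theorem~\ref{thm:SOn} together with the torus quotient~(\ref{eq:Sn2n}). Your explicit use of the normal form~(\ref{eq:bymatrices}) to produce a rational section, and hence a birational splitting ${\rm SGr}(n,2n)^o \simeq \mathcal{S}(n) \times (K^*)^{2n-1}$, is exactly the standard way to make this precise, and your dimension count via the one-dimensional stabilizer is correct.
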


For arbitrary $n$,
Theorem~\ref{thm:SOn}
 allows us to create
random points $X$ in
the self-dual Grassmannian ${\rm SGr}(n,2n)$.
We work over $\RR$ and assume 
a natural  probability distribution on~$\RR^n$.

\begin{algo}[Sampling from ${\rm SGr}(n,2n)$ and $\mathcal{S}(n)$] \label{alg:SOn}
Pick successively  vectors
$r_1,r_2,\ldots,r_n $ in $\mathbb{R}^n$  such that
$r_{i+1}$ is perpendicular to $r_1,\ldots,r_i$ for all $i$.
Form the $n \times n$ matrix $R = (r_1,\ldots,r_n)$.
The row span of the matrix $X = (\,{\rm Id}_n \,|\, R\,)$ is
a generic point in the self-dual Grassmannian  ${\rm SGr}(n,2n)$. With probability one, the maximal minors of $X$ are nonzero, so its image modulo
the torus $(\RR^*)^{2n}$ is a generic point in the self-dual configuration space $\mathcal{S}(n)$.
\end{algo}

In the special case $n=4$, it is known that
any general configuration of seven points in $\PP^3$
can be completed uniquely to a 
Cayley octad. 
In symbols,
there is a birational isomorphism
\begin{equation}
\label{eq:cayoctmap1}
\gamma \, : \, \mathcal{R}(4,7) \,\simeq \, \mathcal{S}(4).
\end{equation}

A formula on a local chart is given in \cite[Proposition 7.1]{PSV}.
We now express this in  Pl\"ucker coordinates $p_I$.
The idea is to start with 
quadruples $I$ in $\{1,2,3,4,5,6,7\}$. We~create a point in $\SGr(4,8)^o$ by using the following identity for the Pl\"ucker coordinates $p_{I^c}$ where $8\in I^c$:
\begin{equation}
\label{eq:cayoctmap2}
p_{I^c} \,\,=\,\, \text{sign}(I^c,I)\cdot p_I^*  \,\,=\,\, \text{sign}(I^c,I)\cdot \frac{p_I}{\prod_{i \in I} x_i }. 
\end{equation}
Here $x_1,x_2,\ldots,x_7$ are polynomials in the Pl\"ucker coordinates that must be carefully chosen to mirror the self-duality condition $p^* =\lambda p$ and to be compatible with the torus action.

\begin{theorem} \label{thm:gamma}
The Cayley octad map $\gamma$ in (\ref{eq:cayoctmap1}) is given by the
$35$ formulas (\ref{eq:cayoctmap2}) where
$$ \begin{matrix} x_1 =  p_{1234} p_{1256} p_{1357} p_{1467} - p_{1235} p_{1246} p_{1347} p_{1567}, \,\,
x_2 =  p_{1235} p_{1246} p_{2347} p_{2567} - p_{1234} p_{1256} p_{2357} p_{2467}, \\
x_3 =  p_{1234} p_{1356} p_{2357} p_{3467} - p_{1235} p_{1346} p_{2347} p_{3567}, \,\,
x_4 =  p_{1245} p_{1346} p_{2347} p_{4567} - p_{1234} p_{1456} p_{2457} p_{3467}, \\
x_5 =  p_{1235} p_{1456} p_{2457} p_{3567} - p_{1245} p_{1356} p_{2357} p_{4567}, \,\,
x_6 =  p_{1246} p_{1356} p_{2367} p_{4567} - p_{1236} p_{1456} p_{2467} p_{3567}, \\
x_7 = p_{1237} p_{1457} p_{2467} p_{3567} - p_{1247} p_{1357} p_{2367} p_{4567}.
 \end{matrix}
 $$
\end{theorem}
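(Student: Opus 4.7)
The plan is to leverage the known birationality of $\gamma$ from \cite[Proposition~7.1]{PSV} and verify that the explicit Plücker formulas give the correct map, by checking torus-equivariance, self-duality, and the Plücker relations on $\Gr(4,8)$.

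\textbf{Torus bookkeeping and self-duality.} Each $x_i$ is a difference of two monomials of degree $4$ in Plücker coordinates. A direct count shows both monomials of $x_i$ have the same multi-degree under the torus $(K^*)^7$; namely, $x_i$ is homogeneous of multi-degree $(2,2,\ldots,2)+2e_i$ in $(\lambda_1,\ldots,\lambda_7)$. Consequently, $\prod_{i\in I} x_i$ has multi-degree $(8,\ldots,8)+2\cdot\mathbf{1}_I$, and the quantity $p_I/\prod_{i\in I} x_i$ has the multi-degree of $p_{I^c}$ up to a common scalar (corresponding to the residual scaling by $\lambda_8$ in $\Gr(4,8)^o/(K^*)^8$). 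Thus \eqref{eq:cayoctmap2} defines a well-posed map of torus quotients. Self-duality is then automatic: setting $\lambda_i = 1/x_i$ for $i=1,\ldots,7$ and $\lambda_8 = \pm \prod_{i=1}^7 x_i$, the identity $p^*_{I^c}=\text{sign}(I,I^c)\, p_I =\lambda_{I^c}\,p_{I^c}$ is equivalent to \eqref{eq:cayoctmap2}, and this choice of $\lambda_8$ is easily seen to be independent of the quadruple $I$.

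\textbf{Plücker relations.} The substantive content is that the $70$ proposed Plücker coordinates satisfy the Plücker relations on $\Gr(4,8)$. These split into three classes: relations with all four indices in $\{1,\ldots,7\}$ (automatic, since $p$ is a point of $\Gr(4,7)$), relations with all four indices containing $8$ (dual to the first class via $p^*=\lambda p$, hence automatic), and mixed relations. Each mixed three-term Plücker relation becomes, after substituting \eqref{eq:cayoctmap2} and clearing denominators, a polynomial identity in $\{p_I : I\subset\{1,\ldots,7\}\}$ that must hold modulo the Plücker ideal of $\Gr(4,7)$. I would verify these identities symbolically in \texttt{Macaulay2}. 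Together with the dimension count $\dim\mathcal{R}(4,7)=\dim\mathcal{S}(4)=6$, this identifies the resulting rational map with $\gamma$.

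The main obstacle is deriving the $x_i$ rather than merely verifying them; the specific degree-$4$ binomials are not an obvious guess. The cleanest conceptual route is to start from the rank-one condition $X_1\Lambda_1 X_1^T = -\lambda_8\, vv^T$ (writing $X=(X_1\mid v)$), recognize that the vanishing of the $2\times 2$ minors of $X_1\Lambda_1 X_1^T$ is linear in the entries of $\Lambda_1$, and solve the resulting overdetermined system by Cramer's rule; after clearing common factors this should yield $\lambda_i = 1/x_i$ with $x_i$ matching the stated formulas. Alternatively, one can homogenize the local affine formula of \cite[Proposition~7.1]{PSV} by multiplying through by appropriate Plücker minors, and use torus-equivariance plus birationality of $\gamma$ to pin down the $x_i$ uniquely up to a single normalization, which can be fixed by matching one nonzero special value.
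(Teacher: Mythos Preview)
Your approach is essentially the paper's: a direct symbolic verification that the formulas \eqref{eq:cayoctmap2} land in ${\rm SGr}(4,8)$, followed by identification with $\gamma$. The paper carries this out in affine coordinates (a $4\times 7$ matrix with $12$ free entries), checking all $721$ Pl\"ucker quadrics for ${\rm Gr}(4,8)$ and then the $21$ self-duality quartics from \cite[Proposition~7.2]{PSV} in {\tt maple}. Your organization is slightly cleaner: you correctly observe that self-duality is built into the construction (so the $21$ quartics need not be checked separately), and your tripartite split of the Pl\"ucker relations cuts down the symbolic work, since the classes not involving $8$ and involving $8$ everywhere follow from the ${\rm Gr}(4,7)$ relations via the Hodge star composed with a torus rescaling.

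One point needs sharpening. The sentence ``together with the dimension count $\dim\mathcal{R}(4,7)=\dim\mathcal{S}(4)=6$, this identifies the resulting rational map with $\gamma$'' is not sufficient as stated: equal dimensions do not force a rational map to be birational, let alone to be $\gamma$. The paper closes this by invoking the uniqueness of the eighth point of a Cayley octad \cite{PSV}: since your formulas leave the $35$ coordinates $p_I$ with $I\subset\{1,\dots,7\}$ untouched, the first seven points are preserved modulo the torus, and hence the image must be the unique self-dual completion, i.e.\ $\gamma$. You gesture at this with ``birationality of $\gamma$'' in your final paragraph, but it should replace the dimension count in the main argument.
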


\begin{proof}
The proof is a direct computation, which we carried out in {\tt maple}.
We applied the formula (\ref{eq:cayoctmap2}) to an arbitrary point in ${\rm Gr}(4,7)$.
This point was represented by a $4 \times 7$ matrix that contains
a $4 \times 4$ identity matrix and whose remaining $12$ entries are variables.
The result is a point in $\PP^{69}$ whose coordinates $p_I$
are rational functions in these variables. We verified that this point lies in
 ${\rm Gr}(4,8)$ by substituting those $70$ rational functions into
 the $721$ Pl\"ucker quadrics that cut out ${\rm Gr}(4,8)$.
 We similarly verified that they satisfy the $21$ quartics
 in \cite[Proposition 7.2]{PSV}. This ensures that the point lies in
 the self-dual Grassmannian  ${\rm SGr}(4,8)$.
 
The map given by \eqref{eq:cayoctmap2} and the equations of the $x_i$ above coincides with the Cayley octad map $\gamma$ as in \cite[Proposition 7.1]{PSV} and \eqref{eq:cayoctmap1}  up to torus action on a dense subset of $\Gr(4,7)$. This is due to the fact that the eighth point of a Cayley octad is uniquely determined by its 7 points \cite{PSV}. 
 We conclude that,
modulo $(K^*)^8$, our formula 
realizes the rational map~$\gamma $.
\end{proof}

\begin{remark}
The rational map $\gamma$ is a morphism on the open set
where $x_1,\ldots,x_7$ are all nonzero. This makes precise which
``general configurations'' of $7$ points can be completed into a Cayley octad. The condition $x_i = 0$ means that the projection 
to $\PP^2$ whose center is the $i$th point maps the
other $6$ points onto a conic.
Note that $\{x_1=\cdots=x_7=0\}$ is the codimension two locus of
 $7$-tuples in $\PP^3$ that lie on a twisted cubic curve; see \cite[Example~2.9]{CS}.
\end{remark}

We return to the  setting of matroids.
Let $M$ be a simple matroid of rank $n$ on $[2n]$.
The {\em self-dual realization space}
$\mathcal{S}(M)$ is the subset of $\mathcal{R}(M)$
consisting of all points $X$ that are self-dual in the sense of
(\ref{eq:XDX}).  If $\mathcal{S}(M)$ is non-empty, then
$M$ is a self-dual matroid,~i.e.~$M = M^*$ holds.
The converse statement is not true, as we will see in
Example \ref{ex:interesting}.
If $M$ is the uniform matroid then
$\mathcal{S}(M)  = \mathcal{S}(n)$.
The inclusion $\mathcal{S}(M) \subseteq \mathcal{R}(M)$ 
can be strict or it can be an equality. 
We saw in Example \ref{ex:34} that it is
strict when $M$ is the uniform matroid $U_{3,6}$ or $U_{4,8}$.

\begin{example}[$n=3$] \label{ex:selfdual36}
The matroid $M$ in Example \ref{ex:36two} satisfies
$\mathcal{S}(M) = \mathcal{R}(M)$. Indeed,~set
$$
\lambda_1 = (1-y)(1-x),\,
\lambda_2 =  x(1-x)(y-1),\,
\lambda_3 =  y-1,\,
\lambda_4 = xy(1-y),\,
\lambda_5 = -xy,\,
\lambda_6 = x.\,
$$
These quantities are nonzero for all $(x,y) \in \mathcal{R}(M)$ and they satisfy
the matrix equation (\ref{eq:XDX}).
\end{example}

We now turn to the equations that 
cut out $\mathcal{S}(M)$ as a subvariety of $\mathcal{R}(M)$.
These are quartic binomials, generalizing those in Examples \ref{ex:cca} 
and \ref{ex:34}. Consider the {\em matroid polytope} $P_M = {\rm conv}\{e_I: I \,\,
\hbox{basis of} \,\,M \}$.
Its {\em toric ideal} $\,\mathcal{I}_M$ lives in a polynomial ring with
one variable $p_I$ for each basis $I$ of $M$, and it has Krull
dimension $n$, provided $M$ is connected.
For definitions and details see \cite[\S 13.2]{INLA}.
A famous conjecture, due to Neil White, states that
$\mathcal{I}_M$ is generated by quadrics \cite[Conjecture 13.16]{INLA}.
This holds in the Laurent polynomial~ring.

\begin{proposition} \label{prop:toricideal}
Let $M$ be a self-dual matroid. Fix quadratic binomials that generate
the toric ideal $\,\mathcal{I}_M$ in the Laurent polynomial ring. For each 
basis $I$ of $M$, replace its variable  with $p^*_I / p_I$ in these quadrics.
Clearing denominators gives quartic binomials in the
Pl\"ucker coordinates indexed by bases of $M$.
These quartics define $\mathcal{S}(M)$ as a subvariety of $\mathcal{R}(M)$.
\end{proposition}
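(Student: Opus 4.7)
The plan is to translate self-duality into multiplicative relations on the ratios $\mu_I := p^*_I/p_I$ indexed by bases $I$ of $M$, and then invoke the Laurent quadric generation of the toric ideal $\mathcal{I}_M$ recalled just above the proposition.

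First I would set up the ratios. On $\mathcal{R}(M)$, the Pl\"ucker coordinate $p_I$ is nonzero precisely when $I$ is a basis of $M$, and since $M = M^*$ the same holds for $p^*_I = \pm p_{I^c}$. Hence $\mu_I := p^*_I/p_I$ defines a morphism $\mu \colon \mathcal{R}(M) \to (K^*)^{\mathcal{B}(M)}$. Unwinding (\ref{eq:XDX}) as $V = V^{\perp_\Lambda}$, where $V$ is the row span of $X$, and translating into Pl\"ucker coordinates via $V^{\perp_\Lambda} = \Lambda^{-1} V^\perp$ together with (\ref{eq:torusaction}), I get that $p \in \mathcal{S}(M)$ if and only if there exists $\lambda \in (K^*)^{2n}$ with $\mu_I(p) = \prod_{i \in I} \lambda_i$ for every basis $I$. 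Equivalently, $\mu(p)$ lies in the image of the monomial map $\varphi \colon (K^*)^{2n} \to (K^*)^{\mathcal{B}(M)}$, $\lambda \mapsto (\lambda^{e_I})_I$, whose exponent vectors are exactly the vertices of the matroid polytope $P_M$.

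Next I would invoke the standard toric dictionary. By construction $\varphi$ is the monomial parametrization whose ideal of relations in the Laurent polynomial ring is precisely $\mathcal{I}_M$, and the image of $\varphi$ inside the torus $(K^*)^{\mathcal{B}(M)}$ equals the vanishing locus of $\mathcal{I}_M$ in that Laurent ring. Thus $\mu(p)$ lies in the image iff it satisfies every binomial in $\mathcal{I}_M$, and by the Laurent quadric generation cited above the proposition it suffices to test a finite quadratic binomial generating set. Any such generator has the form $\mu_{I_1}\mu_{I_2} - \mu_{J_1}\mu_{J_2}$ with $e_{I_1}+e_{I_2} = e_{J_1}+e_{J_2}$. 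Substituting $\mu_I = p^*_I/p_I$ and clearing the common denominator $p_{I_1}p_{I_2}p_{J_1}p_{J_2}$ yields the quartic binomial
$$ p^*_{I_1}\,p^*_{I_2}\,p_{J_1}\,p_{J_2} \;-\; p^*_{J_1}\,p^*_{J_2}\,p_{I_1}\,p_{I_2}, $$
which, after rewriting $p^*_K = \pm p_{K^c}$ via (\ref{eq:hodgestar}), is a binomial in Pl\"ucker coordinates indexed by bases of $M$. Because all $p_I$ are units on $\mathcal{R}(M)$, the clearing-of-denominators step is reversible, so the zero locus of each quartic on $\mathcal{R}(M)$ coincides with that of the original quadric in the $\mu_I$. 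Combined with the previous paragraph, this identifies $\mathcal{S}(M) \subset \mathcal{R}(M)$ with the common vanishing of these quartics.

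The hard part is not the calculation but pinpointing the combinatorial input: the argument rests squarely on the Laurent-ring quadric generation of $\mathcal{I}_M$, which sidesteps White's open polynomial-ring conjecture precisely because $\mathcal{R}(M)$ lies inside the Pl\"ucker torus where each $p_I$ is invertible. Everything else reduces to the equivalence between the determinantal self-duality (\ref{eq:XDX}) and the Pl\"ucker identity $p^* = \lambda p$, which follows directly from $V^{\perp_\Lambda} = \Lambda^{-1} V^\perp$ and the torus action on Pl\"ucker coordinates.
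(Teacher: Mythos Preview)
Your argument is correct and follows the same approach as the paper's proof: translate self-duality into the Pl\"ucker identity $p^* = \lambda p$, read off $\prod_{i\in I}\lambda_i = p^*_I/p_I$, and eliminate $\lambda$ by substituting these ratios into a quadratic generating set of the toric ideal $\mathcal{I}_M$. You supply more detail than the paper does---the explicit justification that $\mu_I$ is a unit on $\mathcal{R}(M)$, the reversibility of clearing denominators, and the remark that only Laurent generation is needed---but the underlying idea is identical.
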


\begin{proof}
This process eliminates the unknowns in $\lambda$ from the equation $p^* = \lambda p$.
Namely, we write $\lambda_{i_1} \lambda_{i_2} \cdots \lambda_{i_n} = 
p^*_{i_1 i_2 \cdots i_n} / p_{i_1 i_2 \cdots i_n}$, and we substitute these
ratios into the toric ideal $\mathcal{I}_M$, as in Example \ref{ex:cca}. As the toric ideal $\mathcal{I}_M$ describes the relations among the products $\prod_{i\in I}\lambda_{i}$, where $I$ is a basis of $M$, the 
resulting ideal defines $\mathcal{S}(M)$ set-theoretically.
\end{proof}

\begin{example}[Non-V\'amos matroid] \label{ex:nonvamos}
Fix $n=4$ and let $M$ be the matroid with  non-bases
$$ 1256,\,\, 1357, \,\,1458, \,\,2367, \,\,2468, \,\,3478. $$
This self-dual matroid encodes four pairs of points $15,26,37,48$
that span  concurrent lines.
The realization space $\mathcal{R}(M)$ is $4$-dimensional, very affine, and given in local coordinates by
$$ X \,\, = \,\, \begin{small} \begin{pmatrix}
1 & 0 & 0 & 0 & a & 1 & 1 & 1 \\
0 & 1 & 0 & 0 & 1 & b & 1 & 1 \\
0 & 0 & 1 & 0 & 1 & 1 & c & 1 \\
0 & 0 & 0 & 1 & 1 & 1 & 1 & d \\
\end{pmatrix}. \end{small}
$$
The self-dual realization $\mathcal{S}(M)$ is a divisor in $\mathcal{R}(M)$.
It is the very affine threefold given by
\begin{equation}
\label{eq:vamoslocal} 2 a b c d \,-\, abc - abd - acd-bcd\,+\,a+b+c+d \,-\,2 \,\, = \,\, 0 .
\end{equation}

We compute equations in the $64$ Pl\"ucker coordinates $p_{ijkl}$ with the
algorithm in Proposition~\ref{prop:toricideal}.
The toric ideal $\mathcal{I}_M$ is minimally generated by
$636$ quadratic binomials. Substituting $p^*_{i_1i_2i_3i_4} /p_{i_1 i_2i_3i_4}$
for its unknowns, and clearing denominators, we obtain $312$ distinct
quartic binomials. Only four
are linearly independent modulo the $721$ Pl\"ucker quadrics, which are
found by setting  $p_{1256} = p_{1357}= \cdots = p_{3478} = 0$ in those for ${\rm Gr}(4,8)$. So, $\mathcal{S}(M)$ is cut out by
\begin{equation}
\label{eq:vamosglobal}
\begin{matrix} p_{1234} p_{1356} p_{2578} p_{4678} - p_{1235} p_{1346} p_{2478} p_{5678},\,\,
 p_{1234} p_{1257} p_{3568} p_{4678}-p_{1235}p_{1247} p_{3468} p_{5678},\, \\
 p_{1234} p_{1456} p_{2578} p_{3678} - p_{1245} p_{1346} p_{2378} p_{5678} , \,\,
 p_{1234} p_{1267} p_{3568} p_{4578}-p_{1236} p_{1247} p_{3458} p_{5678}, \end{matrix}
 \end{equation}
inside $\mathcal{R}(M)$. The four Pl\"ucker quartics
  in (\ref{eq:vamosglobal}) are equivalent to the local equation in (\ref{eq:vamoslocal}).
\end{example}

\section{Small self-dual matroids}
\label{sec3}

The aim of this section is to tabulate all small self-dual matroids and compute their associated data.
We restrict to simple matroids, i.e.~loops or parallel elements
are not allowed. Simple matroids of rank $n$ correspond to configurations of
$2n$ distinct points in $\PP^{n-1}$. Recall that a rank $n$ matroid $M$
on $[2n]= \{1,2,\ldots,2n\}$ is self-dual if $M = M^*$. This means that an $n$-set $I \subset [2n]$ is a basis of $M$ if and only if $[2n] \backslash I$ is a basis of $M$.  
In \cite{Lind, Ox, Per} this property is called identically self-dual.
  We begin this section with a combinatorial result.

\begin{proposition} \label{prop:stable}
  For a self-dual matroid $M$ of rank $n$, the following four are equivalent: \vspace{-0.1cm}
  \begin{enumerate} 
  \item[(1)] The matroid $M$ is connected. \vspace{-0.2cm}
\item[(2)]  The matroid polytope $P_M$ has the maximal dimension $2n-1$. \vspace{-0.2cm}
\item[(3)] The point $\frac{1}{2} \sum_{i=1}^{2n} e_i$ is in the interior
of $P_M$, when taken in $\bigl\{ \sum_{i=1}^{2n} x_i = n \bigr\} \simeq \RR^{2n-1}$. \vspace{-0.2cm}
\item[(4)] Every proper subset $A$ of $\,[2n]$ satisfies $\,{\rm rank}_M(A) > |A|/2$. \vspace{-0.1cm}
  \end{enumerate}
The self-dual matroid $M$ is called {\em stable} if any and hence all of these conditions are met.
\end{proposition}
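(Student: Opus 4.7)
The plan leverages the central symmetry of $P_M$ induced by self-duality. Since $I$ is a basis of $M$ iff $[2n]\setminus I$ is, the affine involution $x \mapsto \mathbf{1}-x$ permutes the vertices $\{e_I\}$ of $P_M$, so $P_M$ is symmetric about the point $c = \tfrac{1}{2}\sum_{i=1}^{2n} e_i$, and $c = \tfrac{1}{2}(e_I + e_{[2n]\setminus I}) \in P_M$ for any basis $I$. With this in hand, $(2)\Leftrightarrow(3)$ is nearly immediate: an interior point forces full dimension, and conversely if $c$ lay on $\partial P_M$, a supporting hyperplane $H$ through $c$ inside the ambient $\{\sum x_i = n\}$ would, under the central involution, sandwich $P_M$ into $H$, contradicting $\dim P_M = 2n-1$.

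For $(1)\Leftrightarrow(2)$, I would invoke the classical Edmonds dimension formula $\dim P_M = 2n - c(M)$, where $c(M)$ is the number of connected components of $M$; see for instance \cite[\S 13.2]{INLA} or \cite{Ox}. Thus $P_M$ attains the maximal dimension $2n-1$ exactly when $c(M)=1$.

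The substantive step is $(1)\Leftrightarrow(4)$. The dual rank identity ${\rm rank}_{M^*}(A) = |A| - n + {\rm rank}_M([2n]\setminus A)$, combined with $M = M^*$, gives
\[
{\rm rank}_M(A) + n \;=\; |A| + {\rm rank}_M([2n]\setminus A).
\]
Matroid submodularity yields ${\rm rank}_M(A) + {\rm rank}_M([2n]\setminus A) \geq {\rm rank}_M([2n]) + {\rm rank}_M(\emptyset) = n$, which upon substitution collapses to $2\,{\rm rank}_M(A) \geq |A|$ for every $A \subseteq [2n]$. Equality for some nonempty proper $A$ occurs iff $A$ is a separator of $M$, equivalently $M = M|_A \oplus M|_{[2n]\setminus A}$, i.e.\ exactly the failure of connectivity. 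Hence the strict form of (4) is equivalent to (1). The remainder is bookkeeping; the only subtlety is to read (4) with the tacit convention that $A$ is nonempty, since ${\rm rank}_M(\emptyset) = 0 = |\emptyset|/2$ trivially.
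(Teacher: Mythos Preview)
Your proof is correct. The chief difference from the paper lies in how condition (4) is linked to the others. The paper routes through the polytope: it proves $(3)\Leftrightarrow(4)$ by invoking the inequality description
\[
P_M = \Bigl\{ x : \sum_i x_i = n,\ \sum_{i\in A} x_i \le {\rm rank}_M(A)\ \text{for all}\ A\Bigr\},
\]
so that strict inequality at $c=\tfrac{1}{2}\mathbf{1}$ for every proper $A$ is literally the interior condition. You instead prove $(1)\Leftrightarrow(4)$ directly from the dual-rank identity and the separator criterion, bypassing the H-representation entirely. Your route is more self-contained combinatorially and makes the role of self-duality in (4) transparent via the identity ${\rm rank}_M(A)+n=|A|+{\rm rank}_M([2n]\setminus A)$; the paper's route is more geometric and explains \emph{why} (4) is exactly the interior condition. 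For $(2)\Rightarrow(3)$ both arguments exploit the central symmetry $x\mapsto \mathbf{1}-x$: the paper observes that the vertex barycenter equals $c$ and lies in the relative interior, while your supporting-hyperplane argument reaches the same conclusion. Your caveat that (4) must be read for nonempty $A$ is well taken and applies to the paper's formulation as well.
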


\begin{proof}
Conditions (1) and (2) are equivalent for all matroids, by \cite[Proposition 2.4]{FS}.
Clearly, (3) implies (2). The implication from (2) to (3) requires self-duality:
the vertex set of $P_M$ is closed under self-duality, so its average,
which lies in ${\rm int}(P_M)$, equals
$\frac{1}{2} \sum_{i=1}^{2n} e_i$. 
The equivalence of (3) and (4) uses the inequality representation of the matroid polytope
$$ P_M \,\,\,= \,\,\, \biggl\{ x \in \RR^{2n} \,\,:\,\,
\sum_{i=1}^{2n} x_i = n \,\,\, {\rm and} \,\, \sum_{i \in A} x_i \leq {\rm rank}_M(A) \,\,\,
\hbox{for all $A \subset [2n]$} \biggr\}. $$
See \cite[Proposition 2.3]{FS}. It suffices to let $A$ run over the flats of $M$.
Condition (4) says that all inequalities hold strictly at
$\frac{1}{2} \sum_{i=1}^{2n} e_i$. This means the point is in the interior of $P_M$.
\end{proof}

Geometrically, stability
  means that  the points are distinct, no four points lie on a line, no six points lie on
a plane, and so on.  Dolgachev and Ortland  \cite[Remark 3, page 47]{DO}
state that ``the assumption of stability ... is essential'' for their results on self-dual configurations.
It therefore makes sense to exclude matroids that are not stable.  If $X$ is a configuration of $2n$ distinct points that is stable, then $X$ is self-dual if and only if $X$ is Gorenstein;
see \cite[Theorem~7.3]{EP} and \cite[Proposition~1.1]{EP2}. We will 
mostly restrict to stable self-dual matroids.

\begin{example}[$n=3$]
Up to isomorphism, there are
two simple self-dual matroids of~rank~$3$. They are both stable.
The first is the uniform matroid $M_1$, with no non-basis.
Its realization spaces satisfy $\mathcal{S}(M_1) \subsetneq \mathcal{R}(M_1)$.
The second matroid $M_2$ has two non-bases, say
$123$ and $456 $. It satisfies $\mathcal{S}(M_2) = \mathcal{R}(M_2)$.
This is the very affine surface 
 in Examples~\ref{ex:36two}~and~\ref{ex:selfdual36}.
\end{example}

Perrot  \cite[Figure 2.4]{Per} listed $12$ self-dual matroids of rank $4$  with realizations. Starting from his results and using the collection in \texttt{polyDB} \cite{Paf}, we complete the analysis of rank $4$.

\begin{proposition} \label{prop:rank4}
Up to isomorphism, there are precisely $13$ simple self-dual matroids of rank $4$, of which $12$ are stable. The complete data of their realization spaces and self-dual realization spaces is collected 
in our {\tt MathRepo} page
 \cite{MathRepo}, and it is summarized in Table \ref{tab:rank4}. 
\end{proposition}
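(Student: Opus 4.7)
The plan is to turn the statement into a finite enumeration backed by the combinatorial and algebraic tools already developed in the paper. The first step is to list all simple matroids of rank $4$ on the ground set $[8]$, which is a finite (and tabulated) collection. Rather than generate this list from scratch, I would pull it from the database \texttt{polyDB} \cite{Paf} that the statement already cites, since the classification of simple matroids on up to eight elements is known and machine-readable. From this list I would retain only those matroids $M$ whose basis collection is closed under the complementation $I \mapsto [8]\setminus I$; this is a direct combinatorial test and gives exactly the simple self-dual matroids of rank $4$ on $[8]$. Quotienting by the action of the symmetric group $S_8$ (using any standard canonical-form routine for matroids) collapses this list to isomorphism classes; the claim is that $13$ classes survive.

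Next I would distinguish the stable matroids using Proposition~\ref{prop:stable}. For each of the $13$ candidates it is enough to check condition~$(4)$: for every proper subset $A \subsetneq [8]$, verify the strict inequality $\mathrm{rank}_M(A) > |A|/2$. By the proposition one may equivalently restrict $A$ to flats of $M$, cutting the check down to a handful of inequalities per matroid. The $12$ matroids catalogued by Perrot \cite{Per} should all pass, and the one remaining matroid in our enumeration should be the unique self-dual rank $4$ matroid that fails stability (typically witnessed by a $4$-point line or a coloop-like configuration). This is the step where I would also cross-check against Perrot's \cite[Figure~2.4]{Per} to confirm consistency and to locate the extra matroid.

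The last step is to fill in Table~\ref{tab:rank4}, i.e.\ to compute the realization space $\mathcal{R}(M)$ and the self-dual realization space $\mathcal{S}(M)$ for each $M$. For $\mathcal{R}(M)$ I would parametrize matrices as in~(\ref{eq:bymatrices}) and impose the vanishing of the Pl\"ucker coordinates indexed by non-bases together with the non-vanishing of those indexed by bases; this yields an explicit very affine variety whose dimension can be read off by a Gr\"obner basis computation in \texttt{Macaulay2}~\cite{M2}. For $\mathcal{S}(M)$ I would invoke Proposition~\ref{prop:toricideal}: take a set of quadratic binomial generators of the toric ideal $\mathcal{I}_M$, substitute $p^*_I/p_I$ for each basis variable, clear denominators, and intersect the resulting quartics with $\mathcal{R}(M)$, exactly as done in Example~\ref{ex:nonvamos}. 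The output is the dimension, and whether the inclusion $\mathcal{S}(M) \subseteq \mathcal{R}(M)$ is strict or an equality.

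The main obstacle is not the combinatorial enumeration, which is routine once the matroid database is in hand, but the algebraic computation of $\mathcal{S}(M)$: the toric ideal $\mathcal{I}_M$ can have hundreds of binomial generators (636 in the non-V\'amos case), and reducing the resulting quartics modulo the Pl\"ucker relations for $\mathrm{Gr}(4,8)$ to a minimal generating set is what carries most of the computational cost. I would mitigate this by working in the affine chart~(\ref{eq:bymatrices}), where the self-duality condition becomes a small system of polynomial equations in at most nine variables (as in~(\ref{eq:vamoslocal})), and then only lift back to global Pl\"ucker coordinates when needed for presentation in the table.
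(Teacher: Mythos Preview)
Your proposal is correct and tracks the paper's argument closely: both pull the rank~$4$ matroids on $[8]$ from \texttt{polyDB}, filter for self-duality and isomorphism, and compute $\mathcal{R}(M)$ in an affine chart by imposing nonbasis minors, normalizing entries, and saturating by basis minors. The one substantive difference is in how $\mathcal{S}(M)$ is computed. Your primary plan is the global route via Proposition~\ref{prop:toricideal}: compute the toric ideal $\mathcal{I}_M$, substitute $p^*_I/p_I$, and reduce the resulting quartics modulo the Pl\"ucker relations of ${\rm Gr}(4,8)$. The paper instead stays entirely in the affine chart: it adjoins the $\binom{5}{2}$ entries of $X\Lambda X^T$ as extra equations in the variables $x_{ij},\lambda_1,\ldots,\lambda_8$, saturates by $\prod_i \lambda_i$, and then eliminates the $\lambda_i$ to obtain $I_{\mathcal{S}(M)}$ directly. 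This sidesteps the toric ideal altogether and avoids the large reduction you flag as the main obstacle; it is essentially the ``mitigation'' you mention at the end, promoted to the primary method. Both approaches are valid and yield the same varieties, but the elimination-of-$\lambda$ route is what makes all $13$ cases tractable by straightforward Gr\"obner basis computation.
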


\begin{table}[h]
  \begin{tabular}{llcc}
    Label & nonbases not containing $8$ & $\!\!\!\!\dim(\mathcal{R}(M))$ & $\dim(\mathcal{S}(M))$\\
    \hline
    
   \texttt{4.0.a} & \{ \}  & 9&6
   \\
    \texttt{4.2.a} &  \{1234\} & 7&5  \\
    \texttt{4.4.a} & \{1234, 1257\} & 5&4 \\
    \texttt{4.6.a} & \{1234, 1257, 2467\}  &3 &3\\
    \texttt{4.6.b} & \{1256, 1357, 2367\} & 4&  3\\
    \texttt{4.8.a} & \{1247, 1357, 2367, 4567\}  & 2&2 \\
    \texttt{4.8.b} & \{1267, 1347, 1356, 4567\}  & 2&2 \\
    \texttt{4.10.a} & \{1234, 1257,  2356, 2467, 3457\}  & 2&2  \\
    \texttt{4.10.b} &  \{1257, 1346, 2346, 3456, 3467\}
 & 5&4\\
    \texttt{4.12.a} &\{1267, 1357, 1456, 2356, 2457, 3467\}  & 1&1   \\
    \texttt{4.14.a}  &\{1234, 1257, 1367, 1456, 2356, 2467, 3457\}
   & $-1$ & $-1$  \\
    \texttt{4.16.a} & \{1234, 1235, 1236, 1237, 1245, 1345, 2345, 4567\} & 3&3\\
    \texttt{4.34.a} & 
    \{1234, 1235, 1236, 1237, 1245, 1246, 1247, 1345, & 2 & 2\\ &
\,\,\,1346, 1347, 1567, 2345, 2346, 2347, 2567, 3567, 4567\} \!\!
  \end{tabular}
\caption{The $13$ self-dual matroids of rank $4$. The matroid \texttt{4.34.a} is not stable.
The varieties $\overline{\mathcal{R}(M)}$ and $\overline{\mathcal{S}(M)}$  are equal when their dimensions agree.
}\label{tab:rank4}
\end{table}

\begin{proof}[Proof and discussion]
The self-dual matroids were extracted from the list of all rank $4$ matroids on $8$ elements stored in the database \texttt{polyDB} \cite{Paf}.
We give labels to each self-dual matroid in the following way: our label has the form $
\texttt{r.n.*}$ where {\tt r} is the rank of the matroid, {\tt n} is the number of nonbases, and $*$ is a letter of the alphabet, assigned arbitrarily to distinguish matroids with the same number of nonbases. For simplicity, 
 in Table \ref{tab:rank4}
 we list only the nonbases without the element $8$; self-duality recovers the rest of the~nonbases.

To find the realization space for each matroid $M$, we used Gr\"obner basis computations
in {\tt Magma}.
Note that $M$ is realizable if and only if it is realized by a $4 \times 8$ matrix 
of the form $ X = (\,\text{Id}_4 \,|\,  x_{ij} \,) $,
where $\text{Id}_4$ is the identity matrix.
Here we relabel so that $1234$ is a basis of $M$.
We now consider the ideal in $\QQ[x_{ij}]_{1 \leq i , j \leq 4}$ 
that is generated by all minors of $(x_{ij})$
that are indexed by the nonbases of $M$. This sets some of
the entries $x_{ij}$ to zero. We next consider the remaining nonzero entries $x_{kl}$
in the $4 \times 4$ matrix of unknowns, and we put $x_{kl}-1$ into the ideal
for the first such nonzero entry in each column and row.
We finally saturate this ideal by each minor that corresponds to a 
basis of $M$. The resulting ideal is denoted $I_{\mathcal{R}(M)}$.

For the self-dual realization space $\mathcal{S}(M)$, we add to  $I_{\mathcal{R}(M)}$ the constraints $X \cdot \Lambda \cdot X^T = 0$, where $\Lambda $ is a diagonal matrix with eight unknowns $\{\lambda_i\}_{1 \leq i \leq 8}$. We next saturate by $
\prod_i \lambda_i$, and then we eliminate 
$\lambda_1,\ldots,\lambda_8$.
The resulting ideal in  $\QQ[x_{ij}]_{1 \leq i , j \leq 4}$ 
is denoted $I_{\mathcal{S}(M)}$.
 Finally, we check whether $I_{\mathcal{S}(M)}$ is equal to  $I_{\mathcal{R}(M)}$.    If so, we are done and we can conclude  $\overline{\mathcal{S}(M)} = \overline{\mathcal{R}(M)}$. If not, we re-saturate by each basis minor to get the ideal  $I_{\mathcal{S}(M)}$.
 \end{proof}
 
 \begin{remark} \label{rmk:representveryaffine}
In the proof of Proposition \ref{prop:rank4} and in  Table \ref{tab:rank4},
  the very affine varieties 
$\mathcal{R}(M)$ and $ \mathcal{S}(M)$ are represented 
by their closures  $\overline{\mathcal{S}(M)}$ and $ \overline{\mathcal{R}(M)}$
in  $16$-dimensional affine space. These closures are the varieties of the ideals
 $I_{\mathcal{R}(M)}$ and  $I_{\mathcal{S}(M)}$ in the polynomial ring
$\QQ[x_{ij}]_{1 \leq i , j \leq 4}$. These varieties are 
correct closures because of the saturations we performed.
We emphasize that complete encodings of the realization spaces
$\mathcal{R}(M)$ and $\mathcal{S}(M)$ require more data
than these ideals. They also require the inequations arising from bases of $M$, and $\mathcal{S}(M)$ keeps track of the inequations from requiring the entries of $\Lambda$ to be nonzero in \eqref{eq:XDX}.
\end{remark}

Some of the matroids in Table \ref{tab:rank4} are familiar.
Type \texttt{4.10.a} is the matroid given by the vertices of
a regular $3$-cube.
Type \texttt{4.6.b} is the non-V\'amos matroid in Example~\ref{ex:nonvamos}. The matroid \texttt{4.14.a} contains the Fano plane, which is why it is not realizable in characteristic zero.
Type \texttt{4.34.a} is the matroid associated with
 two quadruples ($1234$ and $5678$) of collinear points in $3$-space. 
 This violates condition (1) in Proposition \ref{prop:stable},
     so the matroid is not stable. 

Turning to the title of this paper, we now ask 
 for realizations of these matroids 
as hyperplane sections of genus $5$ canonical curves.
We exclude the non-realizable matroid {\tt 4.14.a} and the non-stable matroid {\tt 4.34.a}.
For the other $11$ matroids, we have the following result.

\begin{proposition} \label{prop:9of12}
Precisely $\,9$ of the $11$ realizable and stable matroids in Table \ref{tab:rank4}
arise from hyperplane sections of smooth complete intersections
of three quadrics in $\PP^4$.
The other two matroids, which are labeled \texttt{4.10.b} and \texttt{4.16.a},  arise 
from smooth trigonal canonical curves. 
\end{proposition}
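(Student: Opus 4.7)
The plan rests on the classification of canonical curves of genus~$5$. By Petri's theorem, every smooth non-hyperelliptic curve of genus~$5$ is canonically embedded in $\PP^4$ as either (a) a complete intersection of three quadrics, or (b) a trigonal curve lying on a rational normal cubic scroll $S\subset\PP^4$. These two cases are mutually exclusive: if a complete intersection of three quadrics contained $S$, then the three quadrics would all contain the surface $S$, making their common zero locus at least two-dimensional rather than a curve. Correspondingly, a hyperplane section $X=C\cap H$ with $H\simeq\PP^3$ is either (a) a complete intersection of three quadrics in $\PP^3$, obtained as the restrictions of the three quadrics defining $C$, or (b) a subset of the hyperplane section of $S$, which is a rational cubic curve in $\PP^3$ (a twisted cubic in general, or a reducible cubic containing a line when $H$ contains a ruling of $S$, namely a trisecant of $C$).

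For each of the eleven realizable and stable matroids $M$ in Table~\ref{tab:rank4}, we take a self-dual configuration $X\subset\PP^3$ from the parametrization of $\mathcal{S}(M)$ built in Proposition~\ref{prop:rank4}, and we compute the minimal free resolution of $I_X$ in \texttt{Macaulay2}. For nine of these matroids the resolution is the Koszul complex on three quadrics, and so $X$ is itself a complete intersection. A genus-$5$ analog of Algorithm~\ref{alg:genus6}, following \cite{Bath}, then lifts the three quadric generators from $\QQ[y_0,\ldots,y_3]$ to quadrics $\tilde Q_1,\tilde Q_2,\tilde Q_3\in\QQ[x_0,\ldots,x_4]$ by appending monomials that involve $x_4$, producing a canonical curve $C$ with $C\cap\{x_4=0\}=X$. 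Smoothness of $C$ is certified by the Jacobian rank criterion, and the resulting $C$ is a smooth complete intersection canonical curve realizing~$M$.

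For the remaining two matroids the key observation is that each contains a dependent triple of elements, that is, three collinear points: the triple $\{3,4,6\}$ for {\tt 4.10.b}, and both $\{1,2,3\}$ and $\{6,7,8\}$ for {\tt 4.16.a}. Any quadric in $\PP^3$ passing through three collinear points must contain the entire line through them, so every quadric in $I_X$ contains the corresponding line $\ell$. Hence the scheme-theoretic intersection of any three quadrics through $X$ contains $\ell$ and is at least one-dimensional, ruling out $X$ as a complete intersection of three quadrics. We instead produce a smooth trigonal canonical curve realizing each of these two matroids: we lift the reducible cubic curve in $\PP^3$ containing $X$ to a cubic scroll $S\subset\PP^4$, and construct a smooth trigonal curve $C\subset S$ in an appropriate divisor class so that $C\cap\{x_4=0\}=X$.

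The main obstacle is certifying smoothness of the lifted curves. For the nine CI cases this amounts to choosing the lifting coefficients generically enough that the Jacobian of $\langle\tilde Q_1,\tilde Q_2,\tilde Q_3\rangle$ has maximal rank along $C$; for the two trigonal cases it requires care in picking the divisor class on $S$ and in handling those configurations in which the ambient cubic in $\PP^3$ is reducible, so that $S$ is no longer smooth and one must work on a normalization. All such checks are carried out symbolically in \texttt{Macaulay2}, and the explicit curves realizing each matroid are made available on our \texttt{MathRepo} page.
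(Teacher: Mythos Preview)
Your proof is correct and broadly parallel to the paper's, but with two noteworthy differences.

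For the nine complete-intersection matroids, both you and the paper proceed by explicit construction: the paper simply exhibits the nine curves in Table~\ref{tab:genus5}, while you compute the Koszul resolution and then lift the three quadric generators. These are essentially the same. (Your reference to a ``genus-$5$ analog of Algorithm~\ref{alg:genus6}, following \cite{Bath}'' is slightly misplaced: Bath's construction and Algorithm~\ref{alg:genus6} are specific to genus~$6$. In genus~$5$ the lifting is direct---just append $x_4$-terms to the three quadrics---and needs no analog of Bath's machinery.)

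For the two exceptional matroids, your argument that they cannot arise from a complete intersection is more conceptual than the paper's. You observe that \texttt{4.10.b} and \texttt{4.16.a} each contain a collinear triple in the matroid sense, forcing every quadric through $X$ to contain a line, so $X$ can never be a zero-dimensional complete intersection of three quadrics. This is a structural obstruction valid for \emph{every} realization of these matroids. The paper instead computes that a generic self-dual realization has $I_X$ minimally generated by three quadrics and two cubics, which certifies non-CI but leaves the underlying reason implicit.

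For the trigonal construction, the approaches diverge more substantially. The paper invokes the Buchsbaum--Eisenbud structure theorem: it writes the five generators of $I_X$ as the $4\times 4$ Pfaffians of a skew-symmetric $5\times 5$ matrix $\bigl[\begin{smallmatrix}0 & A\\ -A^T & B\end{smallmatrix}\bigr]$ with $A$ linear and $B$ quadratic, and lifts by adding random scalar multiples of the new variable to each matrix entry. This is a uniform algebraic recipe. Your scroll-based approach is more geometric but requires more case analysis---for instance, \texttt{4.16.a} has \emph{two} collinear triples and five coplanar points on each side, so the ambient cubic in $\PP^3$ is a chain of three lines rather than a line-plus-conic, and the lift to a smooth scroll is correspondingly more delicate. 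Both methods ultimately defer smoothness to a symbolic Jacobian check, so either is acceptable for a computer-assisted proof.
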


\begin{table}[h]
    \begin{center}
        \begin{tabular}{cl}
            Label & Three quadratic forms in five variables \\
            \hline
            \texttt{4.0.a} &     $  xy -w( 5y {-} 3 z) +v^2 ,$    $ 2xz + w(x  {-} 11y  {+} 6 z) + 2 v (x{+}y)
            ,$ $ yz +  w(7 x {-} 17y  {+} 8  z ) + v y$\\
            \hline
            \texttt{4.2.a} & $y(2x-3z)+w(y+4z) +v^2,$ $z(x+y)-w(21y-11z) +vy,$ $
            5w(x-2y+z)+vx$\\
            \hline
            \texttt{4.4.a}     &    $ y( x   - 3 z )+ w(3 y - z ) + v y,$
            $ 3z(x   - 2 y ) +w(4 y   - z  ) + v x,$
            $ w(x   - 2 y   + z  ) + v^2$\\
            \hline
            \texttt{4.6.a} &    $ y(x  - 2  z)- w ( y-  z  ) + 2v^2,$
            $ z(x   + 2 y  ) -w( y  +3z) + xv ,$ 
            $ w(2 x  - y   -3  z ) + 2xv $\\
            \hline
            \texttt{4.6.b} & $ y(3x-z)-w(11y-9z)$,
                $    z(2x-y)-w(8y-7z)+v^2$,
                $    w(x-3y+2z)+vx$    \\
            \hline
            \texttt{4.8.a} &    $ x y - z  w +vx,$
            $ 2x  z +    w (y- 3  z) + v^2,$
            $ y  z + w( 3  x  - 4  z ) - vw$     \\
            \hline
            \texttt{4.8.b} & $(x+z-2w)y+v^2,$ $(x-2y+z)w+(x+y)v$,  $2(x-y)z-3(y-z)w$\\
            \hline
            \texttt{4.10.a} &    $ y( x  -   z -   w) + v^2,$
            $ z(x   - y   -   w) + v x,$
            $ w(2x  -   y  + z ) + v z$    \\
            \hline
            \texttt{4.12.a} &    $ y (2x  - z -  w )+ v ,$
            $ z(2x   -   y  -   w) + v (w+x),$
            $  w(2 x - y   -   z) + v (z-y)$     \\
            \hline
        \end{tabular}
        \caption{Complete intersection curves give rise to nine of the self-dual matroids of rank $4$.}
        \label{tab:genus5}
    \end{center}
\end{table}

\begin{proof}
For the $9$ matroids from complete intersections, we list smooth canonical curves in Table~\ref{tab:genus5}. The matroid is realized by intersecting the curve with the hyperplane 
$ \{v {=} 0\}$ in~$\PP^4$.

We now consider the remaining two matroids \texttt{4.10.b} and \texttt{4.16.a}.
These appear in Table~\ref{tab:rank4} but not in Table \ref{tab:genus5}. For each matroid, we pick
a generic self-dual realization $X$, and examine its
homogeneous radical ideal $I_X$  in $\QQ[x,y,z,w]$. It turns out that
this ideal is minimally generated by three quadrics and two cubics,
so it is not a complete intersection. 

We show that both configurations $X$ are hyperplane sections of smooth trigonal curves in $\PP^4$.
This is done as follows. The ideal $I_X$ is generated by three quadrics and two cubics in
$\QQ[x,y,z,w]$. We write these as
the $4 \times 4$ Pfaffians of a skew-symmetric $5 \times 5$ matrix
$ \begin{small} \begin{bmatrix} 0 & A \,\\ - A^T & B\, \end{bmatrix}\end{small}$,
where $A$ is a $2 \times 3$ matrix with linear entries and
$B$ is a skew-symmetric $3 \times 3$ matrix with quadratic entries.
By adding random scalar multiples of the new variable $v$ to each matrix
entry, we obtain a Pfaffian ideal that defines a smooth
trigonal curve in $\PP^4$.
\end{proof}

\begin{remark} \label{rem:dreifuenf}
The four matroids  \texttt{4.4.a}, \texttt{4.6.a}, \texttt{4.10.b} and \texttt{4.16.a} also arise from hyperplane
sections of reducible canonical curves. 
The curves consist of
eight lines in $\PP^4$, where each line meets three other lines.
Such canonical  curves are known as {\em graph curves}, and we study them in detail in Section \ref{sec5}.
These curves are determined by their dual graphs. They are trivalent graphs with eight vertices and they determine the matroid arising from the hyperplane section of the curve.
The graphs for our four
matroids are shown in Figure~\ref{fig:graphs8}.
\end{remark}

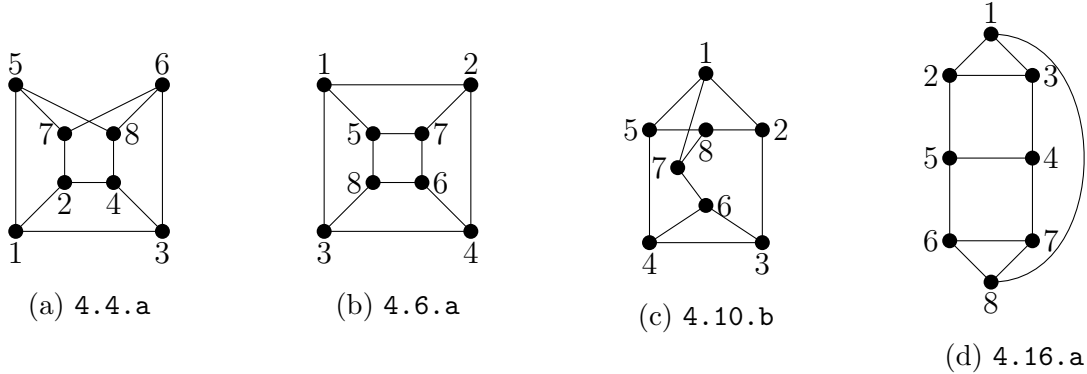
\begin{figure}[h]  
    \begin{subfigure}{.24\textwidth}\centering
    \begin{tikzpicture}[scale = 0.65]
      \coordinate (1) at (0,0);
      \coordinate (2) at (1,1);
      \coordinate (3) at (3,0);
      \coordinate (4) at (2,1);
      \coordinate (5) at (0,3);
      \coordinate (6) at (3,3);
      \coordinate (7) at (1,2);
      \coordinate (8) at (2,2);
      \foreach \i in {1,2,3,4,5,6,7,8} {
        \path (\i) node[circle, black, fill, inner sep=2]{};
      }
      \draw[black] (1) -- (3) -- (4) -- (2) -- (1) -- (5) -- (7) -- (6) -- (8) -- (5);
      \draw[black] (4) -- (8) ;
      \draw[black] (6) -- (3) ;
      \draw[black] (2) -- (7) ;
      \node[below] at (1) {1};
      \node[below] at (3) {3};
      \node[below] at (2) {2};
      \node[below] at (4) {4};
      \node[above] at (5) {5};
      \node[above] at (6) {6};
      \node[right] at (8) {8};
      \node[left] at (7) {7};
    \end{tikzpicture}
    \caption{\texttt{4.4.a}}
  \end{subfigure}
  \begin{subfigure}{.24\textwidth}\centering
    \begin{tikzpicture}[scale = 0.65]
      \coordinate (3) at (0,0);
      \coordinate (8) at (1,1);
      \coordinate (4) at (3,0);
      \coordinate (6) at (2,1);
      \coordinate (1) at (0,3);
      \coordinate (2) at (3,3);
      \coordinate (5) at (1,2);
      \coordinate (7) at (2,2);
      \foreach \i in {1,2,3,4,5,6,7,8} {
        \path (\i) node[circle, black, fill, inner sep=2]{};
      }
      \draw[black] (1) -- (2) -- (4) -- (3) -- (1) -- (5) -- (7) -- (6) -- (8) -- (5);
      \draw[black] (3) -- (8) ;
      \draw[black] (6) -- (4) ;
      \draw[black] (2) -- (7) ;
      \node[above] at (1) {1};
      \node[below] at (3) {3};
      \node[above] at (2) {2};
      \node[below] at (4) {4};
      \node[left] at (5) {5};
      \node[right] at (6) {6};
      \node[left] at (8) {8};
      \node[right] at (7) {7};
    \end{tikzpicture}
    \caption{\texttt{4.6.a}}
  \end{subfigure}
  \begin{subfigure}{.24\textwidth}\centering
    \begin{tikzpicture}[scale = 0.75]
      \coordinate (4) at (0,0);
      \coordinate (3) at (2,0);
      \coordinate (6) at (1,0.66);
      \coordinate (7) at (0.5,1.33);
      \coordinate (8) at (1,2);
      \coordinate (5) at (0,2);
      \coordinate (2) at (2,2);
      \coordinate (1) at (1,3);
      \foreach \i in {1,2,3,4,5,6,7,8} {
        \path (\i) node[circle, black, fill, inner sep=2]{};
      }
      \draw[black] (5)--(1) -- (2) -- (3) -- (4) -- (5) -- (8) -- (7) -- (6) -- (4);
      \draw[black] (3) -- (6) ;
      \draw[black] (2) -- (8) ;
      \draw[black] (1) -- (7) ;
      \node[above] at (1) {1};
      \node[below] at (3) {3};
      \node[right] at (2) {2};
      \node[below] at (4) {4};
      \node[left] at (5) {5};
      \node[right] at (6) {6};
      \node[below] at (8) {8};
      \node[left] at (7) {7};
    \end{tikzpicture}
    \caption{\texttt{4.10.b}}
  \end{subfigure}
  \begin{subfigure}{.24\textwidth}\centering
    \begin{tikzpicture}[scale = 0.55]
      \coordinate (6) at (0,0);
      \coordinate (7) at (2,0);
      \coordinate (4) at (2,2);
      \coordinate (5) at (0,2);
      \coordinate (2) at (0,4);
      \coordinate (3) at (2,4);
      \coordinate (1) at (1,5);
      \coordinate (8) at (1,-1);
      \foreach \i in {1,2,3,4,5,6,7,8} {
        \path (\i) node[circle, black, fill, inner sep=2]{};
      }
      \draw[black] (1) -- (2) -- (3) -- (4) -- (5) -- (6) -- (7) -- (8) -- (6);
      \draw[black] (3) -- (1);
      \draw[black] (2) -- (5) ;
      \draw[black] (4) -- (7) ;
      \draw[black] (1) to[out=0, in=0, distance = 3cm] (8) ;
      \node[above] at (1) {1};
      \node[right] at (3) {3};
      \node[left] at (2) {2};
      \node[right] at (4) {4};
      \node[left] at (5) {5};
      \node[left] at (6) {6};
      \node[below] at (8) {8};
      \node[right] at (7) {7};
    \end{tikzpicture}
    \caption{\texttt{4.16.a}}
  \end{subfigure}                                                
        \caption{Four trivalent graphs on eight vertices.
         Hyperplane sections
        of the corresponding graph curves in $\PP^4$ give
        self-dual configurations that realize
        four of the matroids in Table \ref{tab:rank4}.
                        \label{fig:graphs8}                                        }
\end{figure}

We now present the main result in this section: the classification of all
 simple self-dual matroids of  rank $5$. 
 Their realization spaces were represented algebraically as in
 Remark \ref{rmk:representveryaffine}.

\begin{theorem} \label{thm:1042} Up to isomorphism, there are $1042$
 simple self-dual matroids $M$ of rank~$5$. The dimension of their realization spaces is given in \eqref{eq:dimR(M)}.
Of these $1042$ self-dual matroids, precisely $346$ have $\mathcal{R}(M) = \emptyset$. 
At least three matroids $M$  have $\,\mathcal{R}(M) \not= \emptyset\,$ but $\,\mathcal{S}(M) = \emptyset$.
\end{theorem}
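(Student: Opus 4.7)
The statement is a computer-assisted classification, so the plan is to execute four steps and to cross-check them with independent sanity tests. First I would enumerate all isomorphism classes of simple rank-$5$ matroids on $[10]$, either by direct \texttt{nauty}-based generation or by extraction from the \texttt{polyDB} database \cite{Paf}, just as was done for $n=4$ in Proposition \ref{prop:rank4}. From this list I would select the matroids $M$ satisfying the combinatorial condition $M = M^*$, namely that a $5$-subset $I \subset [10]$ is a basis of $M$ if and only if $[10]\setminus I$ is. This test on the basis collection yields the claimed count of $1042$, which can be independently cross-checked against the inventory of Perrot \cite{Per}.

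Second, for each of the $1042$ matroids I would compute the realization ideal $I_{\mathcal{R}(M)}$ exactly as in the proof of Proposition \ref{prop:rank4}: pick a basis $B$, normalize the $5\times 10$ representing matrix $X$ to carry $\mathrm{Id}_5$ on the columns of $B$, set to zero the $5\times 5$ minors indexed by non-bases of $M$, fix one nonzero entry per remaining row and column to $1$ to kill the residual torus ambiguity, and saturate by the minors indexed by bases of $M$. The saturated ideal lies in $\mathbb{Q}[x_{ij}]$ on at most $25$ variables; it is the unit ideal precisely when $M$ is non-realizable over $\mathbb{C}$, producing the count $346$, and otherwise its Krull dimension is $\dim \mathcal{R}(M)$. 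Tabulating these dimensions produces the multiset in \eqref{eq:dimR(M)}.

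Third, for each of the $696$ realizable matroids I would construct $I_{\mathcal{S}(M)}$ by adjoining the $\binom{6}{2} = 15$ quadratic entries of $X \Lambda X^T$ with $\Lambda = \mathrm{diag}(\lambda_1,\ldots,\lambda_{10})$, saturating by $\prod_i \lambda_i$, and eliminating the $\lambda_i$ to obtain an ideal in $\mathbb{Q}[x_{ij}]$. Comparing with $I_{\mathcal{R}(M)}$ decides whether $\mathcal{S}(M) = \mathcal{R}(M)$, and detecting $\mathcal{S}(M) = \emptyset$ amounts to checking that the saturated ideal becomes the unit ideal. A search through these $696$ candidates then exhibits the three (or more) matroids for which $\mathcal{R}(M) \neq \emptyset$ but $\mathcal{S}(M) = \emptyset$, establishing the final claim.

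The principal obstacle is scale rather than mathematical content: a Gr\"obner-basis elimination in up to $35$ variables must be carried out for over a thousand matroids. To make this feasible I would exploit the automorphism group of each $M$ to minimize the number of free parameters before normalization, run the computations in parallel in \texttt{Macaulay2} and \texttt{Magma}, and for the most expensive cases certify dimensions first in a finite characteristic before lifting to $\mathbb{Q}$. Two independent checks would secure the output: sampling random $R \in \mathrm{SO}(5)$ via Algorithm \ref{alg:SOn} yields concrete self-dual realizations whose matroids must appear in the table with $\mathcal{S}(M)\neq \emptyset$; and the upper bound $\dim \mathcal{R}(M) \le (n-1)^2 = 16$ from \eqref{eq:Rn2n} is saturated only by the uniform matroid, matching the single entry $16^1$ in the dimension multiset.
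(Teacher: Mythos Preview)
Your plan matches the paper's approach in its broad strokes---the same normalization, ideal construction, saturation, and elimination pipeline as in Proposition~\ref{prop:rank4}. Two points of divergence are worth flagging.

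First, the enumeration step as you describe it is not directly feasible: \texttt{polyDB} does not contain the full list of simple rank-$5$ matroids on $10$ elements. The paper sidesteps this by starting from the complete list of rank-$5$ matroids on \emph{nine} elements (which is in \texttt{polyDB}), then for each such matroid viewing its bases as subsets of $[10]$ and closing the basis collection under complementation $I \mapsto [10]\setminus I$. If the result satisfies the basis-exchange axiom, it is a self-dual matroid on $[10]$; every self-dual matroid arises this way via deletion of the element $10$. This is the key trick that makes the enumeration tractable, and it is absent from your plan.

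Second, you underestimate the computational resistance. The paper reports that for $9$ of the $1042$ matroids the Gr\"obner basis for $I_{\mathcal{R}(M)}$ does not terminate, even after an optimization you do not mention: looping over all \emph{frames} (choices of basis, distinguished all-nonzero column, and row/column normalizers) to minimize the product of degrees of the initial generators. For those $9$ cases the dimension is obtained heuristically by intersecting with $d$ random hyperplanes and checking for which $d$ the sliced ideal becomes zero-dimensional. Likewise, the self-dual ideal $I_{\mathcal{S}(M)}$ is fully computed for only $407$ of the $696$ realizable matroids; the claim ``at least three'' with $\mathcal{S}(M)=\emptyset$ reflects this incompleteness rather than a sharp count. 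Your proposed finite-characteristic certification and automorphism reduction are reasonable, but the paper's experience suggests they will not suffice to close every case.
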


More comprehensive data, including Gr\"obner bases for  the ideals $I_{\mathcal{R}(M)}$
and $I_{\mathcal{S}(M)}$ and explicit
 self-dual realizations (whenever they exist) of the
  matroids $M$, can be found at \cite{MathRepo}.

\begin{proof}[Proof and discussion]
We computed a database of all rank $5$ self-dual matroids using {\tt OSCAR} in {\tt Julia} \cite{julia,oscar}.
Each matroid has the ground set $[10] = \{1, \dots, 10\}$.
For the construction of the matroids, we started from the complete list of rank $5$ matroids on $9$ elements from the database  {\tt polyDB} \cite{Paf}.
For each matroid in this list, we took the set of bases of the matroid, and closed this set under complements, considering each basis as a subset of $[10]$. If the resulting set of bases determined a matroid, and that matroid was not already isomorphic to a matroid in our database, we added it to the database. 
 This resulted in $1042$  matroids.
 
 We found the numbers  of nonbases for the $1042$ self-dual matroids to be as follows:
\begin{equation}
\label{eq:rank5nonbases}  \begin{matrix}
0^1, 2^1,\, 4^1, \, 6^3, \,8^7, \,10^{21},\, 12^{62}, \,14^{140}, \, 16^{208}, 
18^{182}, \, 20^{77}, \,22^{47},\, 24^{70},\, 26^{57}, \,\\ 28^{41}, \,
 \,30^{20}, \, 32^{20},\, 34^9, \, 
36^{14},\, 38^6,\, 40^7,\, 42^6,\, 44^9,\, 46^5,\, 48^6,\, 50^1,\, 
 52^1, \, \\ 54^1, \, 60^ 3,\, 62^1,\, 
64^1,\, 66^3,\, 68^2,\, 70^1,\, 72^3,\, 78^1,\, 84^1,\, 90^2,\, 132^1.
\end{matrix}
\end{equation}
The exponents indicate the number of matroids with that number of nonbases.

To find the realization spaces, we applied the  method from the proof
of Proposition~\ref{prop:rank4}. For all but $9$ of the matroids, we  succeeded in computing
a Gr\"obner basis for the ideal $I_{\mathcal{R}(M)}$ 
of the variety $\overline{\mathcal{R}(M)}$. This was done with
{\tt Magma} code parallelized with \texttt{GNU Parallel}~\cite{GNUparallel} on a server with four 12-Core Intel Xeon E7-8857 v2 at 3.0 GHz
and 1024 GB RAM. For the remaining $9$, we heuristically determined the dimension 
of the realization space $\mathcal{R}(M)$.

Because the Gr\"{o}bner basis computation is bigger for the rank $5$ case, we add one more optimization step compared to rank $4$. This step is possible for all but two matroids.
To compute the ideal  $I_{\mathcal{R}(M)}$ 
 for a matroid $M$, we construct a $5 \times 10$ matrix
$ X_M := (\,\text{Id}_5 \,| \, x_{ij} \,) $ by relabeling $M$. This choice of relabeling impacts the runtime of the calculations; we now describe our choice.
By a \emph{frame} for $M$, we mean a choice of the following three things:
\begin{enumerate}
  \item the matrix $X_M$, given by a choice of basis $b$ of $M$ and its complement. We  label the first five columns by the ordered basis  $b$ and the last five by the ordered complement.
  \item a column $j_0$ such that, under this labeling, we have $x_{ij_0} \neq 0$ for all $i$.
  \item nonzero elements $x_{j_1, 1}, \dots, x_{j_5, 5}$ (this scales each point projectively).
\end{enumerate} 
Each nonbasis of $M$ corresponds to a minor of $X_M$, by taking the columns in the nonbasis.
We start with the ideal $I'$ of
 nonbasis minors under the above choice of labeling, augmented by $x_{ij_0} -1$ and $x_{j_i, i} -1 $ for $1 \leq i \leq 5$. 
To make the comptutations feasible, we looped over all choices of frames for $M$ to minimize the product of the degrees of the 
generators of $I'$.  We then applied the methods in the proof of Proposition~\ref{prop:rank4}
for computing $I_{\mathcal{R}(M)}$ and $I_{\mathcal{S}(M)}$.

For the nine cases where we cannot compute 
the ideal $I_{\mathcal{R}(M)}$,
 we applied a heuristic to certify that the
very affine variety $\mathcal{R}(M)$ is non-empty, and to  determine its dimension.
 Namely, for various choice of $d$, we added $d$ random linear equations to $I'$.
After saturating, we expect to get a closed subvariety of codimension $d$
inside the realization space $\mathcal{R}(M)$. Using this method, we found 
$\,{\rm dim}(\mathcal{R}(M)) = 4\,$ for each of the
nine challenging matroids $M$.

In conclusion,
the dimensions of  $\mathcal{R}(M)$ for the $696$ realizable
self-dual matroids $M$~are 
\begin{equation}
\label{eq:dimR(M)}
 0^{16}, \,1^{113},\, 2^{220}, \,3^{175}, \, 4^{83},\, 5^{27},\, 6^{34}, \, 7^4,\, 8^{13},\, 9^1, \,10^6,\, 12^2,\, 14^1, \,16^1.
 \end{equation}
 Our {\tt Magma} implementation to compute the self-dual realization space was fully successful for $407$ of the $696$ realizable
self-dual matroids. 
In this data, we discovered three realizable~matroids that do not
admit a self-dual realization. One is presented in the next example.
\end{proof}

\begin{example} \label{ex:interesting}
We present a rank $5$ self-dual matroid $M$ with $\mathcal{R}(M) \not= \emptyset$
and $\mathcal{S}(M) = \emptyset$. Our matroid has $16$ nonbases. The eight nonbases
not containing the element $10$ are
$$ 12347,\, 12369,\, 12468,\, 12789,\, 13568,\, 14569,\, 23459,\, 23567,\, 36789. $$
The realization space $\mathcal{R}(M)$ is a very affine threefold. Its points are the configurations
$$
X \,\,\, = \,\,\, \begin{small}
\begin{pmatrix}
1 & 0 & 0 & 0 & 0 & 1 & 1 & 1 & 0 & 1 \\
 0 & 1 & 0 & 0 & 0 & 1 & a & b & 1 & 1 \\
 0 & 0 & 1 & 0 & 0 & 1 & c & d & 1 & 0 \\
 0 & 0 & 0 & 1 & 0 & 1 & 1 & b & e & f \\
 0 & 0 & 0 & 0 & 1 & 1 & 0 & d & e & g
 \end{pmatrix}, \end{small}
 $$
 where the $236$ basis minors are nonzero, and the $16$ nonbasis minors impose the equations
  $$ \!
ce{+}g \,=\, a e {-} e {+} f{-} 1 \,=\, a d {-} b c{+}c{-}d \, = \,
  b g {+} d e {-} d g {-} d \,=\, b e {+} b f {+} d e {-} d g {-} 2 b {-} d{-}e {+} g{+}1\, = \, 0.
  $$
  We now introduce $10$ new variables, by setting  $\Lambda = {\rm diag}(\lambda_1,\lambda_2,\ldots,\lambda_{10})$.
 Consider  the ideal  in $\QQ[a,b,\ldots, g, \lambda_1,\ldots,\lambda_{10}]$ that is
  generated by the entries of the $5 \times 5$ matrix     $\,X \cdot \Lambda \cdot X^T $.
  One checks that   the monomial  $\lambda_2 \lambda_5$ lies in that ideal. From this
  we conclude that    $\,\mathcal{S}(M) = \emptyset$.  
\end{example}

\begin{remark} By Proposition \ref{prop:stable}, only
 one of the $1042$ self-dual matroids in Theorem \ref{thm:1042}
is not stable. That matroid  has $120 =\binom{4}{2} \cdot \binom{6}{3} $ bases,
and it is the direct sum of the  two uniform matroids, of rank $2$ on $[4]$
and of rank $3$ on $[6]$. Geometrically, this is the special configuration 
of ten points in $\PP^4$  discussed
in \cite[Remark 3, page 47]{DO}, namely
four collinear points plus six coplanar points.
The realization space $\mathcal{R}(M) $ is $5$-dimensional.
Its subvariety $\mathcal{S}(M)$ has dimension $4$, and it 
arises by requiring that the six coplanar points lie on a conic.
\end{remark}

From the computations for Theorem \ref{thm:1042},
we obtained explicit sample points in $\mathcal{S}(M)$
for all but $6$ of the $692$ non-uniform matroids $M$ for which this very affine variety has not been shown to be empty. In the cases where a Gr\"obner basis
was reached, we often also succeeded in finding sample points with coordinates in $\QQ$. In the other cases,
we constructed points with coordinates in a number field. 
Equipped with these data, we return to the question that was answered for rank $4$ in Proposition \ref{prop:9of12}:
which of the  rank $5$ matroids $M$ that admit self-dual realizations arise as hyperplane sections of
genus $6$ canonical curves? This question will be addressed in the next section, which 
concerns curves whose genus is between $6$ and~$10$.

\section{From genus six to Mukai Grassmannians}
\label{sec4}

Given any stable self-dual configuration $X$, it is natural to ask 
whether $X$ can be obtained as a hyperplane
section of a canonical curve $C$ in $\PP^{g-1}$. We explored this  for $g \leq 5$. Table~\ref{tab:genus5}
shows a list of smooth canonical curves of genus $5$ for
nine matroids.
 We now ask the same question when the genus is 
in the range $ 6 \leq g \leq 10$. 
This range allows for a representation of 
 self-dual configurations
 as linear sections of a  higher-dimensional variety, here called the
   {\em Mukai Grassmannian}~\cite{Kap2, Mukai, Mukai2}.
This theory is well-known to experts on canonical curves and K3 surfaces.
The purpose of this section is to furnish an exposition that emphasizes
computations and combinatorics.
 For instance, we list ideal generators for the 
Mukai Grassmannians explicitly, following~\cite{Kap1}.
Our main contribution contains the lifting algorithm for self-dual points in linear general position to genus six canonical curves translated from \cite{Bath}. An implementation together with extensive computations supports our conjecture that this algorithm holds more generally.
For $g = 7,8,9$, the lifting problem was studied by Petrakiev \cite{Pet}, 
and we present his results from our perspective.
This section is our report on 
first steps towards  being able to transition {\em in practice}
between  canonical curves and self-dual matroids.

We begin with the  first non-trivial case,  $g = n+1=6$.
We shall present an algorithm
for realizing a self-dual configuration $X$ of $10$ points in $\PP^4$ as a hyperplane
section of a canonical curve $C$ in $\PP^5$.
The input $X$ will be required to satisfy a 
genericity assumption, and we aim for a curve
$C$ that is smooth. Our algorithm was implemented in
{\tt Macaulay2} \cite{M2}.
We applied it to
a wide range of configurations arising from the matroids in
Theorem~\ref{thm:1042}.

The algorithm follows the  discussion in the 1938 paper by  Bath \cite{Bath}.
A modern interpretation was given by Eisenbud and Popescu in
 \cite[Remark 9.1]{EP}.
Our point of departure is the following 
well-known fact about the equations that define a
genus $6$ canonical curve~$C$.
We are interested in the homogeneous prime ideal
$I_C \subset \CC[x_0,x_1,\ldots,x_5]$ and syzygies thereof.

\begin{lemma} \label{prop:skewsymm}
Let $C$ be a genus $6$ canonical curve that is neither hyperelliptic, nor trigonal, nor a plane quintic.
The ideal $I_C$ of  the curve $\,C$ is generated by six quadrics, where
five of the quadrics can be chosen as the $4 \times 4$-subpfaffians of a skew-symmetric
$5 \times 5$ matrix of linear forms. The first and second syzygies are summarized in the Betti table
of $I_C$, which~is
\begin{equation} \label{eq:betti6}
\begin{bmatrix} 6 & 5 & . \\ . & 5 & 6 \end{bmatrix}.
\end{equation}
\end{lemma}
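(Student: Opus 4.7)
The plan is to combine a classical Brill--Noether count with Mukai's description of genus six canonical curves as linear sections of $\mathrm{Gr}(2,5)$, and then pin down the Betti table from the Hilbert function and Gorenstein self-duality.

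First I would show $\dim(I_C)_2 = 6$ and that these six quadrics already generate $I_C$. Since $C$ is non-hyperelliptic, Noether's theorem ensures projective normality, so the restriction $H^0(\PP^5, \mathcal{O}(2)) \to H^0(C, 2K_C)$ is surjective; its source has dimension $\binom{7}{2} = 21$ while Riemann--Roch gives $h^0(2K_C) = 3g-3 = 15$, yielding $\dim(I_C)_2 = 6$. The Enriques--Petri theorem then forces $I_C$ to be generated by these six quadrics precisely under the three stated exclusions, since hyperelliptic, trigonal, and plane quintic are exactly the cases in which extra cubic generators would be required.

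For the skew-symmetric Pfaffian structure I would invoke Mukai's classification of genus six canonical curves \cite{Mukai}: any such curve $C$ is the transverse intersection of a smooth quintic del Pezzo surface $S \subset \PP^5$ with one additional quadric, where $S$ is realized as the linear section $\mathrm{Gr}(2,5) \cap \PP^5$ of the Pl\"ucker-embedded Grassmannian $\mathrm{Gr}(2,5) \subset \PP^9$. The ideal of $\mathrm{Gr}(2,5)$ is generated by its five Pl\"ucker relations, which under the identification $\bigwedge^2 K^5 \simeq \mathrm{Skew}_5(K)$ are exactly the $4 \times 4$ subpfaffians of a generic skew-symmetric $5 \times 5$ matrix of linear forms. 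Restricting to $\PP^5$ yields the five Pfaffian generators of $I_C$, with the sixth quadric cutting $C$ out of $S$.

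For the Betti table, I would compute the Hilbert series of $R = S/I_C$ from Riemann--Roch ($\dim R_n = 10n - 5$ for $n \ge 2$), obtaining the rational form with numerator $P(t) = 1 - 6t^2 + 5t^3 + 5t^4 - 6t^5 + t^7$. The palindromic shape $P(t) = t^7 P(1/t)$ reflects the Gorenstein property of $R$, since canonically embedded curves satisfy $\omega_C = \mathcal{O}_C(1)$, and hence the minimal resolution is self-dual under $\beta_{i,j}(R) = \beta_{4-i,\, 7-j}(R)$. Combined with $\beta_{1,j}(R) = 0$ for $j \ne 2$ (quadric generation) and the coefficients of $P(t)$, this pins the Betti numbers to the values in \eqref{eq:betti6}. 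The main obstacle is the appeal to Mukai's theorem in the second step: establishing the Pfaffian structure uniformly for \emph{every} curve in the stated class (rather than a generic one) rests on the full content of \cite{Mukai}, whereas the quadric count and the Betti table symmetry are comparatively routine.
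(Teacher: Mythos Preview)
The paper does not actually prove this lemma; it is introduced as ``the following well-known fact about the equations that define a genus $6$ canonical curve'' and is used without argument as the starting point for Algorithm~\ref{alg:genus6}. So your proposal supplies what the paper deliberately omits, and there is nothing to compare against.

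Your outline is correct. The quadric count via projective normality and Riemann--Roch, the appeal to Enriques--Petri for generation in degree~$2$, and the Betti table computation from the Hilbert numerator together with Gorenstein self-duality $\beta_{i,j}=\beta_{4-i,7-j}$ are all standard and accurate. One refinement worth noting on the Pfaffian step: you do not need the full strength of Mukai's embedding, nor smoothness of the del Pezzo surface. Once you know that five of the six quadrics in $I_C$ cut out an arithmetically Gorenstein subscheme of codimension~$3$ in $\PP^5$ (the quintic del Pezzo, possibly singular), the Buchsbaum--Eisenbud structure theorem for codimension~$3$ Gorenstein ideals immediately forces those five quadrics to be the $4\times 4$ subpfaffians of a skew-symmetric $5\times 5$ matrix of linear forms. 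This sidesteps the transversality and smoothness concerns you flag at the end. Schreyer's 1986 paper on syzygies of canonical curves carries out essentially this analysis and would be a more direct citation than \cite{Mukai} for the lemma as stated.
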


The symmetry in the Betti table (\ref{eq:betti6}) is the
Gorenstein property of $C$, which is preserved
under hyperplane sections. Indeed, as emphasized in \cite{EP, EP2},
the Gorenstein property is a key feature of self-dual configurations.
For our algorithm we assume that $X$ is a self-dual configuration
in $\PP^4$ whose homogeneous radical ideal $I_X$ has the Betti table (\ref{eq:betti6}).

\begin{algo}[Lifting self-dual configurations of $10$ points to genus $6$ canonical curves] \label{alg:genus6}

\underline{{\rm Input}}: The ideal $I_X$ in $\QQ[x_0,\ldots,x_4]$ of a self-dual configuration in $\PP^4$
with Betti table (\ref{eq:betti6}). \smallskip
\\
\underline{{\rm Output}}: The ideal $I_C$ in $\QQ[x_0,\ldots,x_4,x_5]$ of a canonical curve $C$
which  satisfies $I_C|_{x_5 = 0}= I_X$. 
\begin{enumerate}
\item Pick four general quadrics from the ideal $I_X$ and let $J$ be the ideal they generate. \vspace{-0.2cm}
\item Compute the ideal quotient $K =J:I_X$. 
This gives six points in a hyperplane in $\PP^4$. \vspace{-0.2cm}
\item The ideal $K$ is generated by one linear form $\ell$ and four quadrics. Working modulo $\ell$,
write the quadrics in only four variables and compute two linear syzygies they satisfy.  \vspace{-0.2cm}
\item Multiply this $2 \times 4$ syzygy matrix with a random $4 \times 3$ matrix over $\QQ$.
This gives a $2 \times 3$ matrix whose entries are linear forms.
Its $2 \times 2$ minors define a twisted cubic curve.  \vspace{-0.2cm}
\item Let $L$ be the ideal generated by $\ell$ and these minors. The intersection $L \cap I_X$
is generated by three quadrics and one cubic. Let $M$ be the ideal generated by the three quadrics. \vspace{-0.2cm}
\item The ideal $M$ defines a curve of degree $8$ in $\PP^4$. The twisted cubic
is one component. The other component is an elliptic normal curve, given by
 the ideal quotient $N = M:L$. \vspace{-0.2cm}
\item
The ideal $N$ is generated by five quadrics. Compute their module of first syzygies.
This module is generated by five linear syzygies.
By changing bases if necessary, write them
 as a skew-symmetric $5 \times 5$ matrix $\Sigma$  whose entries
are linear forms in $\QQ[x_0,\ldots,x_4]$. \vspace{-0.2cm}
\item The ideal ${\rm pf}_4(\Sigma)$ generated by the
five $4 \times 4$ subpfaffians  of $\,\Sigma$ 
is contained in $I_X$. At least one of
the six generators of $I_X$ does not lie in ${\rm pf}_4(\Sigma)$.
Let $q$ be such a quadric. \vspace{-0.2cm}
\item Output the representation $\,I_X = \langle q \rangle + {\rm pf}_4(\Sigma)\,$ for the ideal of the ten given points.
\vspace{-0.2cm}
\item Lift $q$ to a quadric $\tilde q$ in six variables
by adding $x_5$ times a random linear form.
Let $\tilde \Sigma$ be the sum of $\,\Sigma$ 
plus $x_5$ times a random skew-symmetric matrix with entries in $\QQ$. \vspace{-0.2cm}
\item Set $\,I_C = \langle \tilde q \rangle + {\rm pf}_4(\tilde \Sigma)\,$ to be the representation  for the ideal of the canonical curve $C$.\vspace{-0.2cm}
\item Verify $C$ is non-singular: if so, output $I_C$.
If not, restart the algorithm at step (1).

\end{enumerate}
\end{algo}
\begin{conjecture}
 \label{conj:alggenus}
Algorithm \ref{alg:genus6} solves
the lifting problem for any self-dual configuration $X$ of $10$ points in $\PP^4$ whose ideal $I_X \subset \QQ[x_0, \dots, x_4]$ has the Betti table \eqref{eq:betti6}. \end{conjecture}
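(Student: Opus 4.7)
The strategy is to verify each step by combining three classical inputs: the Buchsbaum--Eisenbud structure theorem for codimension~$3$ Gorenstein ideals, liaison (linkage) theory, and Mukai's description of a generic genus~$6$ canonical curve as a quadric section of a linear section of the Grassmannian $\Gr(2,5) \subset \PP^9$. The Mukai picture asserts that a smooth, non-hyperelliptic, non-trigonal, non-plane-quintic canonical curve $C \subset \PP^5$ has ideal $I_C = \langle \tilde q \rangle + \mathrm{pf}_4(\tilde\Sigma)$, where $\tilde\Sigma$ is a $5\times 5$ skew-symmetric matrix of linear forms cutting out a degree~$5$ Fano threefold and $\tilde q$ is an additional quadric. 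Restricting to $\{x_5 = 0\}$ yields a self-dual configuration with presentation $I_X = \langle q \rangle + \mathrm{pf}_4(\Sigma)$. The algorithm is justified once we can intrinsically reconstruct the pair $(q,\Sigma)$ from $I_X$ and argue that a sufficiently generic lift in steps~(10)--(11) recovers a smooth~$C$.

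Steps~(1)--(7) would be justified via liaison together with the structure theorem. Four general quadrics in $I_X$ define a complete intersection of degree~$16$ containing $X$; by liaison, the residual scheme has degree~$6$ and is arithmetically Gorenstein, and the hypothesis of Betti table~\eqref{eq:betti6} forces it to be six points spanning a hyperplane and lying on a unique twisted cubic, whose defining $2\times 2$ minors can be read off from the linear syzygies in steps~(3)--(5). Linking $X$ through the complete intersection of this twisted cubic with a quadric in $I_X$ produces in step~(6) an elliptic normal curve of degree~$5$ in $\PP^4$, a codimension~$3$ arithmetically Gorenstein subscheme. By Buchsbaum--Eisenbud, the ideal of this curve is exactly $\mathrm{pf}_4(\Sigma)$ for a $5\times 5$ skew-symmetric matrix $\Sigma$ of linear forms, which is step~(7). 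A Hilbert function count using~\eqref{eq:betti6} then forces $I_X = \langle q \rangle + \mathrm{pf}_4(\Sigma)$ for any quadric $q \in I_X \setminus \mathrm{pf}_4(\Sigma)$, giving step~(9).

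For the lifting in steps~(10)--(11), the defining equations $\tilde q$ and $\mathrm{pf}_4(\tilde\Sigma)$ satisfy $I_C|_{x_5 = 0} = I_X$ by construction, since the perturbations vanish modulo $x_5$. It remains to argue smoothness: the Pfaffian threefold $V(\mathrm{pf}_4(\tilde\Sigma)) \subset \PP^5$ has at worst isolated singularities for generic skew perturbations $\Theta$, and a Bertini-type argument applied to the pencil of quadrics through $q$ (which is a base-point-free pencil on $V(\mathrm{pf}_4(\tilde\Sigma))$ away from $X$) shows that the generic $\tilde q$ cuts out a smooth canonical curve. Step~(12) discards the measure-zero failures.

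The hard part will be proving the conjecture in the \emph{uniform} form asserted, namely for \emph{every} self-dual configuration with Betti table~\eqref{eq:betti6}, not merely a Zariski-open subset. Three degenerate phenomena must be excluded: the residual scheme in step~(2) could break away from the stratum of six points on a smooth twisted cubic; the Pfaffian matrix $\Sigma$ extracted in step~(7) is only unique up to the $\mathrm{GL}_5 \times \mathrm{GL}_5$ action on skew presentations, which could cause a bad choice in step~(8); and the perturbation $(\ell,\Theta)$ in step~(10) could land in the discriminant locus where $V(\tilde q) \cap V(\mathrm{pf}_4(\tilde\Sigma))$ acquires singularities. Controlling the first two requires a stratification of self-dual Gorenstein configurations by their Pfaffian presentations, and controlling the third demands a transversality statement for quadric sections of a Mukai threefold constrained to restrict to a prescribed $X$ on a hyperplane. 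The reason the authors state this as a conjecture rather than a theorem, despite extensive \texttt{Macaulay2} verification on the matroids from Theorem~\ref{thm:1042}, is precisely that these three loci have not been shown to have empty intersection with the full realization space.
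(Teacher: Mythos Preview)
The statement is a \emph{conjecture} in the paper, not a theorem, and the paper offers no proof. Immediately after stating it, the authors explain that Bath's 1938 construction settles only the generic case (where the matroid of $X$ is uniform), and that the remaining evidence is purely computational: their \texttt{Macaulay2} implementation succeeded on all $269$ rational self-dual realizations with Betti table~\eqref{eq:betti6} arising from the matroid census in Theorem~\ref{thm:1042}, and on a further $217$ realizations over number fields. Your final paragraph correctly diagnoses this situation, so in spirit your proposal is aligned with the paper: you are not claiming a proof, but rather sketching the architecture of one and isolating the obstructions.

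That said, there are a few inaccuracies in your sketch that you should correct before pushing this further. First, the residual six points from step~(2) are arithmetically Cohen--Macaulay but \emph{not} arithmetically Gorenstein: their $h$-vector in the hyperplane $\PP^3$ is $(1,3,2)$, which is not symmetric. Liaison through a complete intersection preserves ACM but not Gorenstein. Second, the twisted cubic through those six points is \emph{not} unique; the random $4\times 3$ matrix in step~(4) selects one member of a family, and this freedom is genuinely used. Third, your description of step~(6) as ``linking $X$ through the complete intersection of this twisted cubic with a quadric'' does not match the algorithm: rather, three quadrics in $L\cap I_X$ cut out a degree~$8$ curve whose residual to the twisted cubic is the elliptic normal quintic. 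These corrections do not undermine your overall Buchsbaum--Eisenbud/liaison/Mukai framework, which is indeed the one signposted by the paper's references to Bath and Eisenbud--Popescu, but they do sharpen where the actual difficulties lie.
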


Bath's construction in \cite[Section 2]{Bath} proves this conjecture for configurations $X$ in
linearly general position, i.e.~when the self-dual matroid of $X$ is uniform. The point of
Conjecture~\ref{conj:alggenus} is that the
 matroid of $X$ need not be uniform.
 We verified the conjecture experimentally. Namely,
we applied Algorithm \ref{alg:genus6} to many special
 configurations, arising from self-dual realizations of matroids in the database for rank $5$ in Theorem \ref{thm:1042}.
In Section \ref{sec3}, we described our computation of rank $5$ self-dual matroids $M$. 
For the $692$ non-uniform matroids for which $\mathcal{S}(M) \not= \emptyset$ 
has not been ruled out, we produced self-dual configurations realizing $686$ of the matroids. Of these, $283$ configurations had all $10$ points defined over $\QQ$. For these $283$ configurations, $269$ had the Betti table \eqref{eq:betti6}. For each realization of these $269$ matroids we considered, our {\tt Macaulay2} implementation of
Algorithm~\ref{alg:genus6} provides a smooth genus $6$ curves of degree $10$ with the expected Betti table passing through the points.

Algorithm~\ref{alg:genus6} also extends readily to points defined over a number field $K/\QQ$. However, depending on the degree of the number field and height of the coordinates, our implementation of the algorithm can take between a few minutes and several hours
to  terminate. Of the remaining $403$ self-dual configurations defined over number fields, $402$ have Betti table \eqref{eq:betti6}, and for $217$ of these, we produced smooth genus $6$ curves passing through the points.

\begin{example}[Petersen Graph]
In Example \ref{ex:petersen}  we study the point configuration 
 (\ref{eq:petersenmatrix}). This is a 
 hyperplane section of a reducible genus $6$ canonical curve whose dual graph is the Petersen graph in Figure \ref{fig:ptersengraph}.
 The ideal of this configuration has the Betti table~\eqref{eq:betti6}. Algorithm~\ref{alg:genus6}
  computes a smooth  genus $6$ canonical curve with \eqref{eq:betti6} passing through this configuration.
 The curve is given by the  $4 \times 4$-subpfaffians of the skew-symmetric
 $5  \times 5$~matrix 
 $$ 
\begin{small} \!\!
 \frac{31}{25} \! \begin{pmatrix}
0& x_3&\!\! \frac{-1}{9}(5x_2{+}x_3)\!\!& \frac{1}{5}x_3& x_1+ \frac{6}{5}x_3 \\
-x_3& 0& \frac{2}{3}x_3& -x_0-2x_3& -x_1{-}x_3{+}x_4 \!\! \\
 \! \! \frac{1}{9}(5x_2{+}x_3)& \frac{-2}{3}x_3& 0&\!\!\!\! -x_1{-}\frac{1}{45}x_3{-}\frac{1}{3}x_4 \!\! & \!\! \frac{-1}{9} (\frac{31}{5}x_3+x_4) \!\!\! \\
 \frac{-1}{5}x_3& x_0+2x_3& \!\! \!\! x_1{+}\frac{1}{45}x_3{+}\frac{1}{3}x_4 \!\! & 0& \!\! 2x_3{-}x_2{-}\frac{7}{5}x_4\\
 \! -x_1{-}\frac{6}{5}x_3&\! \! x_1{+}x_3{-}x_4 \!& \!\! \frac{1}{9} (\frac{31}{5}x_3+x_4)& {-}2x_3{+}x_2{+}\frac{7}{5}x_4& 0 
        \end{pmatrix} +
        \begin{pmatrix}
0 & \!\! 0 & \!\! 0 & \!\!\! \! \! -x_5 \! &\!\!\!\! - x_5\!\! \\
0 &\!\! 0 & \!\! 0 &\!\! x_5 &\!\! 0 \!\\
0 & \!\! 0 & \!\! 0 & \!\!\ 0 & \!\!x_5 \!\\
x_5 & \!\!\! -x_5 \!\! & \!\! 0 &\!\! 0 & \!\!\!\!\!\! -x_5 \!\! \! \\
x_5 & \!\! 0 &\!\!\!\! \! -x_5 \! &\!\! x_5 &\!\!  0 
        \end{pmatrix}
        \end{small}
$$
along with the  quadric
$\,6x_0x_3+9x_1x_3+5x_2x_3+x_3^2+8x_3x_4+x_0x_5 $.
Set $x_5=0$ to get (\ref{eq:petersenmatrix}).
\end{example}

We now turn to canonical curves of genus $g =7,8,9,10$.
These curves have a beautiful representation,
due to Mukai \cite{Mukai, Mukai4, Mukai3, Mukai2},
 as linear sections
of a certain fixed variety $G$ in a higher-dimensional
projective space $\PP^{N}$.
We refer to $G$ as the {\em Mukai Grassmannian} in genus~$g$.
Setting $D = {\rm dim}(G)$, the representation works as follows.
Consider a general linear subspace $L$ of dimension $\ell$ in $\PP^N$.
The intersection $L \cap G$ is a variety of dimension~$D + \ell - N$.  

For an interesting  range of dimensions $\ell$, the following geometries appear:
\begin{itemize}
\item[(a)] If $D + \ell - N =3$, then $L \cap G$ is a Fano threefold in $L$. \vspace{-0.3cm}
\item[(b)] If $D+ \ell - N = 2$,  then $L \cap G$ is a K3 surface in $L$. \vspace{-0.3cm}
\item[(c)] If $D+ \ell -N = 1$,  then $L \cap G$ is a canonical curve in $L$. \vspace{-0.3cm}
\item[(d)] If $D + \ell-N = 0$, then $L \cap G$ is a self-dual configuration in $L$.
\end{itemize}
If $g \in \{7,8,9\}$, then the representation (c) is universal \cite{Mukai4, Mukai3, Mukai2}:
every general canonical curve $C$ of genus $g$ satisfies $C = L \cap G$
for some subspace $L$. For $g=10$, this representation imposes
a codimension one condition on the curve $C$.
These facts are well-known in algebraic geometry.
 The connection to K3 surfaces in (b) and
Fano threefolds in (a) has been a central element of Mukai's theory.
We focus on the  representation of self-dual configurations in (d).

The Mukai Grassmannians $G$ suggest  two computational problems.
The {\em forward problem} asks: given a linear space $L$,
compute the resulting self-dual configuration $X = L \cap G$.
The {\em lifting problem} asks: given a  self-dual configuration $X$ in $\PP^{g-2}$,
compute a linear space $L$ such that $X = L \cap G$. 
The lifting problem is much harder than the forward problem.

Before examining these two problems, we shall review the Mukai Grassmannians.
We follow the presentation by Kapustka \cite{Kap1, Kap2}.
 We furnish code in {\tt Macaulay2}
that creates a prime ideal {\tt G} in a polynomial ring. 
The variety of {\tt G} is the Mukai Grassmannian $G$ in $\PP^N$.
The Betti table for each 
geometry agrees with that of the Mukai Grassmannian.

\bigskip \noindent \underbar{$g=7$}: 
Here, $N = 15$, $D = 10$, $\,C$ is a curve of degree $12$ in $\PP^6$, and
$X$ is a self-dual configuration of $12$  points in $\PP^5$.
The Mukai Grassmannian $G$ is the {\em orthogonal
Grassmannian}  \cite[(3.1)]{Kap1}, \cite[Section 3.2]{Kap2}.
It is parametrized by the $16 = 1 + 10 + 5$
subpfaffians of a skew-symmetric $5 \times 5$-matrix.
They  satisfy $10$ quadratic relations, and these 
generate the ideal:
\begin{verbatim}
R = QQ[a..p]; G = ideal(k*m-j*n+h*o-e*p, k*l-i*n+g*o-d*p, j*l-i*m+f*o-c*p, 
       h*l-g*m+f*n-b*p, e*l-d*m+c*n-b*o, h*i-g*j+f*k-a*p, e*i-d*j+c*k-a*o, 
                        e*g-d*h+b*k-a*n, e*f-c*h+b*j-a*m, d*f-c*g+b*i-a*l );
\end{verbatim}
\vspace{-0.1in}
The ideal {\tt G} is Gorenstein. Its resolution has the Betti table
\begin{tiny} $ \,\,\begin{bmatrix} 10 & 16 & . & . \\ . & . &  16 & 10 \end{bmatrix}$. \end{tiny}

\medskip \noindent \underbar{$g=8$}:
Here, $N=14$, $D=8$, $C$ is a curve of degree $14$ in $\PP^7$, and
$X$ is a self-dual configuration of $14$ points in $\PP^6$.
Here {\tt G} is the Pl\"ucker ideal of  the classical Grassmannian ${\rm Gr}(2,6) \subset \PP^{14}$,
i.e.~the $4 \times 4$-subpfaffians of a skew-symmetric $6 \times 6$-matrix \cite[Section 3.1]{Kap1}:
\begin{verbatim}
R = QQ[a..o]; G = Grassmannian(1,5,R);
\end{verbatim}
\vspace{-0.1in}
The ideal {\tt G} is Gorenstein. Its resolution has the Betti table
\begin{tiny} $ \,\,\begin{bmatrix} 15 & 35 & 21 &  . & . \\
 . & . &  21 & 35 & 15 \end{bmatrix}$. \end{tiny}

\medskip \noindent \underbar{$g=9$}:
Here, $N=13$, $D=6$,  $\,C$ is a curve of degree $16$ in $\PP^8$,
 $X$ consists of $16$ points in $\PP^7$, and
 $G$ is the {\em Lagrangian 
Grassmannian} ${\rm LGr}(3,6)$. Its ideal {\tt G} is generated by
$21$ quadrics \cite[Section 3.3]{Kap1}, \cite[(3.2)]{Kap2}. 
These are the {\em $12$ edge trinomials} and {\em $9$ square trinomials} in \cite{Boege}.
 \begin{small}
\begin{verbatim}
R = QQ[a..n]; G = ideal(f*k-e*l+j*n,e*k+f*m-i*n,f*j-b*k-d*l,e*j-d*k+b*m-a*n,
  f*i+d*k-c*l+a*n,e*i-c*k-d*m,g*h+2*d*k-c*l-b*m+a*n,f*h-j*k+i*l,k^2+l*m+h*n,
    e*h+i*k+j*m,d*h+i*j-a*k,c*h+i^2+a*m,b*h-j^2+a*l,f^2+g*l+b*n,e*f+g*k-d*n,
    a*f-b*i-d*j,e^2-g*m-c*n,d*e-c*f-g*i,b*e+d*f+g*j,a*e+d*i-c*j,b*c+d^2+a*g);
\end{verbatim}
\end{small}
\vspace{-0.07in}
The ideal {\tt G} is Gorenstein. Its resolution has the Betti table
\begin{tiny} $ \begin{bmatrix} 21 \! & \! 64 \! & \! 70 \! & \! .\! & \!. \! & \! . \\
. \! & \! .\! &\! . \! & \! 70 \! & \! 64 \! & \! 21 \end{bmatrix}$. \end{tiny}

\medskip \noindent \underbar{$g=10$}:
Here, $N=13$, $D=5$,  $\,C$ is a curve of degree $18$ in $\PP^9$, and
$X$ consists of $18$ points in $\PP^9$. Moreover, $G$
 is an orbit closure of 
the adjoint representation of the {\em exceptional Lie group $G_2$}.
Its ideal {\tt G} has the following matrix representation,
given in \cite[Section 3.4]{Kap1}.
\begin{small}
\begin{verbatim}
R = QQ[a..n]; G = pfaffians(4, matrix {{0,-f,e,g,h,i,a},
 {f,0,-d,j,k,l,b},{-e,d,0,m,n,-g-k,c},{-g,-j,-m,0,c,-b,d},
 {-h,-k,-n,-c,0,a,e},{-i,-l,g+k,b,-a,0,f},{-a,-b,-c,-d,-e,-f,0}});
\end{verbatim}
\end{small}
The ideal {\tt G} is Gorenstein, and its Betti table equals
\begin{tiny} $\! \begin{bmatrix} 
    28 & 105  & 162 & 84  &    .    &    .   &    .   \\
    .     &    .   &    .   & 84  & 162 & 105 & 28  \\
\end{bmatrix}$. \end{tiny}
              
 Equipped with explicit quadrics for the four Mukai Grassmannians $G$,
one can experiment with the forward problem: pick a (random)
subspace $L$ and examine the
(matroid of the) self-dual configuration $X = L \cap G$.
Here we want to fix an isomorphism $L \simeq \PP^{g-2}$,
so that $X$ is given by a $(g-1) \times (2g-2)$ matrix, 
as in the previous sections. An intermediate step is
to represent $X$ by its
ideal $I_X$ in the polynomial ring
$\QQ[x_0,x_1,\ldots,x_{g-2}]$.
After creating {\tt G} with one of the  four code fragments above,
this ideal is now found in {\tt Macaulay2} as follows:
\begin{small}
\begin{verbatim}
  g = 10; S = QQ[x_0..x_(g-2)]; L = map(S,R,apply(# gens R,t->random(1,S)));
  IX = L(G);  dim IX, degree IX
\end{verbatim}
\end{small}

This process provides samples from the space
$\mathcal{S}(n)$, for $n=g-1$. These
can be compared to the samples obtained from 
Algorithm \ref{alg:SOn}. The new method has pros and cons.
On the one hand, $X$ now lies on many canonical curves:
simply take $C = L' \cap G$, where $L' \simeq \PP^{g-1}$
is any subspace that contains $L$.
On the other hand, to write down the 
$n \times 2n$ matrix $X$, 
we must solve the equations in {\tt IX},
either numerically or symbolically.
One worthwhile experiment is to
record  the number of real points in $X = L \cap G$,
for a large sample of subspaces $L$.
In light of the next result,
we expect that this number can be
be any even integer between $0$ and $2n$.

\begin{proposition}
For every $n \geq 4$, there exists a self-dual configuration $X$
of $2n$ points in $\PP^{n-1}$, lying on a real canonical curve $\,C$ ,
where all coordinates of all points are real numbers.
\end{proposition}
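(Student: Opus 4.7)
The plan is to construct $C$ as a real graph curve $C_G$ in the sense of Section~\ref{sec5}. For every $n \geq 4$ there exist trivalent $3$-connected graphs $G$ on $2n$ vertices (for instance the $n$-prism, obtained by joining two disjoint $n$-cycles by a perfect matching, which is trivalent and $3$-connected for all $n \geq 3$). The associated graph curve $C_G$ is a stable canonical curve of arithmetic genus $g = n+1$, embedded canonically in $\PP^{n}$ as a union of $2n$ lines whose incidence pattern is prescribed by the edges of $G$.

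First I would show that $C_G$ admits a real realization: $2n$ real lines $L_v \subset \PP^n$ together with real intersection points $p_{uv} = L_u \cap L_v$ for each edge $\{u,v\}$ of $G$. A parameter count ($4n(n-1)$ line parameters minus $3n(n-2)$ intersection conditions minus $n^2+2n$ automorphisms) shows that the expected dimension of this realization space, modulo $\mathrm{PGL}_{n+1}$, is $0$. Non-emptiness over $\CC$ is established in Section~\ref{sec5}, and the incidence conditions are rational, so the space is defined over $\QQ$. For small $n$ explicit rational realizations are in hand: the four graph curves in Figure~\ref{fig:graphs8} cover $n = 4$, and Theorem~\ref{thm:graphcurves4567} yields candidates for $n \leq 6$. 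For general $n$ I would realize the $n$-prism explicitly by an inductive construction, placing successive rungs and cycle-edges so as to solve at each step a system of rational linear conditions with at least as many free parameters as equations.

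Next I would pick a real hyperplane $H \simeq \PP^{n-1}$ in general position with respect to $C_G$. Because every line $L_v$ and $H$ are defined over $\RR$, the intersection $L_v \cap H$ is a single real point, and genericity of $H$ ensures that the $2n$ resulting points of $X := C_G \cap H$ are distinct and in linear-general position. By Theorem~\ref{thm:MG}, the matroid of $X$ is the self-dual matroid $M_G$; in particular $X$ is a real self-dual configuration of $2n$ points with all coordinates real.

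The hard part is verifying, for arbitrary $n$, that some trivalent $3$-connected graph on $2n$ vertices admits a real (indeed rational) graph-curve realization; the inductive recipe must in principle be checked at every step to avoid degenerate choices that reduce the dimension of subsequent solution sets. For $n \leq 9$ this difficulty can be bypassed via the Mukai Grassmannian of Section~\ref{sec4}: a real linear subspace $L \subset \PP^N$ chosen close to one that cuts out a real graph curve produces a \emph{smooth} real canonical curve whose nearby real hyperplane section remains entirely real, yielding a stronger statement in that range.
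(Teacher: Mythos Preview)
Your route through graph curves is different from the paper's and does lead to a valid proof, but two points deserve comment.

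First, the step you flag as ``the hard part''---producing a real realization of the graph curve---is in fact trivial: Proposition~\ref{prop:graphcurveunique} builds $C_G$ explicitly from the integer cycle matrix $\mathrm{Cyc}_G$, so $C_G$ is defined over $\QQ$ automatically, for every trivalent $3$-connected $G$. No parameter count, no inductive construction, and no detour through Mukai Grassmannians is needed. Slicing with a generic rational hyperplane immediately gives $2n$ rational points.

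Second, your appeal to Theorem~\ref{thm:MG} only shows that the \emph{matroid} $M_G$ is self-dual; as Example~\ref{ex:interesting} demonstrates, this alone does not force the configuration to satisfy~\eqref{eq:XDX}. That $X_G$ is a self-dual \emph{configuration} requires the Riemann--Roch (equivalently, Gorenstein) argument from the introduction, which does apply to the nodal curve $C_G$ but is not what Theorem~\ref{thm:MG} asserts.

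The paper's own proof takes a different route and obtains a stronger conclusion. It chooses $C$ to be a smooth M-curve of genus $g=n+1$, whose real locus has $g+1$ ovals. Picking one point on each of $n$ ovals spans a hyperplane $H$; since a real holomorphic differential has an even number of zeros on each oval, $H$ meets each chosen oval at least twice, and the degree bound $2g-2=2n$ then forces exactly $2n$ real intersection points and no complex ones. Thus the paper produces a \emph{smooth} real canonical curve, whereas your construction yields a reducible one.
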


\begin{proof} We fix an M-curve of genus $g=n+1$.
This is a smooth curve $C$, defined over $\RR$, whose real locus has $n+2$ connected components. 
Holomorphic differentials have an even number of zeros on each connected
component \cite[Corollary 4.2.2]{CP}.
Hence every component of $C$ is an oval and not a pseudoline. We select $n$ of the ovals and we fix
one point on each of them. The $n$ points span a hyperplane in $\PP^{n-1}$.
That hyperplane
intersects each oval twice, so
it meets the curve in $2n$ real points.
This is our fully real configuration $X$ in $\PP^{n-1}$.
\end{proof}
   
\begin{remark}
There are also {\tt Macaulay2} packages for 
 random canonical curves and K3~surfaces.
 The state of the art for curves is
the package {\tt RandomCanonicalCurves}
by von Bothmer and Schreyer which works up to $g=14$.
Hoff and Staglian\` o \cite{HS} developed the package
{\tt K3s} which created embeddings of K3 surfaces.
By slicing the resulting curves and surfaces, we  can sample from
self-dual configurations over finite fields and explore their matroids.
\end{remark}

We now come to the lifting problem. The input is a self-dual configuration $X$ in $\PP^{n-1}$.
We can ask for a lifting of $X$
 to the Mukai Grassmannian, or just to a
canonical curve. To discuss the latter, we write
 $\mathcal{K}(n)$ for the closure of the set of self-dual configurations $X$
 in $\mathcal{S}(n)$
that arise as hyperplane sections
 of some smooth canonical curve
in $\PP^n$.
We call such $X$ {\em canonical configurations}.
The ideals {\tt G} above allow us to
sample from  $\mathcal{K}(n)$
when $n \leq 9$.

\begin{theorem}[Petrakiev \cite{Pet}] \label{thm:petr}
The variety $\mathcal{K}(n)$ of canonical configurations is irreducible.
It satisfies $\,\mathcal{K}(n) = \mathcal{S}(n)\,$ for $n \leq 7$,
but the strict inclusion $\,\mathcal{K}(n) \subsetneq \mathcal{S}(n)\,$ holds for $n = 8$.
\end{theorem}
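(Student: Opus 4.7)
The plan is to treat the three assertions separately, leveraging the Mukai representation of canonical curves as linear sections of the Mukai Grassmannians $G \subset \PP^N$ reviewed above. Throughout, set $g = n+1$ and write $\ell = N - D = n - 1$, so that self-dual configurations arise as linear sections $L \cap G$ with $L$ a $\PP^\ell$ in $\PP^N$.

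For irreducibility, I realize $\mathcal{K}(n)$ as the closure of the image of a morphism from an irreducible parameter space. Let $\mathcal{I}$ be the incidence variety of pairs $(C, H)$ where $C \subset \PP^{g-1}$ is a smooth non-hyperelliptic canonical curve of genus $g$ and $H$ is a hyperplane. The projection $\mathcal{I} \to \mathcal{M}_g$ has fibers that are dense open in $(\PP^{g-1})^*$ modulo the generically trivial action of $\mathrm{Aut}(C)$, hence irreducible; combined with the irreducibility of $\mathcal{M}_g$, this shows $\mathcal{I}$ is irreducible. The hyperplane-section map $\mathcal{I} \to \mathcal{S}(n)$, $(C,H) \mapsto [C \cap H]$, then has $\mathcal{K}(n)$ as the closure of its image, so $\mathcal{K}(n)$ is irreducible.

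For the equality $\mathcal{K}(n) = \mathcal{S}(n)$ in the range $n \leq 7$: since $\mathcal{S}(n)$ is irreducible of dimension $\binom{n}{2}$ by Theorem~\ref{thm:SOn} and $\mathcal{K}(n) \subseteq \mathcal{S}(n)$ is closed and irreducible, it suffices to show $\dim \mathcal{K}(n) = \binom{n}{2}$. The cases $n \leq 5$ reduce to classical constructions: every stable self-dual configuration lifts to a plane quartic ($n=2,3$), to a smooth complete intersection of three quadrics in $\PP^4$ ($n=4$, via the Cayley octad picture of Example~\ref{ex:34}), and to a smooth genus $6$ canonical curve ($n=5$, via Algorithm~\ref{alg:genus6}). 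For the critical cases $n \in \{6,7\}$ I invoke Mukai's theorem that every smooth canonical curve of genus $7$ or $8$ is uniquely of the form $L' \cap G$ for a linear space $L'$ of dimension $g-1$ in $\PP^N$; hence every element of $\mathcal{K}(n)$ has the form $L \cap G$ with $\dim L = \ell$, and $\mathcal{K}(n)$ is the image of the Mukai parameter space of such $L$ modulo $\mathrm{Aut}(G)$. A generic $L$ is recovered as the linear span of $L \cap G$ inside $\PP^N$, so the map is generically injective, and the source dimension computes to
\[
(\ell+1)(N-\ell) - \dim \mathrm{Aut}(G) \;=\; \begin{cases} 6\cdot 10 - 45 = 15 = \binom{6}{2}, & g=7,\; G = {\rm OG}(5,10), \\ 7\cdot 8 - 35 = 21 = \binom{7}{2}, & g=8,\; G = \mathrm{Gr}(2,6), \end{cases}
\]
yielding $\dim \mathcal{K}(n) = \binom{n}{2}$ as required.

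For the strict inclusion $\mathcal{K}(8) \subsetneq \mathcal{S}(8)$ and the main obstacle: in the case $g=9$ the Mukai Grassmannian is the Lagrangian Grassmannian $G = \mathrm{LGr}(3,6) \subset \PP^{13}$ with $D=6$, $\ell = 7$, and $\mathrm{Aut}(G) = \mathrm{PSp}(6)$ of dimension $21$. Since every smooth canonical curve of genus $9$ is again a unique linear section of $G$, every element of $\mathcal{K}(8)$ arises as $L \cap G$ with $L$ a $\PP^7$ in $\PP^{13}$, and the same computation gives
\[
\dim \mathcal{K}(8) \;\leq\; 8 \cdot 6 - 21 \;=\; 27 \;<\; 28 \;=\; \binom{8}{2} \;=\; \dim \mathcal{S}(8),
\]
forcing the strict inclusion. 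The hardest technical step throughout is the generic injectivity of the Mukai parameter map $L \mapsto [L \cap G]$: one must verify that for a generic $L$, the configuration $L \cap G$ determines $L$ up to $\mathrm{Aut}(G)$. This follows from Mukai's uniqueness theorem paired with the rigidity statement that a generic self-dual $X$ is a hyperplane section of a unique canonical curve (the analogue of Conjecture~\ref{conj:alggenus} at higher $g$); in practice, verifying both the lift and the span statement at a single carefully chosen configuration and then propagating by irreducibility is where the substantive work lies.
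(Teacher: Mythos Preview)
Your approach matches the paper's: irreducibility from $\mathcal{M}_g$, the small cases $n\le 5$ from direct constructions, and the cases $n=6,7,8$ from a dimension count on linear sections of the Mukai Grassmannians modulo their automorphism groups. Your explicit computations $60-45=15$, $56-35=21$, $48-21=27$ are exactly the ones underlying Petrakiev's argument that the paper summarizes.

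Two minor slips. First, ``plane quartic'' is only correct for $n=2$; for $n=3$ the lift is to a genus~$4$ canonical curve (a $(2,3)$-complete intersection in $\PP^3$), not to a plane quartic. Second, Mukai's theorems do not assert that \emph{every} smooth canonical curve of genus $7,8,9$ is a linear section of $G$; there are exceptional loci (e.g.\ curves with a $g^2_7$ for $g=8$). This is harmless for the argument, since $\mathcal{K}(n)$ is a closure and only the \emph{general} curve is needed for the dimension count, but your wording should be adjusted.

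One genuine gap to flag: your justification of generic injectivity via ``$L$ is the span of $L\cap G$ in $\PP^N$'' shows that $L$ is determined by $L\cap G$ \emph{as a subset of $\PP^N$}, but the map lands in $\mathcal{S}(n)$, where one only retains the abstract configuration in $\PP^{n-1}$ up to $\mathrm{PGL}(n)$. What is actually needed is that two linear sections $L_1\cap G$ and $L_2\cap G$ which are projectively equivalent as configurations differ by an element of $\mathrm{Aut}(G)$; the span argument alone does not supply this. You correctly identify this as the hardest step, and the paper likewise does not prove it but defers to Petrakiev; just be aware that the sentence as written does not close the gap.
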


The irreducibility of $\mathcal{K}(n)$ follows from the fact that
the moduli space $\mathcal{M}_g$ is irreducible. 
For $2\leq n\leq 5$ the second statement follows from observations in Sections \ref{sec2} and \ref{sec3}.
For $n=6,7 $ and $8$, the second assertion
is based on a dimension argument. In these cases,
the dimension of $\mathcal{S}(n)$ equals $15,21$ and $28$, by Theorem \ref{thm:SOn}.
For the dimension of $\mathcal{K}(n)$, one considers the Mukai Grassmannian
and its symmetries, and one looks at linear sections modulo these symmetries.
Using such an approach, Petrakiev \cite{Pet} shows that
${\rm dim}(\mathcal{K}(6)) =  15$ and ${\rm dim}(\mathcal{K}(7)) = 21$, but
 ${\rm dim}(\mathcal{K}(8)) \leq 27$. 
  He conjectures  ${\rm dim}(\mathcal{K}(8)) = 27$.
  Whenever the dimensions agree, we have
    $\,\mathcal{K}(n) = \mathcal{S}(n)\,$ 
  because both varieties are irreducible.

One might hope to
extend Algorithm \ref{alg:genus6}
from $n=5$ to $n=6,7$. This makes sense because
$\mathcal{S}(n) = \mathcal{K}(n)$ for $n=6,7$.
Indeed, by \cite[Theorem 2.6]{Pet},
every general self-dual configuration
$X$ of $12$ points in  $\PP^5$ can be
written as $X = L \cap G$,
where $G$ is the spinor variety in $\PP^{15}$
and $L \simeq \PP^5$ is a subspace of $\PP^{15}$.
The analogous statement holds for $14$ points in $\PP^6$.
However, a syzygy-based method like Algorithm~\ref{alg:genus6} to find $L$ is currently not known.

The first-principles approach to the lifting problem is to encode it as
a polynomial system.
We shall describe these equations for $n=7$.
Our input is a self-dual configuration $X$ of $14$ points in $\PP^6$.
The task is to write $X$ as a linear section of the Grassmannian $G ={\rm Gr}(2,6)$.
Points on $G$ are skew-symmetric 
$6 \times 6$-matrices $P = (p_{ij})$ of rank two.  Our ansatz for each 
 matrix entry is a linear form $p_{ij} = \sum_{k=0}^6 c_{ijk} x_k$.
The $c_{ijk}$ are unknowns, so we have $15 \times 7 = 105$ unknowns in total.
Consider the $15 = \binom{6}{4}$ subpfaffians of size $4 \times 4$.
We require that these vanish at the $14$ points in $X$.
This gives a system of $210$ quadratic equations in the $105$ unknowns.
One could try to solve this large system of equations numerically.
Every floating point solution $(c_{ijk})$ would represent a
subspace $L \subset \PP^{14}$ such that $X = L \cap  G$.

Some complexity reduction can be achieved by
symmetries. The pfaffian of $P$ is
a cubic hypersurface in $\PP^6$ which is
singular at the $14$ points in $X$.
Our task is to find a Pfaffian cubic with 
prescribed singularities. Now, the hypersurface
is left unchanged when $P$ is replaced by
$U P U^T$ for some $U \in {\rm GL}(6)$.
Using a normal form for this group action,
we can reduce from $105$ unknowns to $ 69$ unknowns.
Still, it is a formidable challenge to
solve the equations.

In this section we discussed linear sections $X$ of canonical curves.
We focused on the regime where the
curves are linear sections of a
 Mukai Grassmannian. Of course, the $X$
  are also linear sections of the
Fano threefolds and K3 surfaces in bullets (a) and (b) above.
In our combinatorial setting, any of these
varieties can be replaced by sufficiently nice degenerations.
We note that each Mukai Grassmannian admits
toric degenerations $G \leadsto  {\rm in}(G) $ in $\PP^N$.
By slicing the resulting  toric variety ${\rm in}(G)$, we  also obtain
self-dual configurations $X = L \cap {\rm in}(G)$ that lie in $\mathcal{K}(n)$.
For one concrete example of a toric degeneration in the $n=8$ case see
 \cite[Figure 3]{Boege}.
Which self-dual matroids of rank $8$
arise from that $6$-dimensional toric variety?

One important degeneration of K3 surfaces needs to be
mentioned here. This is the {\em tangent developable surface} $S_g$
which gained prominence in the recent advances on 
canonical curves described  in \cite{EL}.
Following \cite[Equation (2.2)]{EL}, 
$S_g$ is the surface in $\PP^g$ that is parametrized by
a vector $(u,v)$ times the Jacobian matrix
of $(s^g, s^{g-1} t, s^{g-2} t^2, \ldots, t^g)$.
Hyperplane sections of $S_g$ are rational curves with $g$ cusps
that behave like smooth canonical curves as far
as syzygies go. Therefore, by taking subspaces $L$ of codimension $2$ in $\PP^g$,
we obtain self-dual configurations $X = L \cap S_g$.
Which self-dual matroids arise from such $X$?

\section{Graph curves}
\label{sec5}

In the previous section, we saw that canonical models of
general smooth curves, of moderate genus $g$, are
linear slices of Mukai Grassmannians. In what follows we
turn to the other end of the spectrum, namely we shall study an important
class of reducible curves with many components. These are 
graph curves, which were introduced  by Bayer and Eisenbud in \cite{BE}. 

A {\em graph curve} $C_G$ is a stable canonical curve that consists of
$2g-2$ straight lines in $\PP^{g-1}$. Stability implies that  every
line intersects precisely three other lines. The dual graph $G$ of the curve
$C_G$ is trivalent, so it has $2g-2$ vertices and $3g-3$ edges, and it is 
$3$-connected~\cite[Proposition 2.5(ii)]{BE}. The graph $G$
determines the abstract curve $C_G$. It
specifies a unique point in the Deligne--Mumford
compactification $\overline{\mathcal{M}}_g$ of the 
$(3g-3)$-dimensional moduli space $\mathcal{M}_g$
of smooth curves of genus $g$. In tropical
geometry \cite{Mel}, one assigns lengths to the edges of $G$, and one views these lengths
as local coordinates on the combinatorial space 
${\rm trop}(\mathcal{M}_g)$.

\begin{example}[$g=5$] \label{eq:cubecurve} Let $G$ be the edge graph of
  a $3$-cube. It has $8$ vertices and $12$ edges.
  The graph curve $C_G$ consists of eight lines in $\PP^4$,
  and it is obtained by intersecting three reducible quadrics, each of
  which is the union of two general hyperplanes.
  The choice of six  general hyperplanes in $\PP^4$ is unique, 
  up to projective transformation, and hence so is $C_G$.
\end{example}

This last observation about the projective uniqueness
of the curve holds more generally.
 
\begin{proposition} \label{prop:graphcurveunique}
  Fix any trivalent $3$-connected graph $G$ with $2g-2$ vertices,
  and let ${\rm Cyc}_G$ be any $\,g \times (3g-3)$ matrix whose
  rows span the cycle space of $G$. This matrix has $2g-2$
  linearly dependent triples of columns, and these triples
  span the $2g-2$ lines that make up the graph curve $C_G$. The curve
  $C_G$ is unique up to projective transformations in $\PP^{g-1}$.
\end{proposition}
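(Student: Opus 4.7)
The plan is to extract the $2g-2$ lines of $C_G$ directly from the cycle space, identify the resulting curve with the graph curve via the Bayer--Eisenbud construction, and then deduce uniqueness from the ambiguity in the choice of cycle-space basis.

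First, I would fix an orientation of $G$ and recall that $\dim \mathrm{Cyc}_G = |E(G)|-|V(G)|+1 = (3g-3)-(2g-2)+1 = g$, so $\mathrm{Cyc}_G$ has full row rank. A vector $c \in K^{3g-3}$ lies in the cycle space if and only if for every vertex $v$ the signed sum $\sum_{e \ni v} \pm c_e$ vanishes. Applying this row by row to $\mathrm{Cyc}_G$ yields, at each of the $2g-2$ vertices $v$, a non-trivial linear relation among the three columns indexed by the edges incident to $v$. This produces the $2g-2$ linearly dependent triples.

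Next I would verify that each such triple in fact spans a full $2$-plane, so that its projectivization is a line $\ell_v \subset \PP^{g-1}$. A vanishing column $c_e$ would make $e$ a bridge, and two proportional columns $c_e, c_{e'}$ would mean every cycle-space vector takes proportional values on $e$ and $e'$, i.e.\ $\{e,e'\}$ forms a $2$-edge cut. Both possibilities are excluded by trivalence together with $3$-connectivity, since in a cubic graph $3$-vertex-connectivity is equivalent to the absence of bridges and $2$-edge cuts. For every edge $e = vw$, the column $c_e$ lies in the planes spanned at both $v$ and $w$, so $\ell_v \cap \ell_w = \{[c_e]\}$; hence the union $\bigcup_v \ell_v \subset \PP^{g-1}$ is a connected nodal curve of degree $2g-2$ and arithmetic genus $g$ whose dual graph is $G$. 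Invoking \cite[Proposition 2.5]{BE}, which identifies $H^0(\omega_{C_G})$ with the cycle space of $G$ and realizes the canonical embedding by evaluating a cycle basis at the nodes, one concludes that this union of lines is precisely $C_G$.

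Finally, uniqueness follows because any other $g \times (3g-3)$ matrix $\mathrm{Cyc}_G'$ whose rows span the cycle space satisfies $\mathrm{Cyc}_G' = A \cdot \mathrm{Cyc}_G$ for some $A \in \mathrm{GL}_g(K)$, and this $A$ acts as a projective transformation of $\PP^{g-1}$ sending each column to the corresponding new column, hence carries one realization of $C_G$ onto the other. The main obstacle in the plan is the rank-two claim at each vertex: it is precisely there that the trivalent plus $3$-connected hypothesis enters essentially, through the combinatorial equivalence of $3$-connectivity with the absence of bridges and $2$-edge cuts in cubic graphs.
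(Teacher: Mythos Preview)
Your argument is correct, but it follows a different path from the paper's.

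The paper argues matroid-theoretically: the columns of $\mathrm{Cyc}_G$ realize the bond (cographic) matroid of $G$; since this matroid is regular, its realization is projectively unique by Brylawski--Lucas \cite{BL76}, which immediately gives the uniqueness statement. The dependent triples at each vertex are then identified as the $3$-circuits of the bond matroid, and the lines and their incidences follow. By contrast, you work entirely with elementary linear algebra and graph theory: the vertex relation comes directly from the cycle condition, the rank-two claim is reduced to the absence of bridges and $2$-edge cuts via the $\kappa=\lambda$ equivalence for cubic graphs, and uniqueness is just the $\mathrm{GL}_g$-ambiguity in the choice of basis for the cycle space. Your route is more self-contained and avoids the external projective-uniqueness citation; the paper's route is shorter and situates the result within the matroid framework that pervades the article. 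Both appeal to Bayer--Eisenbud \cite{BE} to match the line configuration with the canonical graph curve.
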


\begin{proof}
  The row space of ${\rm Cyc}_G$ describes a realization of the graphic matroid of $G$. 
    Its dual is the {\em bond matroid} of $G$.
  The columns of
  ${\rm Cyc}_G$ realize the bond matroid of $G$ in $\PP^{g-1}$.  Since both matroids are regular, 
  the realizations given by ${\rm Cyc}_G$ are projectively unique \cite{BL76}. 
  The columns of ${\rm Cyc}_G$ realize the intersection points of the lines in $C_G$. Since
   $G$ is trivalent and $3$-connected, the three edges incident to the same vertex in $G$ form a $3$-cycle in the bond matroid.  So we can build a unique line for each vertex of $G$, and these lines intersect as determined by $G$.
  This proves the claim.  
  See \cite[Theorem 8.1]{BE} for a more algebraic proof.
\end{proof}

\begin{example}[$g=6$] \label{ex:petersenpart1} Let $G$ be the
  {\em Petersen graph}, with vertex set $\{0,1,2,3,4,5,6,7,8,9\}$,
  and edges directed as in Figure \ref{fig:ptersengraph}. 
  Six linearly independent  cycles form the rows of the matrix
  \setcounter{MaxMatrixCols}{20}
  $$ {\rm Cyc}_G \,\, = \,\, \begin{small} \begin{blockarray}{ccccccccccccccc} 12&23&34&45&51&68&80&07&79&96&16&27&38&49&50 \\
      \begin{block}{(ccccccccccccccc)}
        -1& -1& -1& -1& 1& \phantom{-}0& \phantom{-}0& 0& \phantom{-}0& 0& \phantom{-}0& 0& 0& 0& 0 \,\, \\
        \phantom{-} 0& \phantom{-}0& \phantom{-}0& \phantom{-}0& 0& -1& -1& 1& -1& 1& \phantom{-}0& 0& 0& 0 &0\,\,\\
        \phantom{-} 1& \phantom{-}0& \phantom{-}0& \phantom{-}0& 0& -1& -1& 1& \phantom{-}0& 0& -1& 1& 0& 0& 0 \,\,\\
        \phantom{-} 1& \phantom{-}1& \phantom{-}0& \phantom{-}0& 0& -1& \phantom{-}0& 0& \phantom{-}0& 0& -1& 0& 1& 0& 0 \,\,\\
        \phantom{-} 1& \phantom{-}1& \phantom{-}1& \phantom{-}0& 0& -1& -1& 1& -1& 0& -1& 0& 0& 1& 0 \,\,\\
        \phantom{-} 1& \phantom{-}1& \phantom{-}1& \phantom{-}1& 0& -1& -1& 0& \phantom{-}0& 0& -1& 0& 0& 0& 1 \,\,\\
      \end{block}
    \end{blockarray} \end{small}. \vspace{-0.3cm}
  $$
  The last row is the cycle $1{-}2{-}3{-}4{-}5{-}0{-}8{-}6{-}1$.
  The columns of ${\rm Cyc}_G$ give $15$ points in $\PP^5$. Each of the ten lines
  of $C_G\subset \PP^5$ passes through three of the $15$ points. The lines
  are labeled by the vertices $0,1,\ldots,9$ of $G$.
  For instance,
  the line  $L_1$ is spanned by the points $12,51,16$,
  and it satisfies the four linear equations
  $x_2 = 0$ and $x_3 = x_4 = x_5 = x_6$.
  Similarly, line $L_2$ is spanned by the points $12,23,27$,
  and its equations are $x_2=0$ and $-x_1 = x_4 = x_5 = x_6$.
\end{example} 

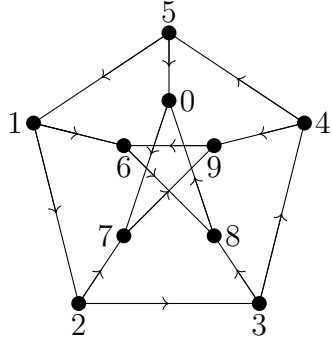
\begin{figure}[h]
    \centering
    \begin{tikzpicture}[scale = 1.2]
      \coordinate (2) at (0,0);
      \coordinate (3) at (2,0);
      \coordinate (4) at (2.5,2);
      \coordinate (5) at (1,3);
      \coordinate (1) at (-0.5,2);
      \coordinate (6) at (0.5,1.75);
      \coordinate (7) at (0.5,0.75);
      \coordinate (8) at (1.5,0.75);
      \coordinate (9) at (1.5,1.75);
      \coordinate (0) at (1,2.25);
      \foreach \i in {1,2,3,4,5,6,7,8,9,0} {
        \path (\i) node[circle, black, fill, inner sep=2]{};
      }
      \draw[black] (0)--(5) -- (4) -- (3) -- (2) -- (1) -- (6) -- (9) -- (7) -- (0) --(8) --(3);
      \draw[black] (9) -- (4) ;
      \draw[black]  (8)--(6) -- (1) --(5);
      \draw[black] (2) --(7);
      \node[left] at (1) {1};
      \node[below] at (3) {3};
      \node[below] at (2) {2};
      \node[right] at (4) {4};
      \node[above] at (5) {5};
      \node[below] at (6) {6};
      \node[right] at (8) {8};
      \node[left] at (7) {7};
      \node[right] at (0) {0};
      \node[below] at (9) {9};
      \draw[->] (1)-- (-0.25,1);
      \draw[->] (2)-- (1,0);
      \draw[->] (3)-- (2.25,1);
      \draw[->] (4)-- (1.75,2.5);
      \draw[->] (5)-- (0.25,2.5);
      \draw[->] (1)-- (0,1.875);
      \draw[->] (2)-- (0.25,0.375);
      \draw[->] (3)-- (1.75,0.375);
      \draw[->] (4)-- (2,1.875);
      \draw[->] (5)-- (1,2.625);
      \draw[->] (0)-- (0.8,1.65);
      \draw[->] (7)-- (1,1.25);
      \draw[->] (9)-- (1,1.75);
      \draw[->] (6)-- (0.85,1.4);
      \draw[->] (8)-- (1.282,1.4);
    \end{tikzpicture}
    \caption{The Petersen graph determines a reducible canonical 
    curve $C$ of genus $6$ in $\PP^5$.}
    \label{fig:ptersengraph}
  \end{figure}

The article \cite{BE}  pays special attention to planar graphs.
By Steinitz's Theorem, a planar trivalent graph $G$ of genus $g$ is the edge graph of a simple $3$-polytope $P$
with $2g-2$ vertices and $g+1$ facets.
Dual to this is a simplical polytope $P^*$ with $g+1$ vertices. The curve $C_G$ is a linear slice
of the face variety of $P^*$, given algebraically by the Stanley--Reisner ideal of $P^*$. 
This is shown in \cite[Section 6]{BE}, which stresses that $\partial P^*$
is a combinatorial model of a K3 surface. This surface should lie between the canonical curve and
a Mukai Grassmannian. The latter is now a degeneration of those in Section \ref{sec4}.
We illustrate this with an example.

\begin{example}[$g=8$] The {\em $3$-dimensional
  associahedron} $P$ is a combinatorial model for the genus $8$ Mukai Grassmannian
   ${\rm Gr}(2,6) \subset \PP^{14}$. Its prime ideal is
  generated by the $15$ quadrics
    $$ \qquad p_{ik} p_{jl} \, = \, p_{ij} p_{kl} \,+ \, p_{il} p_{jk} \qquad {\rm for} \,\,\,
  1 \leq i <j < k < l \leq 6. $$
  This is a reduced Gr\"obner basis with leading terms on the left \cite[Theorem 5.8]{INLA}.
  These  correspond to the {\em crossing diagonals} in a hexagon.
  The ideal $I$ generated by the $15$ monomials is the \emph{Stanley--Reisner ideal} of the polytope dual
  to the associahedron $P$. This appears in
  the ``second proof'' of \cite[Proposition 3.7.4]{AIT}, and is
  well-known in the theory of cluster algebras.
  Slicing the variety of $I$ with a random hyperplane, we obtain a graph curve of genus $8$ in $\PP^7$.
  Its underlying graph $G$ is the edge graph of $P$, which has $14$ vertices, $21$ edges and $9$ facets.
  \end{example}

We now turn to  self-dual configurations and self-dual matroids
arising from graph curves. Let $G$ be a 
$3$-connected trivalent graph with $2g-2$ vertices.
We saw in Proposition  \ref{prop:graphcurveunique}
that $G$ specifies a projectively unique curve $C_G$ consisting
of $2g-2$ lines in $\PP^{g-1}$.
Every hyperplane $H \simeq \PP^{g-2}$ in $\PP^{g-1}$
determines a canonical divisor  $X_G = H \cap C_G$. 
We assume that $H$ is generic, so that $X_G$
is a generic hyperplane section of $C_G$. Thus, $X_G$ is a configuration of $2g-2$ distinct points $ L_i \cap H$, one for each vertex $i$ of $G$. 
In our computations,
we always take $C_G$ and $H$ to be defined over $\QQ$, so the
$2g-2$ points in $X_G$  have coordinates in $\QQ$.

We write $M_G$ for the rank $g-1$ matroid 
on $2g-2$ elements that is determined by the
canonical divisor $X_G$ in $\PP^{g-2}$. 
The following theorem justifies the notation $M_G$.

\begin{theorem} \label{thm:MG}
  The matroid $M_G$ is self-dual and depends only on the graph $G$.
\end{theorem}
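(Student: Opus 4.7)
The plan is to realize $M_G$ as the matroid of a canonical divisor on the nodal curve $C_G$ and invoke Riemann--Roch for Gorenstein curves, combined with the projective uniqueness of $C_G$ established in Proposition~\ref{prop:graphcurveunique}.

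First I would set up the geometry. The graph curve $C_G \subset \PP^{g-1}$ is a stable nodal curve, hence Gorenstein, and its embedding is the canonical embedding by sections of the dualizing sheaf $\omega_{C_G}$. Choose a hyperplane $H$ that avoids the finitely many nodes of $C_G$ and meets $C_G$ transversally; such $H$ form a dense open subset of $(\PP^{g-1})^*$. Then $X_G = H \cap C_G$ is an effective Cartier divisor of degree $2g-2$ on $C_G$, supported at smooth points, and is a canonical divisor by construction.

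Next, for self-duality of $M_G$, I would repeat the Riemann--Roch argument from the introduction in the Gorenstein setting. For any subset $D \subseteq X_G$ with $|D| = g - 1$, Riemann--Roch on $C_G$ gives
\[ \ell(D) - \ell(X_G - D) \,=\, \deg(D) - g + 1 \,=\, 0 . \]
By geometric Riemann--Roch, $\ell(X_G - D)$ equals the number of linearly independent hyperplanes of $\PP^{g-1}$ containing $D$. Because $D \subset H$, we have $\ell(X_G - D) \geq 1$, with equality iff $H$ is the unique hyperplane through $D$, iff the $g-1$ points of $D$ span $H \simeq \PP^{g-2}$, iff $D$ is a basis of the matroid realized by $X_G$. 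Applying the same reasoning to $X_G - D$ and using $\ell(D) = \ell(X_G - D)$, I conclude that $D$ is a basis iff $X_G - D$ is a basis. Hence $M_G = M_G^{\ast}$.

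Finally, dependence only on $G$: by Proposition~\ref{prop:graphcurveunique}, the curve $C_G$ is unique up to projective transformation of $\PP^{g-1}$, and the labeling of its $2g-2$ components by vertices of $G$ is intrinsic. Matroids of labeled point configurations are invariant under projective transformations, so it remains to check independence from the choice of $H$. The matroid strata inside $(\PP^{g-1})^*$ form a constructible stratification cut out by the (non)vanishing of the $(g-1) \times (g-1)$ minors of the parametrized realization matrix; since $(\PP^{g-1})^*$ is irreducible, exactly one such stratum is open and dense, and I take $M_G$ to be the matroid on that generic stratum. This construction uses only combinatorial data from $G$.

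The main obstacle will be handling Riemann--Roch on the nodal curve $C_G$ carefully: one must invoke the Riemann--Roch theorem for Gorenstein curves and verify both that hyperplane sections of the canonical embedding yield canonical divisors and that the geometric interpretation of $\ell(X_G - D)$ as a count of hyperplanes through $D$ persists. Once these standard ingredients are in place, the combinatorial conclusion is immediate from $\ell(D) = \ell(X_G - D)$.
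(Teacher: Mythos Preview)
Your proposal is correct and follows essentially the same route as the paper's own proof: invoke Riemann--Roch on $C_G$ for self-duality, use genericity of $H$ for well-definedness, and cite Proposition~\ref{prop:graphcurveunique} for dependence only on $G$. The paper's proof is a three-sentence sketch of exactly these steps; your version simply fills in the details (Gorenstein property of $C_G$, geometric Riemann--Roch interpretation of $\ell(X_G - D)$, and the constructible-stratification argument for why a generic $H$ yields a well-defined matroid) that the paper leaves implicit.
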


\begin{proof}
Riemann--Roch for  the curve $C_G$  is equivalent to the self-duality of the matroid $M_G$. 
  Since $H$ is generic,
  the matroid is independent of $H$, i.e.~it depends only on the curve $C_G$.
  Proposition~\ref{prop:graphcurveunique} ensures that
  $M_G$ is determined by the graph~$G$.
\end{proof}

\begin{example}[Petersen Graph]\label{ex:petersen}
Fix the graph $G$ in Figure \ref{fig:ptersengraph}.
  We consider the ten $6 \times 3$ submatrices of ${\rm Cyc}_G$, as computed in Example \ref{ex:petersenpart1}, corresponding to the vertices $0,1,\ldots,9$.
  The relevant triples of columns are
  labeled by $[12,51,16], [12,23,27], \ldots, [79,96,49], [80,07,50]$.
  One checks that each of these ten $6 \times 3$ matrices has rank two,
  so its column span is a line in $\PP^5$. Our curve $C_G$ is
  the configuration of these ten lines. We now intersect $C_G$ with
  the hyperplane $H = (6,9,5,1,3,3)^\perp$. We obtain $10$ points in $H \subset \PP^5$.
  We delete the last coordinate. This gives us the canonical divisor as the following configuration of
  ten points:
  \begin{equation}
  \label{eq:petersenmatrix}
   X_G \quad = \quad
  \begin{small} \begin{pmatrix}
      7 & -5 & -1 & -3 & -5 & 0 & 0 & 0 & 0 & 0 \\
      0 & 0 & 0 & 0 & 0 & 14 & -5 & -1 & 1 & 5 \\
      -3 & -3 & 0 & 0 & 0 & -9 & 12 & -1 & 0 & 5 \\
      -3 & 5 & -2 & 0 & 0 & -9 & 0 & 22 & 0 & 0 \\
      -3 & 5 & 1 & 1 & 0 & -9 & -5 & -1 & -3 & 5 \\
    \end{pmatrix}. \end{small}
  \end{equation}
  Among the $\binom{10}{5} = 252$ five-tuples of columns,
  precisely $12$ are nonbases.  The nonbases are  the $12$ five-cycles in the Petersen graph. 
  We encode the matroid by its set of nonbases:
  $$ M_G \,=\, \{
  {12345}, {12368}, {12570}, {12679}, {14569}, {15680}, {23479}, {23780}
  , {34580}, {34689}, {45790}, {67890} \}. $$
  The matroid $M_G$ is self-dual, because
  the complement of every five-cycle in $G$ is a five-cycle.
\end{example}

Theorem \ref{thm:MG} raises the question how 
the matroid $M_G$ can be read off from the graph~$G$.
A purely combinatorial rule is desirable.
In what follows, we present what we know~currently.
The vertices of $G$ encode the $2n$ lines of the curve $C_G$.
We write $L_i$ for the line 
indexed by the vertex $i$ in $G$ and
$p_i = L_i\cap H$ for its intersection point with a generic hyperplane $H$.
These points form a self-dual configuration in $\PP^{n-1}$, and $M_G$
is their self-dual matroid.
Any linear dependency among the points $p_i$ arises
from a dependency among the lines $L_i$.
A set of $k$ lines in $C_G$ is dependent if and only if the span of the lines lies inside a subspace $\PP^{k-1}$.

\begin{lemma}\label{lem:circuits}
The size $k$ circuits of the matroid $M_G$ correspond to sets of $k$ lines in the graph curve $C_G$ 
such that the set spans a $\PP^{k-1}$ but every subset of size $\ell<k$ spans a $\PP^{\ell}$.
\end{lemma}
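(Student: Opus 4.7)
My plan is to establish a ``dimension drops by one'' correspondence between the projective spans of subsets of lines in $C_G \subset \PP^{g-1}$ and the projective spans of the corresponding points $p_i = L_i \cap H$ in $H \simeq \PP^{g-2}$, and then feed this directly into the circuit axiom for $M_G$.

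\emph{Step 1 (setup).} By definition, $M_G$ is the matroid of the configuration $p_1,\ldots,p_{2n}$ with $p_i = L_i \cap H$. A subset $\{p_{i_1},\ldots,p_{i_k}\}$ of size $k$ is a circuit of $M_G$ exactly when those $k$ points are projectively dependent (their projective span has dimension at most $k-2$) while every $(k-1)$-subset is independent (its span has dimension exactly $k-2$).

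\emph{Step 2 (the dimensional identity).} For any subset $S = \{L_{i_1},\ldots,L_{i_k}\}$, write $V_S \subset \PP^{g-1}$ for the projective span of the lines in $S$. Since $H$ is generic, in particular $H$ does not contain any $V_S$, so $V_S \cap H$ is a projective subspace of dimension $\dim V_S - 1$; since each $p_{i_j}$ lies in $V_S \cap H$, one direction
\[
\dim \operatorname{span}(p_{i_1},\ldots,p_{i_k}) \,\leq\, \dim V_S - 1
\]
is immediate. The key content of the step is that equality holds for generic $H$. This is a standard genericity argument: failure of equality for a fixed $S$ says that the $k$ points impose a nontrivial linear relation inside $V_S \cap H$ beyond the relations forced by $V_S$, and this is a proper closed condition on $H$ in the dual projective space of hyperplanes. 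Since there are only finitely many subsets $S$, a sufficiently generic $H$ avoids all their bad loci simultaneously, giving the equality for every~$S$.

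\emph{Step 3 (translation).} Substituting the identity of Step~2 into the circuit condition, the $k$ points are dependent iff $\dim V_S \leq k-1$, and every $\ell$-subset ($\ell < k$) is independent iff the corresponding lines span a projective subspace of dimension exactly $\ell$. Specialising to $\ell = k-1$ forces $\dim V_S \geq k-1$, which together with dependence pins $\dim V_S = k-1$. This recovers the conditions in the lemma: the $k$ lines span $\PP^{k-1}$ and every $\ell$-subset spans $\PP^{\ell}$.

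\emph{Main obstacle.} The technical heart is the genericity claim in Step~2, namely that the slice by $H$ does not create unexpected collinearities among the $p_{i_j}$ beyond those already forced by $V_S$. The cleanest way to settle this is to exhibit, for each individual $S$, at least one hyperplane $H$ for which the points $p_{i_j}$ span all of $V_S \cap H$; the locus of failure is then a proper closed subvariety of $(\PP^{g-1})^{\vee}$, and genericity of $H$ rules it out for all finitely many relevant $S$ at once. This is the same ``sufficiently generic $H$'' hypothesis already in force when $M_G$ was defined via $X_G = C_G \cap H$, so no additional assumption is needed.
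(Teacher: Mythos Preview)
Your overall strategy coincides with the paper's (the paper's proof is a single sentence, essentially ``translate the circuit definition through the line/point correspondence''), but your attempt to make that correspondence precise in Step~2 contains a genuine error.

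The equality
\[
\dim \operatorname{span}(p_{i_1},\ldots,p_{i_k}) \,=\, \dim V_S - 1
\]
that you assert for generic $H$ is simply false, and no genericity can rescue it. Take any two non-adjacent vertices $1,2$ of $G$: the lines $L_1,L_2$ are skew, so $\dim V_{\{1,2\}}=3$, and your formula predicts that the two points $p_1,p_2$ span a $\PP^2$. But two points span at most a $\PP^1$. The issue is obvious once stated: $|S|$ points can span at most a $\PP^{|S|-1}$, so whenever $\dim V_S>|S|$ your equality is impossible. Your ``proper closed condition'' heuristic fails here because the bad locus is all of $(\PP^{g-1})^\vee$.

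This error propagates into Step~3. From ``the $\ell$ points are independent'' you may conclude only $\dim V_T\ge \ell$ (via the easy inequality $\dim\operatorname{span}(p_i)\le\dim V_T-1$), not $\dim V_T=\ell$. And indeed the stronger conclusion is wrong: a $4$-cycle in $G$ gives a size-$4$ circuit of $M_G$ by Lemma~\ref{lemma:cycle-dependent}, yet its two diagonal (non-adjacent) lines are skew and span $\PP^3$, not $\PP^2$. The lemma should therefore be read as ``every proper $\ell$-subset spans \emph{at least} a $\PP^\ell$,'' which is nothing more than the literal translation of ``every proper subset of the points is independent.'' What actually requires a genericity argument is the converse direction of the paper's sentence preceding the lemma: if a set of $k$ lines spans a $\PP^{\geq k}$ (and the same for all its subsets), then for generic $H$ the $k$ points are independent. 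That is the statement you need to justify, and it is not the one you wrote down.
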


\begin{proof}
  This follows directly from the description of $M_G$ as the matroid of the points $p_i$.
\end{proof}

We next discuss the combinatorial identification of some circuits in $M_G$.

\begin{lemma} \label{lemma:cycle-dependent}
  Every $k$-cycle in the graph $G$ gives rise to a dependent set of size $k$ in 
  the matroid $M_G$. In particular, a minimal $k$-cycle of $G$ gives a circuit of $M_G$.
\end{lemma}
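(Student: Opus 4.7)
The plan is to use Lemma \ref{lem:circuits} to convert the claim into a statement about projective dimensions of spans of lines in the graph curve $C_G$, and then carry out an incremental dimension count that exploits the one-dimensionality of each line together with the fact that consecutive cycle-lines meet at nodes of $C_G$.

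First I would fix a $k$-cycle with vertices $v_1, v_2, \ldots, v_k$ in cyclic order in $G$, together with the corresponding lines $L_{v_1}, \ldots, L_{v_k} \subset \PP^{g-1}$. Starting from $L_{v_1}$ (projective dimension $1$) and adjoining the remaining lines one by one in cyclic order, each new line $L_{v_i}$ for $1 < i < k$ meets the current span in at least the node it shares with $L_{v_{i-1}}$, so the projective dimension grows by at most $1$. After $k-1$ such additions the span has dimension at most $1+(k-2)=k-1$. The last line $L_{v_k}$ meets the current span in two distinct points, one on $L_{v_{k-1}}$ and one on $L_{v_1}$ (distinct nodes of $C_G$), so since two points determine a line, $L_{v_k}$ lies entirely in the span, which therefore still has dimension $\leq k-1$. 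Intersecting with the generic hyperplane $H$ drops the dimension by one, so the $k$ points $p_{v_i} = L_{v_i} \cap H$ all lie in a subspace of $\PP^{g-2}$ of dimension at most $k-2$, proving they form a dependent set of size $k$ in $M_G$.

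For the second assertion I would interpret ``minimal $k$-cycle'' as a chordless (induced) cycle of $G$. Then any proper subset of $\ell < k$ vertices of the cycle induces in $G$ a disjoint union of $c \geq 1$ arcs, hence a forest of intersections among the corresponding $\ell$ lines, whose $\ell - c$ intersection nodes are all distinct. An analogous incremental count, now with no cyclic closure to exploit, shows that the projective span of these $\ell$ lines has dimension exactly $\ell + c - 1 \geq \ell$: each arc of $a_i$ lines spans a $\PP^{a_i}$, and adjoining a new disjoint arc to the current span raises the dimension by $a_i + 1$. Intersecting with $H$ gives a subspace of dimension at least $\ell - 1$, and since $\ell$ points span at most a $\PP^{\ell - 1}$, for generic $H$ the points $p_{v_i}$ span exactly $\PP^{\ell - 1}$ and are therefore linearly independent. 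Hence the $k$-point set is an inclusion-minimal dependent set in $M_G$, i.e., a circuit.

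The main delicate point will be justifying these ``generic'' dimension statements: one must guarantee that the lines of $C_G$ are in sufficiently general position for each incremental count to attain the claimed upper bound, and that $H$ meets every relevant subspace transversally. Here I would invoke the projective uniqueness of $C_G$ (Proposition \ref{prop:graphcurveunique}) together with the independence of $M_G$ from the choice of $H$ (Theorem \ref{thm:MG}): only finitely many subspaces are spanned by subsets of the lines, so transversality is a Zariski-open condition on $H$, and working with a single sufficiently generic hyperplane suffices.
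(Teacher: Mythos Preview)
Your proof is correct and follows the same approach as the paper: an incremental span count along the cycle shows the $k$ lines lie in a $\PP^{k-1}$ (hence the points are dependent), and for a minimal cycle one checks that every proper subset of $\ell$ lines spans at least a $\PP^{\ell}$ and invokes Lemma~\ref{lem:circuits}. Your arc-decomposition claim that disconnected proper subsets span \emph{exactly} $\PP^{\ell+c-1}$ is stronger than what the paper asserts or what is needed, and would require its own justification (two disjoint arcs' spans could meet nontrivially in $\PP^{g-1}$); the argument only requires span $\geq \PP^{\ell}$, and your forest count already delivers that as a lower bound.
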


\begin{proof}
Let $v_1,\ldots,v_k$ be vertices of $G$ that form a  $k$-cycle.
For the corresponding lines,
   $L_i$ intersects $L_{i+1}$ for $1\leq i < k$, and $L_k$ intersects $L_1$. The span of $L_1,\dots,L_{k-1}$ lies 
   in some $\PP^{k-1}$. Since $L_k$ intersects both $L_1$ and $L_{k-1}$, the line $L_k$ must lie in the span of $L_1$ and $L_{k-1}$. Hence, $L_1,\dots,L_{k}$ all lie in the same $\PP^{k-1}$,
 and their intersection points $p_1,\ldots,p_k$ with the hyperplane $H$ lie in
     some $\PP^{k-2}\cong H \cap \PP^{k-1}$. Thus, the points form a dependent set in $M_G$.
  
  Given a minimal $k$-cycle, any subset of $\ell<k$ vertices represent $\ell$ lines spanning a $\PP^{\ell}$. By Lemma~\ref{lem:circuits}, the corresponding points in the hyperplane section form a circuit of $M_G.$
\end{proof}

\begin{lemma}\label{lem:3cycles}
  Circuits of size $3$ in $M_G$ are in 1-to-1 correspondence with $3$-cycles in~$G$.
\end{lemma}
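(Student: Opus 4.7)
My plan is to prove both inclusions of the claimed bijection, relying on the two previous lemmas.

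For the forward direction, a $3$-cycle $\{i,j,k\}$ in $G$ is automatically minimal (there is no shorter subcycle), so Lemma~\ref{lemma:cycle-dependent} immediately produces a $3$-circuit $\{p_i, p_j, p_k\}$ of $M_G$. For the reverse direction, I would take any $3$-circuit $\{p_i, p_j, p_k\}$ of $M_G$ and apply Lemma~\ref{lem:circuits} with $k=3$. This tells me that the lines $L_i, L_j, L_k$ together span a $\PP^{2}$ in $\PP^{g-1}$, and moreover that each pair of them spans a $\PP^{2}$. The latter is equivalent to saying each pair of distinct lines is coplanar, and in projective space two coplanar distinct lines must meet. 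Hence the three lines pairwise intersect. Since $C_G$ is a stable nodal curve whose nodes are indexed exactly by the edges of its dual graph $G$, any intersection $L_a \cap L_b$ forces $ab$ to be an edge of $G$. Therefore $ij, jk, ik \in E(G)$, and $\{i,j,k\}$ is a $3$-cycle.

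Finally, the two constructions are inverse to each other: the map sending a $3$-cycle $\{i,j,k\}$ to the circuit $\{p_i,p_j,p_k\}$ is clearly injective since distinct vertex triples yield distinct triples of lines and hence distinct hyperplane points, and the two implications above show that its image is exactly the set of $3$-circuits.

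The step that requires genuine care is the implication ``$L_a \cap L_b \neq \emptyset \Rightarrow ab \in E(G)$,'' which is not a matroid statement but a structural fact about graph curves. It should follow from the construction in Proposition~\ref{prop:graphcurveunique}: each line $L_v$ is the projective span of the three columns of ${\rm Cyc}_G$ indexed by the edges incident to $v$, and for non-adjacent vertices $a,b$ the two column triples are disjoint, so in the projectively unique realization the lines $L_a$ and $L_b$ are skew. Equivalently, since $C_G$ is by definition the stable nodal curve whose dual graph is $G$, its $3g-3$ nodes are exactly the points $p_{ab}$ for $ab \in E(G)$, with no ``accidental'' intersections. Once this is in hand, the proof of the lemma is complete.
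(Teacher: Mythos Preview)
Your proof is correct and follows essentially the same approach as the paper's: use Lemma~\ref{lemma:cycle-dependent} for one direction, and for the other, conclude via Lemma~\ref{lem:circuits} that the three lines lie in a common $\PP^2$, hence meet pairwise, hence correspond to a $3$-cycle in $G$. The paper's version is terser and simply asserts that the lines are coplanar and that pairwise intersection forces a $3$-cycle; your added justification for the step ``$L_a \cap L_b \neq \emptyset \Rightarrow ab \in E(G)$'' (via the nodal structure of $C_G$) is a worthwhile clarification that the paper leaves implicit.
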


\begin{proof}
  Lemma \ref{lemma:cycle-dependent} implies that $3$-cycles in $G$ are circuits in $M_G$.
For the converse, suppose that $\{i,j,k\}$ is a circuit in $M_G$.
Then the corresponding lines 
   $L_i, L_j$, and $L_k$ are coplanar. So, they must intersect pairwise, and thus come
    from a $3$-cycle in the graph $G$.
\end{proof}

The next two propositions identify  some dependent sets in $M_G$ that are less obvious in~$G$.

\begin{proposition} \label{prop:CD}
  Let $G$ be a $3$-connected trivalent graph and
  $M_G$ its associated matroid. Let $C$ be a subset of
  $V = \{1,2,\ldots,2g-2\}$  such that, for some
  proper subset $D$ of $V \backslash C$,
  the induced subgraph on $C \cup D$ has 
  genus at least $|D| + 1$. Then $C$ is a dependent set in $M_G$.
\end{proposition}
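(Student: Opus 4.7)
My plan is to bound the matroid rank of $V \setminus (C \cup D)$ from above using the hypothesis, then transfer this bound to $C \cup D$ via the self-duality of $M_G$ proved in Theorem~\ref{thm:MG}, and finally conclude by monotonicity that $C$ itself is dependent.

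The first task is to establish a general rank bound: for any $S \subseteq V$,
\[
\mathrm{rank}_{M_G}(S) \ \leq \ g - 1 - b_1\bigl(G|_{V \setminus S}\bigr).
\]
This follows from the realization of the lines $L_i$ as column spans of $\mathrm{Cyc}_G$ in Proposition~\ref{prop:graphcurveunique}. The span of $\{L_i : i \in S\}$ in $K^g$ is the column span of $\mathrm{Cyc}_G$ on edges with at least one endpoint in $S$. Its rank equals $g$ minus the dimension of the cycles supported on the remaining edges, which are precisely the cycles of the induced subgraph $G|_{V \setminus S}$; that dimension is $b_1(G|_{V \setminus S})$. Cutting with the generic hyperplane $H$ subtracts one more dimension, and the points $\{p_i : i \in S\}$ sit inside this intersection, giving the stated upper bound on the matroid rank.

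Applying the bound with $S = V \setminus (C \cup D)$, and using the hypothesis $b_1(G|_{C \cup D}) \geq |D|+1$, yields
\[
\mathrm{rank}_{M_G}\bigl(V \setminus (C \cup D)\bigr) \ \leq \ g - 1 - b_1(G|_{C \cup D}) \ \leq \ g - |D| - 2.
\]
Since $M_G$ is self-dual of rank $g-1$ on a ground set of size $2g-2$, the standard identity $\mathrm{rank}_{M_G}(A) = |A| + \mathrm{rank}_{M_G}(V \setminus A) - (g-1)$ applied to $A = C \cup D$ gives
\[
\mathrm{rank}_{M_G}(C \cup D) \ = \ (|C|+|D|) - (g-1) + \mathrm{rank}_{M_G}\bigl(V \setminus (C \cup D)\bigr) \ \leq \ |C| - 1.
\]
Monotonicity of matroid rank then forces $\mathrm{rank}_{M_G}(C) \leq \mathrm{rank}_{M_G}(C \cup D) \leq |C|-1 < |C|$, so $C$ is dependent.

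The only nontrivial geometric input is the rank bound in the first step; everything after is formal matroid manipulation. The hypothesis that $D$ is a \emph{proper} subset of $V \setminus C$ is exactly what keeps the conclusion meaningful: it guarantees that $V \setminus (C \cup D)$ is nonempty, and together with the $3$-connectivity of $G$ this forces $b_1(G|_{C \cup D}) \leq g-2$ and hence $|D| \leq g-3$, so that the derived upper bound on $\mathrm{rank}_{M_G}(C)$ is strictly below $|C|$.
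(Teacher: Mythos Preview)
Your proof is correct, but the route is genuinely different from the paper's. The paper bounds the span of the lines $\{L_i : i \in C \cup D\}$ \emph{directly}: starting from Lemma~\ref{lemma:cycle-dependent}, it argues (by induction on the genus $g'$ of the induced subgraph) that $g'$ independent cycles force the $|C|+|D|$ lines into a $\PP^{|C|+|D|-g'}$, then slices with $H$. You instead bound the rank on the \emph{complement} $V\setminus(C\cup D)$ via the exact column-rank formula for $\mathrm{Cyc}_G$, and then invoke the self-duality of $M_G$ (Theorem~\ref{thm:MG}) together with the dual-rank identity to transfer the bound back to $C\cup D$. Both arguments land on the same inequality $\mathrm{rank}_{M_G}(C\cup D)\le |C|+|D|-b_1(G|_{C\cup D})$. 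The paper's argument is more self-contained (it does not need Theorem~\ref{thm:MG}, which rests on Riemann--Roch), while yours has the advantage that the rank bound in your first step is a clean general statement derived rigorously from the cycle matrix, and it makes the self-duality theorem do actual work. One small remark: your Step~1 bound requires $S\neq\emptyset$ (otherwise the ``subtract one for $H$'' step breaks down, as you would be intersecting the empty projective space with $H$); this is precisely where the hypothesis that $D$ is a \emph{proper} subset of $V\setminus C$ enters, so your final paragraph identifies the right reason even if the phrasing about $3$-connectivity is not strictly needed.
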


\begin{proof}
  By Lemma \ref{lemma:cycle-dependent}, given a $k$-cycle in $G$, the span of the corresponding $k$ lines in the graph curve lies inside a $\PP^{k-1}$. 
  Let $|C| = m$ and $|D| = n$. The fact that the subgraph induced by $C \cup D$ has genus $g' \geq n + 1$ means that there are $g'$ linearly independent cycles in $C \cup D$. By induction on $g'$, the span of the $m+n$ lines in $C_G$ that correspond to $C \cup D$ lies inside a $\PP^{m+n - g'} \subseteq \PP^{m-1}$. When we intersect these $m+n$ lines with a hyperplane $H$, we get $m+n$ points that lie inside $\PP^{m-1} \cap H \cong \PP^{m-2}$. In particular, the $m$ points corresponding to $C$ lie in this $\PP^{m-2}$. So, they are linearly dependent, and $C$ is indeed a dependent set of $M_G$.
\end{proof}

\begin{proposition}
  \label{prop:containment1}
  Suppose that a minimal $\ell$-cycle $C$ of the graph $G$ intersects a minimal $k$-cycle $D$ in $\ell-1$ vertices. 
  Then any subset of $k$ vertices in $C\cup D$ is dependent in $M_G$.
\end{proposition}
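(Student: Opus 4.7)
The plan is to reduce the statement to Proposition \ref{prop:CD} by a careful choice of the auxiliary set $D'$. Let $C' \subseteq C \cup D$ be an arbitrary $k$-subset. By inclusion--exclusion, $|C \cup D| = \ell + k - (\ell-1) = k+1$, so setting $D' := (C \cup D) \setminus C'$ leaves exactly one vertex, i.e.\ $|D'| = 1$. With this choice, the desired conclusion would follow from Proposition \ref{prop:CD} provided that the induced subgraph $H := G[C \cup D]$ has genus at least $|D'| + 1 = 2$.

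For the genus bound, I would count vertices and edges. Clearly $|V(H)| = k + 1$, and $H$ contains all edges of $C$ and of $D$, of which there are $\ell + k$ counted with multiplicity. Let $s$ denote the number of edges common to $C$ and $D$; then $|E(H)| \geq \ell + k - s$. The key observation is that the unique vertex $v \in C \setminus D$ is incident to exactly two edges of $C$, and neither of these can lie in $D$ since $v \notin V(D)$. Hence $s \leq \ell - 2$, giving $|E(H)| \geq k + 2$. The subgraph $H$ is connected because $C$ and $D$ are each connected and share $\ell - 1 \geq 2$ vertices, so the cyclomatic number of $H$ is at least $(k+2) - (k+1) + 1 = 2$, as required.

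There is no real obstacle here; the entire argument hinges on recognizing that the hypothesis $|C \cap D| = \ell - 1$ is precisely tailored to make $|C \cup D|$ equal to $k+1$, so that the single excess vertex can serve as $D'$, and simultaneously to force two edges of $C$ (the two incident to the unique vertex outside $D$) to be distinct from the edges of $D$, pushing the genus up to $2$. Once these two observations are combined, Proposition \ref{prop:CD} delivers the dependence of $C'$ in $M_G$ immediately.
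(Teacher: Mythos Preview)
Your reduction to Proposition~\ref{prop:CD} is correct and gives a genuinely different proof from the paper's. The paper argues geometrically: by minimality, the points of $C$ span a $\PP^{\ell-2}$ called $P_C$ and those of $D$ span a $\PP^{k-2}$ called $P_D$; the $\ell-1$ shared points are independent (again by minimality) and so span all of $P_C$, forcing $P_C\subseteq P_D$; hence all $k{+}1$ points of $C\cup D$ lie in $P_D\cong\PP^{k-2}$ and any $k$ of them are dependent. Your argument replaces this span-containment step by an edge count showing that $G[C\cup D]$ has cyclomatic number at least $2$, which feeds directly into Proposition~\ref{prop:CD} with a one-element auxiliary set. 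A nice by-product is that your proof never uses the minimality hypothesis on $C$ or $D$: the edge bound $s\le \ell-2$ only needs that $C$ and $D$ are cycles sharing $\ell-1$ vertices, so you have actually proved a slightly stronger statement. By contrast, the paper's argument relies on minimality in an essential way, to know the exact dimensions of $P_C$ and of the span of the $\ell-1$ shared points.

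One small point to tidy up: Proposition~\ref{prop:CD} requires $D'$ to be a \emph{proper} subset of $V\setminus C'$, which fails if $C\cup D=V$. But in that case $k=2g-3\ge g>g-1=\mathrm{rank}(M_G)$, so every $k$-subset is automatically dependent and the conclusion is trivial. You should mention this edge case explicitly.
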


\begin{proof} The set $\{p_i : i \in C\}$
spans a projective space $P_C$ of dimension $\leq \ell - 2$,
while $\{p_j : j \in D\}$ spans a projective
space of dimension $\leq k-2$.
Since the two circuits intersect in $\ell-1$ vertices,  $P_C \cap P_D$ contains $\ell-1$ 
points $p_i$. Because $C$ and $D$ are minimal cycles, these span an $(\ell-2)$-dimensional projective space. This has to coincide with $P_C$. It follows that $P_C\subseteq P_D$. Hence, any $k$ points  from $C\cup D$ are in $P_D$. So, they are
 dependent in $M_G$.
\end{proof}

 Proposition \ref{prop:containment1} can be used to prove dependencies in $M_G$ that are not obvious 
 from $G$. More generally, for a set of cycles in $G$, we can analyze their intersection behavior and
 draw conclusions about the dependencies among the lines indexed by these cycles.
 From this we can infer dependent sets of $M_G$.  
 The procedure is illustrated in the following example.

\begin{example}\label{ex:ralucagraph}
  Let $G$ be the graph for $g=6$ on the left of Figure \ref{fig:ralucalgraph}. 
  The rank $5$ matroid $M_G$ has $162$ bases out of the $252$ quintuples.
  A computation shows that the set $C = \{2,3,8,9\}$ is a circuit of $M_G$. However,
  $C$ does not satisfy the condition in Proposition \ref{prop:CD}.
  There is no complementary set $D$ such that $C \cup D$ induces
  a subgraph of genus $\geq |D|+1$.
  
  Let $P_{0,1,4}$ and $P_{5,6,7}$ be the planes through the lines $L_0,L_1,L_4$ and $L_5,L_6,L_7$,  guaranteed by Lemma~\ref{lem:3cycles}. The $\PP^3$ formed by $L_0,L_1,L_2,L_3$ intersects the $\PP^4$ formed by
  $L_0,L_3,L_8,L_9,L_4$ in $P_{0,1,4}$ and in $L_3$ not on that plane.
  By Proposition \ref{prop:containment1}, this $\PP^3$ is contained in the~$\PP^4$. Symmetry implies the same containment for the other half of the graph. The lines $L_2,L_3,L_8,L_9$ lie in the intersection of the two $\PP^4$s, so they are contained in a space of dimension $\leq 3$.
  \end{example}

\vspace{-0.2cm}
\begin{figure}[h] \begin{subfigure}[b]{.45\textwidth}\centering
    \begin{subfigure}[c]{.45\textwidth}\centering
      \begin{tikzpicture}[scale = 0.6]
        \coordinate (6) at (0,0);
        \coordinate (7) at (0,2);
        \coordinate (2) at (0,4);
        \coordinate (1) at (0,6);
        \coordinate (8) at (1,2);
        \coordinate (3) at (1,4);
        \coordinate (5) at (2,0);
        \coordinate (9) at (2,2);
        \coordinate (4) at (2,4);
        \coordinate (0) at (2,6);
        \foreach \i in {1,2,3,4,5,6,7,8,9,0} {
          \path (\i) node[circle, black, fill, inner sep=2]{};
        }
        \draw[black] (0)--(1) -- (2) -- (7) -- (6) -- (5) -- (9) -- (8) -- (3) -- (2);
        \draw[black] (3) -- (0) -- (4) ;
        \draw[black]  (6) -- (8) ;
        \draw[black,] (5) to[out=-110,in=-150, distance=3.25cm] (7) ;
        \draw[black] (1) to[out=70,in=30,distance=3.25cm]  (4) --(9);
        \node[left] at (1) {1};
        \node[right] at (3) {3};
        \node[left] at (2) {2};
        \node[right] at (4) {4};
        \node[right] at (5) {5};
        \node[below] at (6) {6};
        \node[left] at (8) {8};
        \node[left] at (7) {7};
        \node[above] at (0) {0};
        \node[right] at (9) {9};
      \end{tikzpicture}
    \end{subfigure}\quad
    \begin{subfigure}[c]{.45\textwidth}\centering
      \begin{tikzpicture}[scale = 0.6]
        \coordinate (5) at (0,0);
        \coordinate (8) at (0,2);
        \coordinate (3) at (0,4);
        \coordinate (1) at (0,6);
        \coordinate (6) at (1,-1);
        \coordinate (0) at (1,7);
        \coordinate (7) at (2,0);
        \coordinate (9) at (2,2);
        \coordinate (2) at (2,4);
        \coordinate (4) at (2,6);
        \foreach \i in {1,2,3,4,5,6,7,8,9,0} {
          \path (\i) node[circle, black, fill, inner sep=2]{};
        }
        \draw[black] (4)--(0) -- (1) -- (4) -- (2) -- (3) -- (8) -- (9) -- (7) -- (5) -- (6)--(7);
        \draw[black] (3) -- (1) ;
        \draw[black] (2) -- (9) ;
        \draw[black] (5) -- (8) ;
        \draw[black] (0) to[out=20, in=-20, distance = 4cm] (6) ;
        \node[left] at (1) {1};
        \node[left] at (3) {3};
        \node[right] at (2) {2};
        \node[right] at (4) {4};
        \node[left] at (5) {5};
        \node[below] at (6) {6};
        \node[left] at (8) {8};
        \node[right] at (7) {7};
        \node[above] at (0) {0};
        \node[right] at (9) {9};
      \end{tikzpicture}
    \end{subfigure}
    \caption{Two graphs for a matroid with $162$ bases.}\label{fig:ralucalgraph}
  \end{subfigure}
  \begin{subfigure}[b]{.45\textwidth}
    \begin{subfigure}[c]{.45\textwidth}
      \begin{tikzpicture}[scale = 0.65]
        \coordinate (4) at (0,0);
        \coordinate (3) at (0,2);
        \coordinate (2) at (0,4);
        \coordinate (9) at (-1,2);
        \coordinate (5) at (1,-1);
        \coordinate (1) at (1,5);
        \coordinate (6) at (2,0);
        \coordinate (7) at (2,2);
        \coordinate (0) at (2,4);
        \coordinate (8) at (1,2);
        \foreach \i in {1,2,3,4,5,6,7,8,9,0} {
          \path (\i) node[circle, black, fill, inner sep=2]{};
        }
        \draw[black] (6) -- (4)--(5) -- (6) -- (7) -- (3) -- (2) -- (0) -- (1) -- (2) ;
        \draw[black] (0) -- (7) ;
        \draw[black] (3) -- (4) ;
        \draw[black] (9) to[out=60, in=120, distance = 1cm] (8) ;
        \draw[black] (9) to[out=90, in=160, distance = 2.5cm] (1) ;
        \draw[black] (5) to[out=-160, in=-90, distance = 2.5cm] (9) ;
        \node[above] at (1) {1};
        \node[left] at (3) {3};
        \node[left] at (2) {2};
        \node[left] at (4) {4};
        \node[below] at (5) {5};
        \node[right] at (6) {6};
        \node[below] at (8) {8};
        \node[right] at (7) {7};
        \node[right] at (0) {0};
        \node[left] at (9) {9};
      \end{tikzpicture}
    \end{subfigure}\quad
    \begin{subfigure}[c]{.45\textwidth}
      \begin{tikzpicture}[scale = 0.6]
        \coordinate (1) at (0,0);
        \coordinate (9) at (0,2);
        \coordinate (3) at (0,4);
        \coordinate (4) at (0,6);
        \coordinate (2) at (1,-1);
        \coordinate (6) at (1,7);
        \coordinate (0) at (2,0);
        \coordinate (8) at (2,2);
        \coordinate (7) at (2,4);
        \coordinate (5) at (2,6);
        \foreach \i in {1,2,3,4,5,6,7,8,9,0} {
          \path (\i) node[circle, black, fill, inner sep=2]{};
        }
        \draw[black] (0)--(2) -- (1) -- (0) -- (8) -- (7) -- (5) -- (4) -- (6) -- (5);
        \draw[black] (4) --(3) -- (9)-- (1);
        \draw[black] (3) -- (8) ;
        \draw[black] (7) -- (9) ;
        \draw[black] (6) to[out=20, in=-20, distance = 4cm] (2) ;
        \node[left] at (1) {1};
        \node[left] at (3) {3};
        \node[below] at (2) {2};
        \node[left] at (4) {4};
        \node[right] at (5) {5};
        \node[above] at (6) {6};
        \node[right] at (8) {8};
        \node[right] at (7) {7};
        \node[right] at (0) {0};
        \node[left] at (9) {9};
      \end{tikzpicture}
    \end{subfigure}
    \caption{Two graphs for a matroid with $180$ bases.}
  \end{subfigure}
  \caption{Two non-isomorphic graphs 
  $G$ and $G'$ can give the same matroid $M_G = M_{G'}$.}\label{fig:diffgraphs}
\end{figure}

The two propositions  allow us to infer many dependencies in the
matroid $M_G$. But, they do not yet give an algorithm for
deriving $M_G$ from $G$.
Example \ref{ex:ralucagraph} shows that this derivation is not easy.
The best method we know is to  slice the
graph curve $C_G$ with  a generic hyperplane. It remains an open problem
  to give a combinatorial rule for determining 
  the matroid $M_G$  from the graph $G$.
  Such a rule should show directly that $M_G$ is self-dual.

The main result of this section is a comprehensive
study of all graph curves with $g \leq 7$. The following theorem
summarizes our findings. The data with many details, including
realizations of the matroids $M_G$ for all relevant graphs $G$, can be found at \cite{MathRepo}.

\begin{theorem} \label{thm:graphcurves4567}
  For genus $g=4,5,6,7$, there are respectively $\,2,4,14,57$ distinct graph curves $C_G$.
  The underlying $3$-connected trivalent graphs $G$ yield $2,4,12,45$ distinct self-dual matroids $M_G$ of
  ranks $3,4,5,6$.
  A more detailed summary is given in the discussion below.
\end{theorem}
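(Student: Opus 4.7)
The statement is a computational classification, so the plan is to enumerate on the graph side, produce the matroid $M_G$ for each graph, and then collapse the list under matroid isomorphism.

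First, I would enumerate the $3$-connected trivalent graphs on $2g-2$ vertices for $g=4,5,6,7$, i.e.\ on $6,8,10,12$ vertices. This is a standard enumeration which can be carried out using \texttt{nauty}/\texttt{geng} (filtering the cubic graphs by $3$-connectivity), or looked up in the standard cubic graph census. The expected counts are $2,4,14,57$, matching the first half of the statement about the number of distinct graph curves $C_G$; this identification is legitimate because Proposition~\ref{prop:graphcurveunique} shows that $G$ determines $C_G$ up to projective transformation, and conversely $G$ is recovered from $C_G$ as its dual graph.

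Second, for each graph $G$ on the list, I would construct the matroid $M_G$ by the recipe made explicit in Example~\ref{ex:petersenpart1}: pick a $g \times (3g-3)$ cycle matrix ${\rm Cyc}_G$, read off the $2g-2$ triples of columns corresponding to the vertices of $G$ to obtain the lines $L_i \subset \PP^{g-1}$ of $C_G$, choose a generic hyperplane $H$ defined over $\QQ$, and record $X_G = H \cap C_G$ as a $(g-1) \times (2g-2)$ matrix whose columns are the points $p_i = L_i \cap H$. The matroid $M_G$ is then read off as the pattern of vanishing maximal minors of this matrix. Theorem~\ref{thm:MG} guarantees that $M_G$ is self-dual and independent of the generic choice of $H$; in practice I would sanity check this by repeating the computation with a second random $H$ and verifying that the same matroid is obtained, and by checking self-duality of the resulting set of bases directly.

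Third, I would sort the resulting list of matroids into isomorphism classes, using an existing matroid isomorphism routine (for instance in \texttt{OSCAR} or \texttt{Sage}). The claim is that this yields $2,4,12,45$ distinct classes in ranks $3,4,5,6$ respectively, so that matroid coincidences appear only starting in genus $6$. Figure~\ref{fig:diffgraphs} already exhibits examples showing that the map $G \mapsto M_G$ is not injective, which accounts for the drops $14 \to 12$ and $57 \to 45$.

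The main obstacle is not the individual computations but the assurance of \emph{genericity}: for each graph $G$, one must be confident that the chosen $H$ is generic enough that the resulting matroid is the ``true'' matroid $M_G$ attached to $G$, rather than an artificial specialization. I would address this by (i) sampling $H$ over $\QQ$ with coordinates on a large random grid and comparing matroids across several samples, and (ii) where the combinatorial tools of Lemmas~\ref{lemma:cycle-dependent}--\ref{lem:3cycles} and Propositions~\ref{prop:CD}--\ref{prop:containment1} apply, verifying that the dependencies they predict are present as circuits of the computed $M_G$. A secondary bookkeeping issue is that, for $g=7$, the enumeration involves $57$ graphs on $12$ vertices and matroids of rank $6$ on $12$ elements, which is near the edge of what the isomorphism tests handle briskly, but this is a matter of implementation rather than of mathematical content. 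The detailed tables are collected at \cite{MathRepo}.
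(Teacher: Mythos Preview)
Your proposal is correct and follows essentially the same approach as the paper's own proof, which is likewise a computational ``proof and discussion'': enumerate the $3$-connected trivalent graphs (the paper takes them from the standard cubic graph census), slice each graph curve $C_G$ with a generic rational hyperplane to extract $M_G$, and then sort the resulting matroids up to isomorphism, tabulating basis counts and noting the non-injective cases. Your explicit attention to certifying genericity of $H$ by resampling is a useful practical addition, but the underlying strategy is the same.
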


\begin{proof}[Proof and discussion]
For genus $g$, the matroid has rank $g-1$.
We consider the four~cases.
The graphs are  from
\url{https://en.wikipedia.org/wiki/Table_of_simple_cubic_graphs}.

$\mathbf{g=4}.$ There are two trivalent 3-connected graphs on 4 vertices. They realize the two self-dual matroids of rank three:
the uniform matroid and the one with two disjoint nonbases.

\smallskip
$\mathbf{g=5.}$ \label{ex:graphs4}
We have four graphs $G$ with eight vertices, three planar and one non-planar. 
They are shown in  Figure \ref{fig:graphs8}.
Their rank $4$ matroids $M_G$ have
$54, 60, 64, 66 $ bases.  These matroids are called 
\texttt{4.16.a}, \texttt{4.10.b},  \texttt{4.6.a} and \texttt{4.4.a}
in  Table \ref{tab:rank4} and Figure \ref{fig:graphs8}.
These data appeared in Section~\ref{sec3}, where we
realized self-dual matroids  by
canonical curves. See Remark \ref{rem:dreifuenf}.

\smallskip
$\mathbf{g=6.}$\label{ex:graphsgenus6}
There are $14$ trivalent, 3-connected graphs $G$ of genus $6$. They induce $12$ distinct matroids $M_G$.
These  have the following numbers of bases,  out of $252 = \binom{10}{5}$
possible 5-tuples:
\begin{center}
  \begin{tabular}{c|cccccccccc}
    no. bases & $ 162 $& $180$&$ 192$&$ 198$&$ 200$ & $210$ & $212$ &$222$ & $228$& $240$\\ \hline 
    no. matroids & 2 & 2 & 1 & 1 & 1 & 1 & 1 & 1 & 1 & 1
  \end{tabular}
\end{center}
Here we observe for the first time that two non-isomorphic graphs can 
give the same matroid.
The two pairs of genus six graphs 
with this property are depicted in Figure \ref{fig:diffgraphs}.

\smallskip
$\mathbf{g=7.}$ 
The $57$ trivalent, 3-connected graphs yield
 $45$ distinct self-dual matroids.
Table \ref{tab:genus7} shows their numbers of bases, out of $924 = \binom{12}{6}$, and the distribution 
of the matroids.  
\end{proof}

\begin{table}[h]
  \begin{tabular}{c|cccccccccc}
    no. bases & $  486$ & $ 540$ &$576$& $594 $ & $ 600$ &  $630$ & $ 636$ & $ 640$ & $660$ &  $664$  \\\hline 
    no. matroids & 7 & 8& 4 & 6 & 3 &1 & 1 & 1 & 1 & 1\\
    no. graphs & 5+\textbf{4}+\textbf{2} & 5+\textbf{3}+\textbf{2}+\textbf{2}& 4 & 4+\textbf{2}+\textbf{2} & 2+\textbf{2} &1 & 1 & 1 & 1 & 1\\
    \multicolumn{11}{c}{} \\
    no. bases &   $666$ & $684$ & $714$ & $720$ & $726$ &  $736$ & $744$ &  $750$ &  $756$ &  $774$  \\\hline 
    no. matroids & 4 & 3 & 2&  1 & 1 & 1 & 1 & 1 & 2 & 1 \\
    no. graphs &  4 & 3 & 2&  1 & 1 & 1 & 1 & 1 & 2 & 1 \\
    \multicolumn{11}{c}{} \\
    no. bases &  $784$ &$786$ & $804$ &   $816$ &  $834$ & $ 840$ & & & & \\ \hline 
    no. matroids  & 1 & 1 & 1 & 1 & 1 &  2& & & & \\
    no. graphs & 1 & 1 & 1 &  1 & 1 &  1+\textbf{2}& & & & 
  \end{tabular}
  \caption{The data on graph curves of genus $7$ and their arising matroids. The tuples of graphs that give rise to the same matroid are printed in bold in the last row of the table.}\label{tab:genus7}
\end{table}

  \section{Self-duality in tropical geometry}
  \label{sec6}
  
  In his seminal work on Chow quotients of Grassmannians,
  Kapranov discusses self-dual configurations  \cite[Section 2.3]{Kap},
  and shows that Gale duality induces a morphism
  of Chow quotients \cite[Corollary 2.3.14]{Kap}.
  Following Keel and Tevelev \cite[Section 1.14]{KT},
  this Chow quotient is the tropical compactification of
  the configuration space $\mathcal{R}(n,2n)$.
  This compactification is defined as the closure of
  $\mathcal{R}(n,2n)$ in the toric variety given by
  the tropical Grassmannian \cite[Section 4.3]{MS},
  by way of the construction described in \cite[Section~6.4]{MS}.
  
  The restriction of the Chow quotient to the self-dual Grassmannian
  thus yields a natural compactification of 
  the self-dual configuration space $\mathcal{S}(n)$.
  At a combinatorial level, this compactification is described
  by the self-dual locus inside the tropical Grassmannian.
  It is the purpose of this section to study that locus.
  This requires us to lift self-duality from matroids to valuated matroids,
  and hence to tropical linear spaces and matroid subdivisions.

  We now work over the field  of  Puiseux series  $K=\CC\{\!\{t\}\!\}$,
  with the $t$-adic valuation and
    residue field $\CC$.
  Recall from \cite[Section 4.3]{MS} that
  the {\em tropical Grassmannian}, denoted ${\rm trop}({\rm Gr}(n,2n))$, is the tropicalization of the open part ${\rm Gr}(n,2n)^o$. This
  is a balanced polyhedral fan of  dimension $n^2$ in $\RR^{\binom{2n}{n}}\!/\RR {\bf 1}$. Its lineality space  
  has dimension $2n-1$ mod $\mathbb{R}\mathbf{1}$,
   $$ L \,\, = \,\,
   \biggl\{ \bigl(\,\sum_{i\in I}\mu_i \,\bigr)_{\substack{ I \in \binom{[2n]}{n}} }\,\, :
    \,\, \mu_1,\ldots,\mu_{2n} \in \RR
   \biggr\}. $$
   
  The {\em tropical configuration space} for $2n$ points in $(n-1)$-space is the quotient 
$$ {\rm trop}(\mathcal{R}(n,2n)) \,\,:=\,\, {\rm trop}({\rm Gr}(n,2n))/L 
\,= \,{\rm trop}\bigl({\rm Gr}(n,2n)/(K^*)^{2n} \bigr).$$ 
This is a polyhedral fan of dimension $(n-1)^2$ which is pointed, i.e.~its lineality space is $\{0\}$.

  The tropical Grassmannian is naturally contained in the tropical prevariety $ {\rm Dr}(n,2n)$,
  known as the {\em Dressian}. Points of the Dressian are {\em valuated matroids}.
  These are the tropical solutions of the quadratic P\"ucker relations.
  The containment $ {\rm trop}({\rm Gr}(n,2n)) \subseteq {\rm Dr}(n,2n) $ is strict when $n \geq 4$.  Each 
  valuated matroid $q \in{\rm Dr}(n,2n)$ defines a tropical linear space
  that is dual to a matroid subdivision of the {\em hypersimplex}
  $\Delta(n,2n)$, the matroid polytope of the uniform matroid of rank $n$ on $2n$ elements.
  These concepts are explained in \cite[Section 4.4]{MS}.
  
  \begin{example}
    For $n=2$, the fan ${\rm trop}(\mathcal{R}(2,4))$ is $1$-dimensional, and has
    three rays, one for each of the three labelings on a general tropical line in $3$-space. This space coincides with the tropical moduli space $\mathcal{M}^{\text{trop}}_{0,4}$ of 4-marked curves of genus $0$.
    The Dressian $ {\rm Dr}(2,4)$ coincides with the tropical Grassmannian. It parametrizes tropical lines in 3-space. These lines are dual to the three matroid subidivisions of the octahedron $\Delta(2,4)$.  Each subdivides the octahedron into two square-based pyramids.
    This is illustrated in \cite[Figure 4.4.1]{MS}.
  \end{example}
  
  We next define the self-dual loci within these tropical spaces.
  The tropical Pl\"ucker coordinates are denoted $q_I$,
  where $I$ runs over the set $\binom{[2n]}{n}$ of $n$-element subsets of $[2n]$.
  We write $q^*_I = q_{I^c}\,$ for its dual Pl\"ucker coordinate. 
  Unlike the Hodge star operation in (\ref{eq:hodgestar})
  for classical Pl\"ucker coordinates $p_I$,
  no signs are needed for the tropical Hodge star map.
  In terms of valuated matroids, the tropical Hodge star sends a valuated matroid to its dual. 
 
    A vector $q$ in $\RR^{\binom{2n}{n}}$  is  {\em self-dual}
    if there exist  $\mu_1,\mu_2,\ldots,\mu_{2n} \in \RR$ such that 
    \begin{equation}
      \label{eq:selfdual}
      \,q_{i_1 i_2 \cdots i_n}  + 
      \mu_{i_1} + \mu_{i_2} + \cdots + \mu_{i_n} \,\,= \,\,\,q^*_{i_1 i_2 \cdots i_n} \,\,\quad \hbox{for all} \quad 1 \leq i_1 < i_2 < \cdots < i_n \leq 2n. 
    \end{equation}
    Thus, $q$ is self-dual if and only if the 
    difference  vector $q-q^*$ lies in the lineality space $L$.
\medskip

  Eliminating the unknowns $\mu_j$ in
  \eqref{eq:selfdual} leads to a linear subspace $L_{\rm sd}(n,2n)$ 
  that has codimension $2n-1$ in $\RR^{\binom{2n}{n}}$. It is spanned by $L$ and the
  $\frac{1}{2}\binom{2n}{n}$ vectors $e_I + e_{I^c}$.
Its equations are the binomials in
(\ref{eq:sixpointsonconic})  and Proposition \ref{prop:toricideal}, but written additively.
To compute them, fix
a matrix whose kernel equals $L$,
 and multiply it with the $\binom{2n}{n}\times \binom{2n}{n}$ matrix with $+1$ on its diagonal and $-1$ on its anti-diagonal. The kernel of the resulting matrix equals $L_{\rm sd}(n,2n)$.

  Since ${\rm trop}(\mathcal{R}(n,2n))$ is the quotient of 
  $ {\rm trop}({\rm Gr}(n,2n))$ modulo its lineality space $L$,
  self-duality is well-defined within the tropical configuration space.
  The Hodge star $q \mapsto q^*$~defines an involution on
  ${\rm trop}(\mathcal{R}(n,2n))$. Its fixed points are gained by intersecting with $L_{\rm sd}(n,2n)/L$.

We write ${\rm trop}({\rm Gr}(n,2n))^{\rm sd}$,
${\rm Dr}(n,2n)^{\rm sd}$ and  ${\rm trop}(\mathcal{R}(n,2n))^{\rm sd}$ for the intersections of
${\rm trop}({\rm Gr}(n,2n))$, ${\rm Dr}(n,2n)$ and  ${\rm trop}(\mathcal{R}(n,2n))$
 with  $L_{\rm sd}(n,2n)$ and $L_{\rm sd}(n,2n)/L$, respectively.
We refer to the points in these intersections as {\em self-dual valuated matroids}. 
They are precisely the fixed points under the tropical Hodge star involution
$q \mapsto q^*$ 
modulo~$L$.
\medskip
  
Our next result establishes the connection to matroid subdivisions of the hypersimplex.
  
  \begin{proposition} \label{prop:involution}
    Fix a self-dual valuated matroid $q \in {\rm Dr}(n,2n)^{\rm sd}$ 
    and let $\mathcal{M}(q)$ be the set of rank $n$ matroids
    that appear  in the associated matroid subdivision of the hypersimplex $\Delta(n,2n)$. 
    The set $\mathcal{M}(q)$ is fixed under taking matroid duals, i.e. $\mathcal{M}(q)=\{M^* \,|\, M \in \mathcal{M}(q)\}$. 
  \end{proposition}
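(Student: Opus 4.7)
The plan is to observe that the matroid subdivision induced by a valuated matroid $q$ on the hypersimplex $\Delta(n,2n)$ is invariant under a natural geometric involution whenever $q$ is self-dual. The key is to exploit two well-known facts: (i) regular subdivisions are unaffected by adding an affine function to the height function, and (ii) the involution $\sigma\colon \RR^{2n}\to \RR^{2n}$, $x\mapsto \mathbf{1}-x$, preserves $\Delta(n,2n)$ and sends each matroid polytope $P_M$ to $P_{M^*}$.

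First I would set up notation: write $\Sigma(q)$ for the regular subdivision of $\Delta(n,2n)$ with heights $q=(q_I)$, so that $\mathcal{M}(q)$ is exactly the set of matroids $M$ with $P_M$ a maximal cell of $\Sigma(q)$. Since every vector of the form $\ell_I = \sum_{i\in I}\mu_i$ is the restriction of a linear function on $\RR^{2n}$ to the vertices of $\Delta(n,2n)$, adding $\ell$ to $q$ does not change $\Sigma(q)$. Consequently, membership of $q$ in ${\rm Dr}(n,2n)^{\rm sd}$, which says $q^*-q\in L$, immediately gives $\Sigma(q)=\Sigma(q^*)$.

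Next I would relate $\Sigma(q^*)$ to $\Sigma(q)$ via $\sigma$. Applying $\sigma$ to the lifted vertices $(e_I,q_I)$ yields $(e_{I^c},q_I)$; reindexing by $J=I^c$ these become $(e_J, q_{J^c})=(e_J,q^*_J)$. Thus $\sigma$ carries the lower hull for $q$ bijectively to the lower hull for $q^*$, so $\sigma(\Sigma(q))=\Sigma(q^*)$. Combined with the previous paragraph, $\sigma(\Sigma(q))=\Sigma(q)$, i.e.\ the matroid subdivision is $\sigma$-invariant.

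Finally, for each maximal cell $P_M$ of $\Sigma(q)$, the image $\sigma(P_M)$ is also a maximal cell. A direct computation shows $\sigma(P_M)= \mathrm{conv}\{\mathbf{1}-e_B : B\text{ basis of }M\}=\mathrm{conv}\{e_{B^c}: B\text{ basis of }M\}=P_{M^*}$, so $M^*\in\mathcal{M}(q)$. There is no serious obstacle here; the only point requiring any care is the bookkeeping of step two, making sure the lifted polytope viewpoint identifies $\Sigma(q^*)$ with $\sigma(\Sigma(q))$, since the tropical Hodge star from \eqref{eq:hodgestar} carries no sign in the valuated setting and hence acts by the pure combinatorial involution $I\mapsto I^c$ used above.
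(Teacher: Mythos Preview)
Your proof is correct and follows essentially the same approach as the paper: both arguments rest on the two observations that (i) $q$ and $q^*$ differ by an element of the lineality space $L$ and hence induce the same regular subdivision, and (ii) the combinatorial involution $I\mapsto I^c$ carries the subdivision for $q$ to that for $q^*$ while sending each $P_M$ to $P_{M^*}$. The paper phrases step~(ii) cell by cell---normalizing $q$ so that a given $M$ appears as the face where $q_I=0$---whereas you package it more globally via the affine involution $\sigma(x)=\mathbf{1}-x$; this is a cosmetic difference, not a mathematical one.
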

  
  \begin{proof}
    Let $M$ be a matroid in $\mathcal{M}(q)$. Recall that we write $M^*$ for the dual matroid of $M$ obtained by applying set complementation $I \mapsto I^c$ to the bases. We show that
    $M^* \in \mathcal{M}(q)$. 
    Following \cite[Lemma 4.4.6]{MS}, the matroid polytope $P_M$ is
    a cell in the regular 
    subdivision of  the hypersimplex $\Delta(n,2n)$ defined by $q$.
    This means that, after adding a vector in the lineality space to $q$, we can assume that
    $q$ is non-negative and $q_I = 0$ if and only if $I$ is a basis of $M$.
    The Hodge star $q^*$  is also non-negative, and $q^*_J = 0$ if and only if $J $ is a basis of $M^*$, 
    i.e.~the matroid polytope of $M^*$ is a cell in the subdivision defined by~$q^*$.
    By (\ref{eq:selfdual}), $q$ and $q^*$ differ by a vector in the lineality space, 
    so they define the same 
    subdivision of $\Delta(n,2n)$.
  \end{proof}
  
  \begin{example}[$n=2$]
    Fix the self-dual valuated matroid $q = (q_{12} ,q_{13},q_{14},q_{23},q_{24},q_{34}) = (1,0,0,0,0,1)$.
    Then $\mathcal{M}(q)$ has two maximal cells.
        In terms of bases, these rank $2$ matroids are
        $M_1=\{12,13,14,23,24\}$ and $M_2=\{13,14,23,24,34\}$. Neither of them is self-dual, but
    the involution $I \mapsto I^c$ swaps $M_1$ and $M_2$. 
    The intersection $M_1 \cap M_2$ is a self-dual matroid.
  \end{example}
  
  The {\em tropical self-dual Grassmannian} ${\rm trop}({\rm SGr}(n,2n))$ is the tropicalization of the open part of the self-dual Grassmannian.   
  Combining Theorem \ref{thm:SOn} with \cite[Theorem 3.3.5]{MS} yields:
    
  \begin{corollary}
    The tropical self-dual Grassmannian $ {\rm trop}({\rm SGr}(n,2n)) $ is
    a balanced fan of pure dimension $\binom{n}{2} +2n-1$ in $\,\RR^{\binom{2n}{n}}/\RR {\bf 1}$.
    Its lineality space has  dimension $2n-1$. The~quotient modulo that  space equals
    $ \,{\rm trop}(\mathcal{S}(n))$. This is a pointed fan of pure dimension~$\binom{n}{2}$.
  \end{corollary}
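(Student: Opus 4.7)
The plan is to combine the birational parametrization of Theorem~\ref{thm:SOn} with the structure theorem for tropicalizations \cite[Theorem 3.3.5]{MS}, and then to track how the column-scaling torus action descends to the lineality space. First I would check that the open self-dual Grassmannian $\mathrm{SGr}(n,2n)^o$ is irreducible of dimension $\binom{n}{2}+2n-1$. Irreducibility follows from Theorem~\ref{thm:SOn}, which gives a birational map from the rational variety $\mathrm{SO}(n)$ onto $\mathcal{S}(n)$, together with the fact that $\mathrm{SGr}(n,2n)^o\to\mathcal{S}(n)$ is a $(K^*)^{2n}/\mathrm{stab}$-bundle whose fibres have dimension $2n-1$ (the stabilizer of the torus action on a point of $\mathrm{Gr}(n,2n)^o$ is one-dimensional, as recorded just before~\eqref{eq:Rn2n}). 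Summing dimensions gives $\binom{n}{2}+(2n-1)$, which is the dimension we want for the tropicalization.

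Next I would apply \cite[Theorem 3.3.5]{MS} directly to the very affine variety $\mathrm{SGr}(n,2n)^o$: since it is irreducible of dimension $\binom{n}{2}+2n-1$, its tropicalization is a pure-dimensional balanced polyhedral fan in $\RR^{\binom{2n}{n}}/\RR\mathbf{1}$ of that same dimension. The lineality space of this fan contains the $(2n-1)$-dimensional subspace $L$, because the $(K^*)^{2n}$-action by column scaling tropicalizes to translation by $L$ on Pl\"ucker coordinates (cf.~\eqref{eq:torusaction}). The quotient $\mathrm{trop}(\mathrm{SGr}(n,2n))/L$ is then balanced of pure dimension $\binom{n}{2}$, and standard compatibility of tropicalization with torus quotients (see \cite[Section 6.4]{MS}) identifies this quotient with $\mathrm{trop}(\mathcal{S}(n))=\mathrm{trop}(\mathrm{SGr}(n,2n)^o/(K^*)^{2n})$.

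It remains to show the lineality is exactly $L$, equivalently that $\mathrm{trop}(\mathcal{S}(n))$ is pointed. Here I would use the general principle that the lineality space of the tropicalization of a very affine variety $X$ equals the cocharacter lattice of the largest subtorus of the ambient torus that stabilizes $X$ (see \cite[Proposition~3.2.8]{MS}). Any subtorus acting on $\mathcal{S}(n)\subset\mathcal{R}(n,2n)$ would transfer across the birational isomorphism of Theorem~\ref{thm:SOn} to a torus acting on $\mathrm{SO}(n)$ by automorphisms; since $\mathrm{SO}(n)$ has finite center for $n\geq 2$, no nontrivial torus acts, so the lineality of $\mathrm{trop}(\mathcal{S}(n))$ is $\{0\}$. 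The main subtlety I anticipate is making the last torus-invariance argument airtight: one must rule out torus actions that are visible only on an open dense subset of $\mathcal{S}(n)$. A cleaner route is to argue directly that the map $\mathrm{SGr}(n,2n)^o \twoheadrightarrow \mathcal{S}(n)$ exhibits $\mathcal{S}(n)$ as a geometric quotient by a torus of dimension exactly $2n-1$, so the ``extra'' lineality created by the quotient has precisely the claimed dimension and no more. Once this is in place, the corollary is the direct bookkeeping of dimensions sketched above.
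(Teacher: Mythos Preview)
Your approach is the paper's approach: the paper's entire proof is the single clause ``Combining Theorem~\ref{thm:SOn} with \cite[Theorem~3.3.5]{MS} yields'', so irreducibility and dimension from Corollary~\ref{cor:rationaldim} plus the Structure Theorem give the balanced pure-dimensional fan, and the torus quotient identifies $\trop(\mathrm{SGr}(n,2n))/L$ with $\trop(\mathcal{S}(n))$. You are right that the pointedness claim (equivalently, that the lineality space is exactly $L$ and not larger) is the only nontrivial point, and the paper does not spell it out.

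Neither of your two arguments for pointedness works, however. The $\mathrm{SO}(n)$ argument fails on two counts: a birational isomorphism does not transport an action of a subtorus of the \emph{ambient} torus on $\mathcal{S}(n)$ to an action by algebraic-group automorphisms of $\mathrm{SO}(n)$, and in any case ``finite center'' only constrains inner automorphisms, not torus actions on $\mathrm{SO}(n)$ as a variety (left multiplication by a maximal torus of $\mathrm{SO}(n)$ is such an action). Your ``cleaner route'' only shows $L$ is contained in the lineality space; it does not rule out extra lineality, and in general a subvariety of a very affine variety can have strictly larger tropical lineality than the ambient one (take $X=V(y-1,z-1)\subset Y=V(xz+y-x-1)\subset(K^*)^3$: $\trop(Y)$ is pointed but $\trop(X)$ has a one-dimensional lineality).

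A direct argument that does work: the self-duality equations from Proposition~\ref{prop:toricideal} are quartic, so the degree-$2$ graded piece of $I_{\mathrm{SGr}^o}$ coincides with that of $I_{\mathrm{Gr}^o}$. If $v\notin L$, some three-term Pl\"ucker quadric $P$ is not $v$-homogeneous, and $\mathrm{in}_v(P)$ is either a Pl\"ucker monomial (impossible in $I_{\mathrm{SGr}^o}$ since all $p_I\neq 0$ on $\mathrm{SGr}^o$) or a two-term Pl\"ucker binomial; the latter, being degree~$2$, would have to lie already in $I_{\mathrm{Gr}}$, forcing a Pl\"ucker monomial into $I_{\mathrm{Gr}}$, a contradiction. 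Hence $\mathrm{in}_v(I_{\mathrm{SGr}^o})\neq I_{\mathrm{SGr}^o}$ for $v\notin L$, so the lineality is exactly $L$ and $\trop(\mathcal{S}(n))$ is pointed.
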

  
  The tropical prevarieties arising from the intersection with $L_{\rm sd}(n,2n)$
 are outer approximations to the tropicalizations of
  ${\rm trop}({\rm SGr}(n,2n))$ and $ {\rm trop}(\mathcal{S}(n))$. 
  We have the inclusions ${\rm trop}({\rm Gr}(n,2n))^{\rm sd} \supseteq {\rm trop}({\rm SGr}(n,2n))$,
  and therefore also  $\trop(\mathcal{R}(n,2n)^{\rm sd}) \supseteq {\rm trop}(\mathcal{S}(n))$.
  However, for $n \geq 4$, we do not know whether these inclusions are equalities.

We now turn to the first non-trivial scenario, namely $n=3$. Here, we show that the fixed point locus ${\rm trop}({\rm Gr}(3,6))^{\rm sd}$ coincides with ${\rm trop}({\rm SGr}(3,6))$. We give a 
parametrization of this tropical variety by  the space $\trop( \Gr(2,6))$
of phylogenetic trees with six leaves.

Recall that the Grassmannian ${\rm Gr}(2,6)$ is $8$-dimensional and lives in $\PP^{14}$, with Pl\"ucker coordinates $(p_{12}:p_{13}:\ldots:p_{56})$. In \cite{PS} Pachter and Speyer studied the monomial map 
\begin{equation}
\label{eq:PachterSpeyer}
    \varphi: {\rm Gr}(2,6) \dashrightarrow {\rm Gr}(3,6) ,\,\, p \mapsto (p_{ij} p_{ik} p_{jk})_{ijk}.
\end{equation}
  This map parametrizes 6-tuples of points on a conic in $\PP^2$. It maps birationally onto ${\rm SGr}(3,6)$.

  \begin{theorem}  \label{thm:dreisechs}
    The following is a balanced fan of pure dimension $8$ in $\RR^{20}/\RR {\bf 1}$:
    \begin{equation}
      \label{eq:dreisechs1}  {\rm trop}({\rm SGr}(3,6))\, = \,
     {\rm trop}({\rm Gr}(3,6))^{\rm sd}  \,= \,
     {\rm Dr}(3,6)^{\rm sd}  . 
    \end{equation}
    This tropical variety is linearly isomorphic to the  tropical Grassmannian $\,{\rm trop}({\rm Gr}(2,6))$.
    The isomorphism is given by  the tropical Pachter--Speyer map
     $\,q \mapsto (q_{ij}+ q_{ik} +q_{jk})_{ijk}\,$ which maps this tree space onto  (\ref{eq:dreisechs1}). 
Modulo lineality spaces we obtain the $3$-dimensional pointed~fan
\begin{equation}
      \label{eq:dreisechs2} {\rm trop}(\mathcal{R}(2,6)) = {\rm trop}(\mathcal{S}(3)) \,= \, {\rm trop}(\mathcal{R}(3,6))^{\rm sd}.
    \end{equation}
This is the fan over the 2-dimensional simplicial complex with $f$-vector $(25,105,105)$.
  \end{theorem}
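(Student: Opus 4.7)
The plan is to exploit the Pachter--Speyer rational parametrization $\varphi \colon \Gr(2,6) \dashrightarrow \SGr(3,6)$ in \eqref{eq:PachterSpeyer}. Since $\varphi$ is given by monomials in the Pl\"ucker coordinates and is birational onto $\SGr(3,6)$, its tropicalization is the linear map
\[
\Phi \colon \RR^{15} \,\longrightarrow\, \RR^{20}, \qquad q \,\mapsto\, (q_{ij}+q_{ik}+q_{jk})_{i<j<k},
\]
and the standard tropicalization principle for dominant morphisms of very affine varieties gives $\Phi(\trop(\Gr(2,6))) = \trop(\SGr(3,6))$, with matching polyhedral and balanced structures. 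The obvious inclusions $\trop(\SGr(3,6)) \subseteq \trop(\Gr(3,6))^{\rm sd} \subseteq \Dr(3,6)^{\rm sd}$ reduce the chain of equalities in \eqref{eq:dreisechs1} to establishing the reverse inclusion $\Dr(3,6)^{\rm sd} \subseteq \Phi(\trop(\Gr(2,6)))$. Before doing so, I would check directly that $\Phi(\RR^{15}) \subseteq L_{\rm sd}(3,6)$: setting $\nu_i := \sum_{j\neq i} q_{ij}$ and $S := \sum_{i<j} q_{ij}$, a short calculation shows $\Phi(q)^*_{ijk} - \Phi(q)_{ijk} = \mu_i+\mu_j+\mu_k$ for $\mu_i := S/3 - \nu_i$, so $\Phi(q) - \Phi(q)^* \in L$.

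The core of the argument is therefore the explicit inversion of $\Phi$. For any $Q = \Phi(q)$ and any four distinct indices $i,j,k,\ell \in [6]$, one has
\[
Q_{ijk} + Q_{ij\ell} - Q_{ik\ell} - Q_{jk\ell} \,=\, 2(q_{ij} - q_{k\ell}).
\]
This already shows that $\Phi$ is injective modulo the cut-metric lineality of tree space, and it prescribes all pairwise differences $q_{ij} - q_{k\ell}$ from $Q$. Given $Q \in \Dr(3,6)^{\rm sd}$, I would define $q$ via these differences together with a choice of base point, and then (i) verify cycle-consistency of the prescriptions around cycles in the complete graph on $\binom{[6]}{2}$, using the three-term tropical Pl\"ucker relations on $Q$ combined with the self-duality equation \eqref{eq:selfdual}, and (ii) verify that the resulting $q$ satisfies the four-point condition, hence lies in $\trop(\Gr(2,6))$; step (ii) again unpacks to tropical Pl\"ucker on $Q$. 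This yields $q \in \trop(\Gr(2,6))$ with $\Phi(q) = Q$ modulo $L$, closing the loop.

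Once the linear isomorphism is in hand, \eqref{eq:dreisechs2} is immediate by quotienting by the $(2n-1)$-dimensional lineality spaces. The $f$-vector $(25, 105, 105)$ then records the classical combinatorics of the Billera--Holmes--Vogtmann tree space $\mathcal{M}_{0,6}^{\rm trop}$: the $25$ rays correspond to non-trivial splits of $[6]$, with count $\binom{6}{2} + \tfrac{1}{2}\binom{6}{3} = 15+10$; the $105$ triangles correspond to trivalent trees on six labeled leaves, with count $(2\cdot 6-5)!! = 7\cdot 5\cdot 3 = 105$; and the $105$ edges enumerate the pairs of pairwise-compatible splits.

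The main obstacle is step (i) of the inversion: closed-cycle consistency of the prescriptions $q_{ij} - q_{k\ell}$. The tropical Pl\"ucker relations of $\Dr(3,6)$ are indexed by triples, whereas the prescription involves pairs, so several relations must be combined per cycle. A bookkeeping argument via a case analysis on the overlap pattern of the indexing triples, enabled by the $S_6$-equivariance of the situation, should suffice. As alternatives, one could pass to matroid subdivisions of $\Delta(3,6)$ and leverage the duality-symmetric structure from Proposition \ref{prop:involution}, or perform an $S_6$-equivariant computer verification on the $65$ maximal cones of $\Dr(3,6)$ enumerated by Speyer--Sturmfels.
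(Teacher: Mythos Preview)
Your approach is genuinely different from the paper's, and in some ways more conceptual. The paper proves \eqref{eq:dreisechs1} computationally: the right equality is the known fact $\Dr(3,6)=\trop(\Gr(3,6))$, and the left equality is established by computing $\trop(\SGr(3,6))$ in \texttt{Singular} from its $36$ defining equations, computing $\Dr(3,6)^{\rm sd}$ in \texttt{polymake} as $\Dr(3,6)\cap L_{\rm sd}(3,6)$, and matching cones up to $S_6$-symmetry. The linear isomorphism with $\trop(\Gr(2,6))$ is then recorded as a corollary of the general fact that monomial birational maps tropicalize to linear isomorphisms. You instead make the Pachter--Speyer map do the structural work, reducing everything to the inclusion $\Dr(3,6)^{\rm sd}\subseteq\Phi(\trop(\Gr(2,6)))$.

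One simplification you should make: your step (i), cycle-consistency, is not an obstacle at all. Your own computation shows $\Phi(\RR^{15})\subseteq L_{\rm sd}(3,6)$, and the inversion identity $Q_{ijk}+Q_{ij\ell}-Q_{ik\ell}-Q_{jk\ell}=2(q_{ij}-q_{k\ell})$ together with connectedness of the disjoint-pairs graph on $\binom{[6]}{2}$ shows $\Phi$ is injective. Since $L_{\rm sd}(3,6)$ has codimension $2n-1=5$ in $\RR^{20}$, both sides have dimension $15$, so $\Phi\colon\RR^{15}\to L_{\rm sd}(3,6)$ is a linear isomorphism. Thus for any $Q\in\Dr(3,6)^{\rm sd}\subseteq L_{\rm sd}(3,6)$ there is automatically a unique $q=\Phi^{-1}(Q)$; no consistency check is needed.

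The real content is therefore entirely in your step (ii): showing that $\Phi(q)\in\Dr(3,6)$ forces $q\in\Dr(2,6)$. You gesture at deriving the four-point condition from the three-term tropical Pl\"ucker relations on $Q$, but do not carry this out. This is exactly the point where the paper's computation substitutes for a by-hand argument; if you want a genuinely computation-free proof you must supply this implication explicitly. Your alternative of an $S_6$-equivariant check on the maximal cones of $\Dr(3,6)$ is essentially the paper's method. Either route is valid, but as written the proposal is an outline rather than a proof at precisely the step that carries the weight.
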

  Here we consider all the fans with their unique coarsest fan structure.
  \begin{proof}
    The right equality of (\ref{eq:dreisechs1}) follows from
    $ {\rm Dr}(3,6)\, = \,  {\rm trop}({\rm Gr}(3,6))$ \cite[Example~4.4.10]{MS}.
    The left equality  of (\ref{eq:dreisechs1}) is verified using computational methods. 
    Recall that ${\rm SGr}(3,6)$ is the $8$-dimensional subvariety of $\PP^{19}$ defined by the quartic binomial \eqref{eq:sixpointsonconic} plus the $35$ quadratic Pl\"ucker relations for ${\rm Gr}(3,6)$.
    Using these $36$ equations and the {\tt Singular} package for tropical varieties, we compute
   $\trop(\SGr(3,6))$. We find that this is an $8$-dimensional fan, with $5$-dimensional lineality space, and  
   the quotient is  simplicial with f-vector $(25,105,105)$. 
    Using \texttt{polymake}, we intersect the Dressian $\Dr(3,6)$
    with $L_{\rm sd}(3,6)$ to get $\Dr(3,6)^{\rm sd}$.
    Comparing lineality spaces, rays, and cones up to $S_6$-symmetry, we conclude 
     $ {\rm trop}({\rm SGr}(3,6)) \,=\,
    {\rm Dr}(3,6)^{\rm sd}$.
        
    The monomial map (\ref{eq:PachterSpeyer})  becomes a linear
    isomorphism after tropicalization  by \cite[Corollary 3.2.13]{MS}.
    The identities in (\ref{eq:dreisechs2}) follow from the left one in (\ref{eq:dreisechs1}) and
    the tropical Pachter--Speyer map. Finally,  ${\rm trop}({\rm Gr}(2,6)) $ has 
    f-vector $(25,105,105)$ by \cite[Example 4.3.15]{MS}.
  \end{proof}

  We next analyze the identification of the  self-dual tropical Grassmannian $\trop(\SGr(3,6))$ 
  with the tree space $\trop(\Gr(2,6))$ in more detail, starting with
  $\trop(\text{Gr}(3,6))^{\rm sd}$.
    Recall from \cite[Example 4.4.10]{MS}
  that $\trop(\Gr(3,6))$ has three types of rays:
    \begin{align*}
      &\text{type E:} \,\, e_{i_1i_2i_3}, \qquad  \qquad
      \text{type F:} \,\, f_{i_1i_2i_3i_4} = e_{i_1i_2i_3}+e_{i_1i_2i_4}+e_{i_1i_3i_4}+e_{i_2i_3i_4},\\
      &\text{type G:} \,\, g_{i_1i_2i_3i_4i_5i_6} = f_{i_1i_2i_3i_4} + e_{i_3i_4i_5}  + e_{i_3i_4i_6}, \,\,\, 
      \qquad \text{ where } i_1,\ldots,i_6 \in [6].
    \end{align*}
The intersection $\trop(\text{Gr}(3,6))^{\rm sd} = \trop(\Gr(3,6))  \cap L_{\rm sd}(3,6) $ has
  two types of rays: the $15$ rays of type F and the $10$ rays
 $e_{I}+e_{I^c}$, where $I \in \binom{[6]}{3}$. We denote the latter type by   E$^{\rm sd}$. 

The maximal cones of  $\trop(\Gr(3,6))$ come in seven types, 
shown in \cite[Figure 5.4.1]{MS}. Fix such a cone $C$.
The intersection $C \cap L_{\rm sd}(3,6)$
is spanned by the rays of type F and E$^{\rm sd}$ that lie in $C$. 
Only the cones FFFGG and EEFF(a) have maximal-dimensional intersection with $L_{\rm sd}(3,6)$.
The cones EEFF(b), EFFG and EEFG intersect in proper faces, which are intersections with cones of the form FFFGG and EEFF(a). 
For the  cones  EEEE and EEGG, the intersection equals the lineality space of $\trop(\Gr(3,6))$.
Thus, $\trop(\text{Gr}(3,6))^{\rm sd}$ consists of the $105 = 15+90$ cones
$C \cap L_{\rm sd}(3,6)$ where $C$ has type FFFGG or EEFF(a).
    
    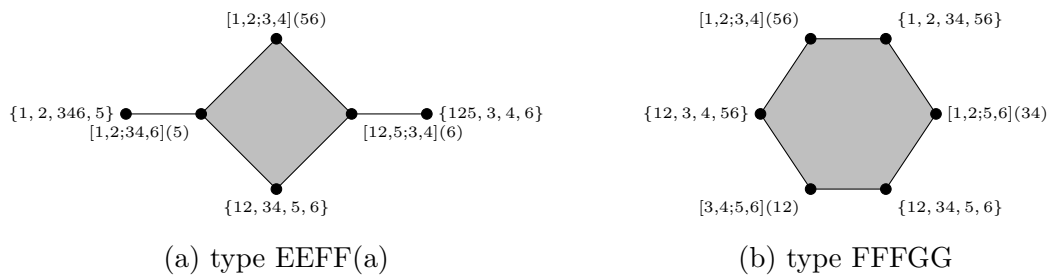
\begin{figure}[h]
      \centering
      \begin{subfigure}{0.45\textwidth}
        \centering
        \begin{tikzpicture}
          \draw[black, fill = lightgray] (0,0) node[below] {\tiny$\{12,34,5,6\}$}
          --(1,1) node[below right] {\tiny [12,5;3,4](6)}
          -- (0,2) node[above] {\tiny[1,2;3,4](56)}
          -- (-1,1) node[below left] {\tiny[1,2;34,6](5)}-- cycle;
          \draw[black] (-2,1) node[left] {\tiny$\{1,2,346,5\}$}-- (-1,1) ;
          \draw[black] (1,1) -- (2,1) node[right]{\tiny$\{125,3,4,6\}$} ;
          \filldraw [black] (0,0) circle (2pt);
          \filldraw [black] (1,1) circle (2pt);
          \filldraw [black] (0,2) circle (2pt);
          \filldraw [black] (-1,1) circle (2pt);
          \filldraw [black] (-2,1) circle (2pt);
          \filldraw [black] (2,1) circle (2pt);
        \end{tikzpicture}
        \caption{type EEFF(a)}
      \end{subfigure}
      \begin{subfigure}{0.45\textwidth}
        \centering
        \begin{tikzpicture}
          \node (A) at (0,0) {};
          \node (B) at (1,0) {};
          \node (C) at (5/3,1) {};
          \node (D) at (1,2) {};
          \node (E) at (0,2) {};
          \node (F) at (-2/3,1) {};
          \draw[black, fill = lightgray] (0,0) node[below left]  {\tiny[3,4;5,6](12)}
          --(1,0) node[below right]  {\tiny$\{12,34,5,6\}$}
          -- (5/3,1) node[right] {\tiny[1,2;5,6](34)}
          -- (1,2) node[above right]  {\tiny$\{1,2,34,56\}$}
          -- (0,2) node[above left] {\tiny[1,2;3,4](56)}
          -- (-2/3,1) node[left] {\tiny$\{12,3,4,56\}$}
          --cycle;
          \foreach \i in {A,B,C,D,E,F} { 
            \filldraw [black] (\i) circle (2pt); }
        \end{tikzpicture}
        \caption{type FFFGG}
      \end{subfigure}
      \caption{The two types of the maximal cones in $\text{trop}({\rm SGr}(3,6))$  }
    \label{fig:tropSGr(3,6)}
  \end{figure}

  \begin{figure}[h]
    \centering
    \begin{subfigure}{0.45\textwidth}\centering
      \begin{tikzpicture}[scale = 0.6]
        \draw[black] (0,0) -- (3,0) ;
        \draw[black] (1,0) -- (1,1) ;
        \draw[black] (2,0) -- (2,1) ;
        \draw[black] (0,0) -- (-1,1) ;
        \draw[black] (0,0) -- (-1,-1) ;
        \draw[black] (3,0) -- (4,1) ;
        \draw[black] (3,0) -- (4,-1) ;
        \node (A) at (-1.15,1) {1};
        \node (B) at (-1.15,-1) {2};
        \node (C) at (1,1.2) {3};
        \node (D) at (2,1.2) {4};
        \node (E) at (4.15,1) {5};
        \node (F) at (4.15,-1) {6};
      \end{tikzpicture}
      \caption{caterpillar tree}
       \label{fig:caterpillar}
    \end{subfigure}
    \begin{subfigure}{0.45\textwidth}
      \centering
      \begin{tikzpicture}[scale = 0.6]
        \node (A) at (-1.1,2.1) {1};
        \node (B) at (1.1,2.1) {2};
        \node (C) at (2.2,-1) {3};
        \node (D) at (1,-2.2) {4};
        \node (E) at (-1,-2.2) {5};
        \node (F) at (-2.2,-1) {6};
        \draw[black] (0,0) -- (0,1) ;
        \draw[black] (0,0) -- (1,-1) ;
        \draw[black] (0,0) -- (-1,-1) ;
        \draw[black] (0,1) -- (-1,2) ;
        \draw[black] (0,1) -- (1,2) ;
        \draw[black] (1,-1) -- (2,-1) ;
        \draw[black] (1,-1) -- (1,-2) ;
        \draw[black] (-1,-1) -- (-2,-1) ;
        \draw[black] (-1,-1) -- (-1,-2) ;
      \end{tikzpicture}
      \caption{snowflake tree}
       \label{fig:snowflake}
    \end{subfigure}
    \caption{The two types the maximal cones in the tree space
    $\text{trop}(\text{Gr}(2,6))$.}\label{fig:trees}
  \end{figure}
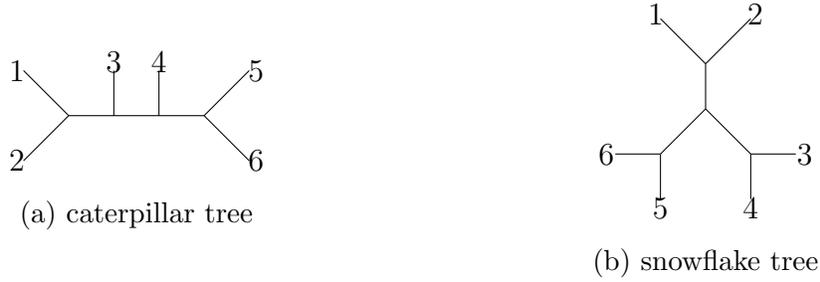

The $25 =15+10$ rays of  ${\rm trop}({\rm Gr}(2,6))$  are indexed by {\em splits}
(cf.~\cite[Example 4.3.15]{MS}) of the set $[6]$.
 Under the tropical Pachter--Speyer map, the $15$ rays for 2-4 splits are 
mapped to F rays, while the $10$ rays for 3-3 splits are mapped
to   E$^{\rm sd}$ rays. This induces the identification of maximal cones.
The $90$ cones of  $\text{trop}(\text{Gr}(2,6))$  indexed by 
{\em caterpillar trees} are mapped to the cones of type  EEFF(a),
while the $15$ cones indexed by {\em snowflake trees} are mapped
to the cones  of type FFFGG. This identification matches the
pictures between Figures \ref{fig:tropSGr(3,6)} and~\ref{fig:trees}.
  
We now turn to degenerations of self-dual point configurations. Our aim is to show
how these degenerations can be understood using the combinatorial machinery developed above.

\begin{example}
  The following self-dual point configuration in  $\PP^2$ appeared in  \cite[Example~1]{AFMS}.
    \begin{align}\label{eq:pts} 
    X \,\, = \,\, \begin{small}
    \begin{pmatrix}
      1 & 1 & 1 & 1 & 1 & 1 \\
      1 & 1+t & 2 & 2+t & 3 & 3+t\\
      1 & (1+t)^2& 4 &(2+t)^2 & 9 & (3+t)^2
    \end{pmatrix}. \end{small}
  \end{align}
This is the section $\{w=0\}$ of the smooth canonical curve in $\PP^3$ defined by
the quadric $\,xz - y^2-w(x-y-z) \,$ and the cubic 
$ (6t^3 {+} 36t^2 {+} 66t {+} 36)x^3 - (11t^3 {+} 84t^2 {+} 193t {+} 132)x^2y $ $
    + (6t^3 {+} 69t^2 {+} 216t {+} 193)x^2z  $ $
- (t^3 {+} 24t^2 {+} 116t {+} 144)xyz + (3t^2 {+} 30t {+} 58)xz^2 - (3t {+} 12)yz^2 + z^3+w^2(y-x)$.
    This point configuration $X$ determines the following point in $\trop(\SGr(3,6))$:
  \begin{equation}
  q\,\,=\,\,  [1,1,1,1,1,0,0,0,0,1,1,0,0,0,0,1,1,1,1,1] \,\, \in \,\,\RR^{20} / \RR {\bf 1}. \label{eq:ptsgrass}
  \end{equation}
  It lies in the cone of type FFFGG and it is the image of the
  following point in   $\trop(\Gr(2,6))$:
  $$[1,0,0,0,0,0,0,0,0,1,0,0,0,0,1] \, = e_{12} + e_{34} + e_{56} \,\,\in \,\,\RR^{15}/\RR {\bf 1}.$$
  This corresponds to the snowflake tree
  labeled  as in Figure \ref{fig:snowflake}.

To understand the compactification of $\mathcal{S}(3)$,
we explore different matrices for
the point given by $X$.
These are obtained from the actions by
${\rm GL}(3,K)$ and $(K^*)^6$.
One example is
\begin{align}\label{eq:pts2} 
X' \,\, = \,\, \begin{small}
    \begin{pmatrix}
      t^{-\frac{    2}{3}} & t^{-\frac{   2}{3}} & t^{\frac{1}{3}} & t^{\frac{1}{3}} & t^{\frac{1}{3}} & t^{\frac{1}{3}}\\
      t^{-\frac{    2}{3}} & (1+t)t^{-\frac{    2}{3}} &2t^{\frac{1}{3}} & (2+t)t^{\frac{1}{3}} & 3t^{\frac{1}{3}} & (3+t)t^{\frac{1}{3}} \\
      t^{-\frac{    2}{3}} & (1+t)^2  t^{-\frac{    2}{3}}&4t^{\frac{1}{3}} & (2+t)^2t^{\frac{1}{3}} & 9t^{\frac{1}{3}} & (3+t)^2t^{\frac{1}{3}} \\
    \end{pmatrix}.
    \end{small}
\end{align}
The tropical Pl\"ucker vector  of this matrix is equivalent to (\ref{eq:ptsgrass}) modulo the lineality space:
\begin{equation} q' \,\,=\,\, [0,0,0,0,1, 0,  0,  0,0,1, 1,0, 0,0, 0,1, 2, 2, 2,2] \,\, \in \,\,
\trop(\SGr(3,6)) \,\subset \,\RR^{20} / \RR {\bf 1}. \label{eq:ptsgrass2}
\end{equation}

Consider what happens as  $t\rightarrow 0$.
In (\ref{eq:pts}) the six points collide pairwise.
This corresponds to the grey triangle picture in the middle of Figure \ref{fig:boundaries} (right).
However, in (\ref{eq:pts2}) only the pairs $\{3,4\}$ and $\{5,6\}$ collide, while
 points $1$ and $2$ stay distinct in the limit.
This is the encircled picture on the upper right of the right image in Figure \ref{fig:boundaries}.
The two pictures show two distinct matroids of rank $3$ on $[6]$, their bases
are given by the zero coordinates in (\ref{eq:ptsgrass}) and (\ref{eq:ptsgrass2}).
\end{example}

  \begin{figure}[h] $\!\!\!\!\!\!\!\!\!\!\!\!$
    \includegraphics[height=0.23\textheight]{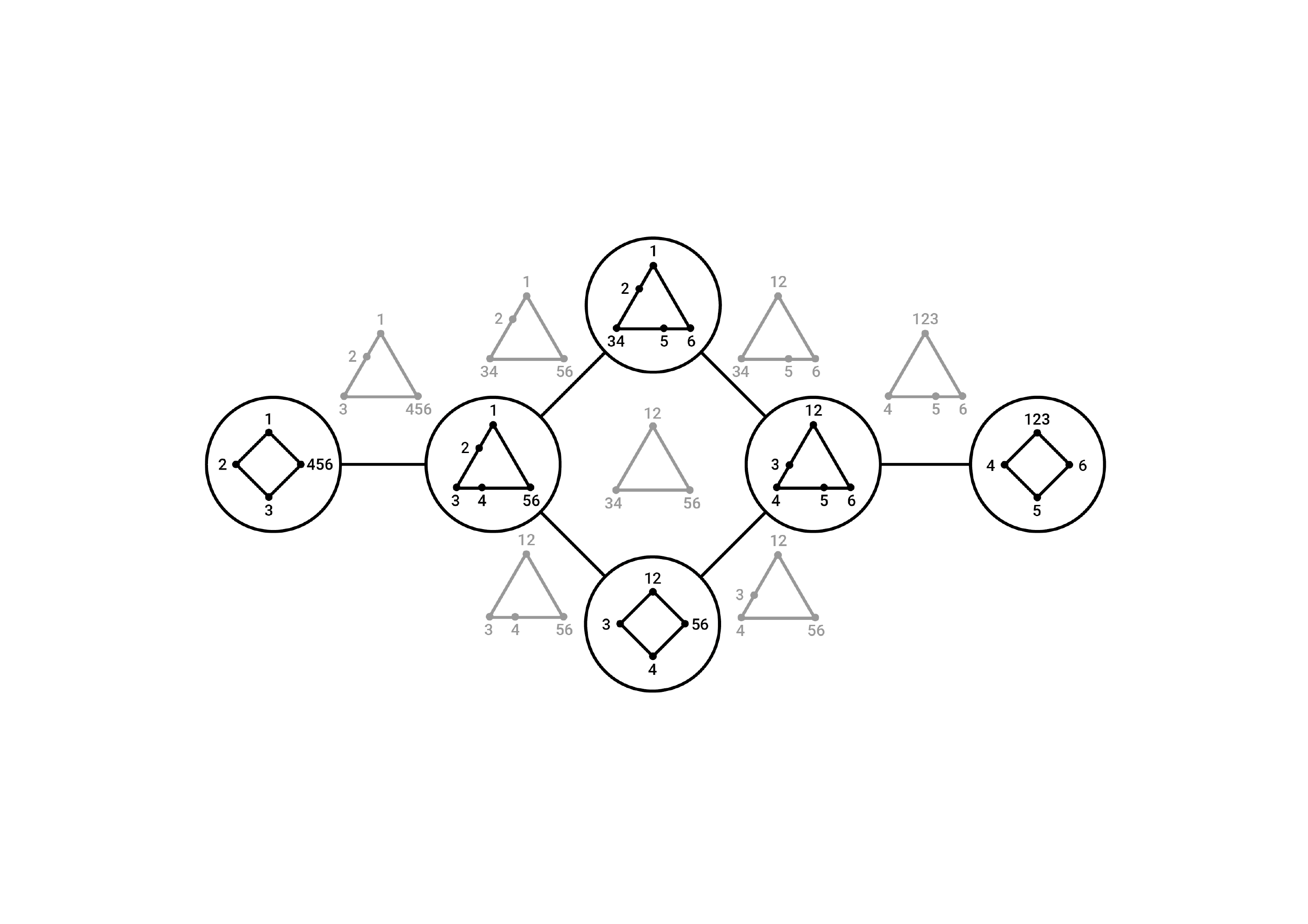}
    \includegraphics[height=0.34\textheight]{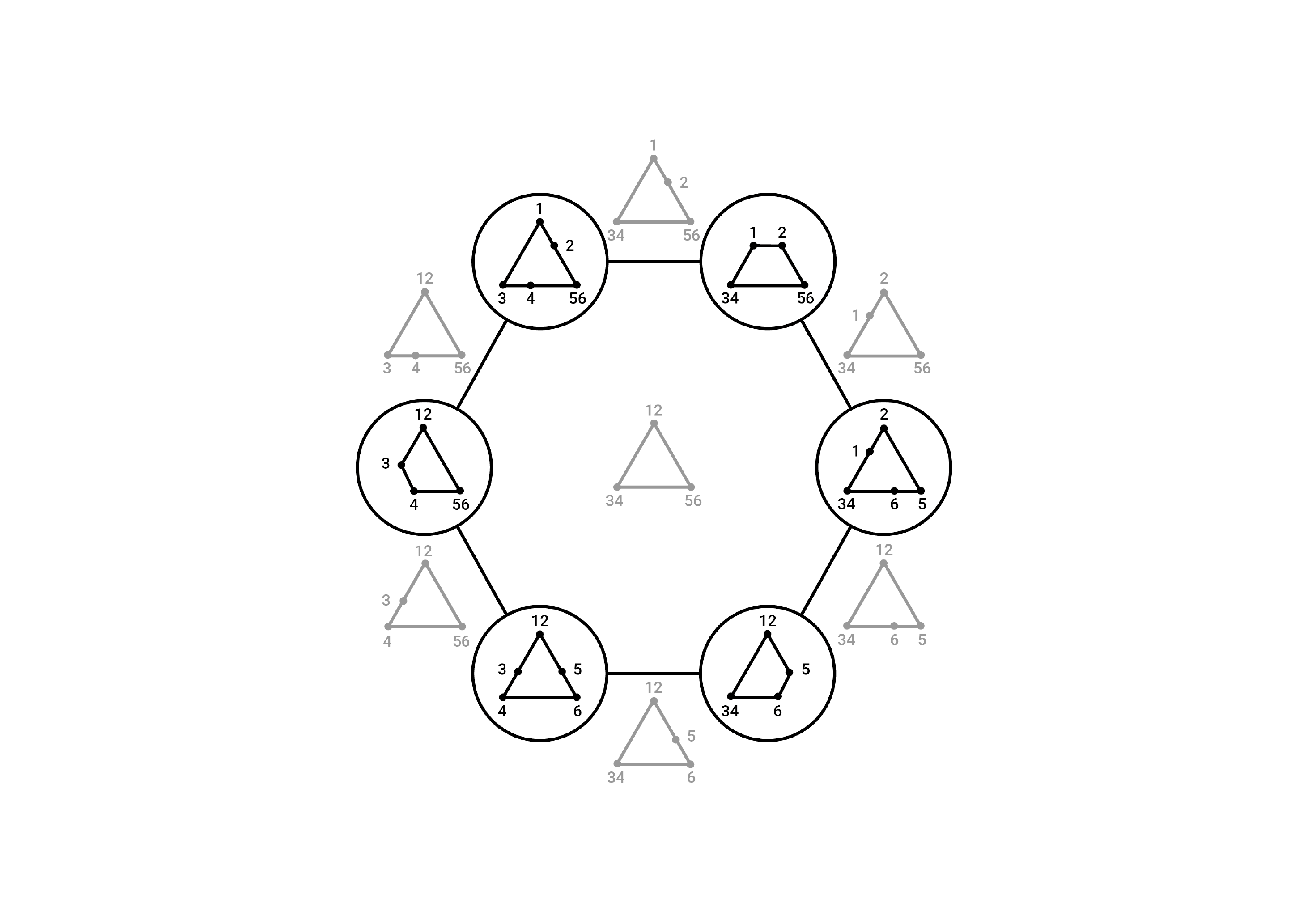}
    \caption{Configurations of type EEFF(a) and type
     FFFGG in the compactification of~$\mathcal{S}(3)$}
     \label{fig:boundaries}
  \end{figure}

Figure \ref{fig:boundaries} is a representation of the self-dual matroid subdivisions
corresponding to the $105$ triangles in Theorem  \ref{thm:dreisechs},
$90$ of type EEFF(a) and $15$ of type FFFGG.
The pictures coincide with those in Figure \ref{fig:tropSGr(3,6)}, but now
we draw configurations for the matroids in 
each subdivision.
The valuated matroid $q$ is self-dual,
and $\mathcal{M}(q)$ satisfies the conclusion of Proposition \ref{prop:involution}.

Our discussion  is meant to explain the compactification of 
the self-dual configuration space $\mathcal{S}(3)$ given by
Kapranov's Chow quotient.
Figure \ref{fig:boundaries} shows the limit behavior for 
configurations of six points (like $X \simeq X'$)
that lie on a conic in $\PP^2$.
It is instructive to compare our pictures to 
those drawn by Schaffler and Tevelev in \cite{SchaffTev}.
Their diagrams represent the limit behavior of configurations in their Mustafin variety, which is a degeneration of $\PP^2$.
For instance, the picture \#20 in \cite[Table 15]{SchaffTev}
shows our configuration of type FFFGG.

We conclude with a discussion of
the case $n=4$, which is considerably more challenging.
Recall from Theorem \ref{thm:gamma}
 that $\mathcal{S}(4)$ is birational to $\mathcal{R}(4,7)$ via
the Cayley octad map $\gamma$.
The naive tropicalization of $\gamma$ 
has undesirable properties,
as the following example shows.

\begin{example}
  The naive tropicalization ${\rm trop}(\gamma)$ sends $q\in {\rm Dr}(4,7)$  to the vector $r$ given~by  
  $$ r_I \,\,= \,\,\begin{cases}
    q_I \quad\quad\quad\quad&\text{ if } 8\notin I, \\
    q_I - \sum_{i \in I} \mu_i &\text{ else,}
  \end{cases}$$ 
  where $\mu_1,\ldots,\mu_7$
  are tropicalizations of the Pl\"ucker binomials $x_1,\ldots,x_7$ in Theorem \ref{thm:gamma}.
However, this $r$ fails to lie in the Dressian ${\rm Dr}(4,8)$
for some choices of $q \in {\rm trop}({\rm Gr}(4,7))$.

A choice of $q$ where $r \not\in {\rm Dr}(4,8)$ is the tropical Pl\"ucker vector of the $4 \times 7$-matrix
$$ \begin{small}
    \begin{pmatrix}
      4 t   &  2 t^5&     -9 t^2 &      t^3     &  -5 t^4    & 6 t^4  &    -9 t \\
      9 t ^2&      8    &   3 t  &    4 t    & 3 t^3 &      -2 t^4 &       5  \\
      -5 t^4&     3 t^6&     -4 t^2&     3 t^4&      4 t^6&      8 t^4&       t^2   \\
      -3 t    &  -8 &    6 t^2&      -6 t^4  &   -t  &     -t^2  &    -9 t^3 
    \end{pmatrix}.
\end{small}
$$
Here $r = {\rm trop}(\gamma)(q)$ is not a valuated matroid
  because the `min achieved twice'-condition in the Pl\"ucker quadric
    $p_{1234}p_{1358} - p_{1235}p_{1348} + p_{1238}p_{1345}$  is violated.
Indeed, we have
  $r_{1234}=4$, $r_{1358}=18$, $r_{1235}=4 $, $r_{1348}=17$, $r_{1238}=15$ and $r_{1345}=5$, so that $\min\{22,21,20\}$ is not achieved twice.
  This is due to the fact that cancellations appear in the formula for computing $x_2$. Classically, we have $\val(x_2) = \text{val}(-p_{1234}p_{1256}p_{2357}p_{2467}+p_{1235}p_{1246}p_{2347}p_{2567})=21$. But tropically we obtain $\min \{ q_{1234}+q_{1256}+q_{2357}+q_{2467},q_{1235}+q_{1246}+q_{2347}+q_{2567}\} = \min\{20,20\}$.
\end{example}
  
More work is needed to correctly tropicalize the  birational isomorphism
$ \gamma: \mathcal{R}(4,7) \simeq \mathcal{S}(4)$. These
are very affine varieties of dimension $9$.
The aim would be to parametrize~the $9$-dimensional tropical variety
${\rm trop}(\mathcal{S}(4))$ whose points are the
tropical Cayley octads. Computing this space
is a challenge for the near future.
The following serves as point of departure.

  \begin{proposition}
  The tropical self-dual Grassmannian ${\rm trop}(\gamma({\rm Gr}(4,7))) = {\rm trop}({\rm SGr}(4,8))$
    is a balanced fan of pure dimension $16$ in $\RR^{70}/\RR {\bf 1}$ with $7$-dimensional
    lineality space. 
    This fan is contained in ${\rm trop}({\rm Gr}(4,8))^{\rm sd}$,
    which in turn satisfies the
     following strict inclusion:
    \begin{equation}
      \label{eq:threestrict}
      {\rm trop}({\rm Gr}(4,8))^{\rm sd} \, \subsetneq \,
      {\rm Dr}(4,8)^{\rm sd}.
\end{equation}      
    \end{proposition}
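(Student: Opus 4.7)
The plan is to address the four assertions in turn, following the strategy used in the proof of Theorem~\ref{thm:dreisechs} for $n=3$.

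First I would handle the equality $\trop(\gamma(\Gr(4,7))) = \trop(\SGr(4,8))$ and the structural properties. By Theorem~\ref{thm:gamma}, the Cayley octad map $\gamma$ yields a dominant rational map from $\Gr(4,7)$, extended by torus scaling of the eighth column, onto $\SGr(4,8)$. Since tropicalization is compatible with Zariski closures of dominant rational maps and with torus saturation, the tropicalizations coincide. The balanced fan structure and pure dimension then follow from the Bieri--Groves structure theorem applied to $\SGr(4,8)$, which is irreducible by the $\SO(4)$-parametrization in Theorem~\ref{thm:SOn}. The $7$-dimensional lineality is the image of the natural $(K^*)^8/K^*$ torus action on Pl\"ucker coordinates, restricted to $\SGr(4,8)$. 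The stated dimension is verified by a direct {\tt Singular}/{\tt polymake}/{\tt OSCAR} computation of $\trop(\SGr(4,8))$ from the $21$ quartic binomials of \cite[Proposition 7.2]{PSV} together with the Pl\"ucker quadrics generating the ideal of $\Gr(4,8)$.

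The containment $\trop(\SGr(4,8)) \subseteq \trop(\Gr(4,8))^{\rm sd}$ is immediate from the definitions. The self-duality condition $p^* = \lambda p$ with $\lambda \in (K^*)^8$ tropicalizes to $q^*_I = q_I + \sum_{i \in I} \mu_i$ for some $\mu \in \RR^8$, and eliminating $\mu$ recovers the linear relations defining $L_{\rm sd}(4,8)$. Hence every valuated matroid in $\trop(\SGr(4,8))$ lies in $\trop(\Gr(4,8)) \cap L_{\rm sd}(4,8) = \trop(\Gr(4,8))^{\rm sd}$.

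For the strict inclusion $\trop(\Gr(4,8))^{\rm sd} \subsetneq \Dr(4,8)^{\rm sd}$, I would exhibit an explicit self-dual valuated matroid that fails to be realizable. The V\'amos matroid $V_8$ is a classical rank-$4$ matroid on $8$ elements that is self-dual and non-representable over any field. Its indicator valuated matroid, with $q_I = 0$ on bases of $V_8$ and $q_I = +\infty$ on non-bases, is a self-dual point of $\Dr(4,8)^{\rm sd}$ which is excluded from $\trop(\Gr(4,8))^{\rm sd}$ because a realization would yield a classical realization of $V_8$. To produce a finite-valued witness in $\RR^{\binom{8}{4}}$, one passes to a regular matroid subdivision of $\Delta(4,8)$ having $P_{V_8}$ as a cell, chosen with weights symmetric under $I \mapsto I^c$; the initial matroid at a generic interior point of $P_{V_8}$ is $V_8$ itself, forcing the resulting valuated matroid out of $\trop(\Gr(4,8))$.

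The principal obstacle is the explicit fan-level computation needed for the dimension count and the equality of $\trop(\gamma(\Gr(4,7)))$ with $\trop(\SGr(4,8))$. The analogous step in Theorem~\ref{thm:dreisechs} for $n=3$ was already substantial; for $n=4$ the ambient space $\RR^{70}/\RR\mathbf{1}$ and the hundreds of defining equations make an exhaustive enumeration of cones demanding, and exploiting the $S_8$-symmetry of the configuration together with the tropicalized Cayley octad parametrization will be essential for a tractable computation.
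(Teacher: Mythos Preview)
Your argument for the structural claims and the containment $\trop(\SGr(4,8)) \subseteq \trop(\Gr(4,8))^{\rm sd}$ is essentially the paper's argument: both appeal to the Fundamental/Structure Theorem for the balanced-fan and dimension assertions, and both observe that the right-hand side is the tropical prevariety of the very equations cutting out $\SGr(4,8)$. The exhaustive fan computation you flag as the ``principal obstacle'' is not actually carried out in the paper and is not needed for the statement.

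The genuine gap is in your witness for the strict inclusion. The V\'amos matroid $V_8$ is isomorphic to its dual but is \emph{not identically self-dual} in the sense used throughout this paper: one of the five nonbases has a complement that is a basis, so the set of nonbases is not closed under $I \mapsto I^c$. Consequently the indicator valuated matroid you propose, with $q_I = 0$ on bases of $V_8$ and $q_I = +\infty$ otherwise, does not lie in $\Dr(4,8)^{\rm sd}$, since its support is not invariant under complementation and no lineality vector can repair this. Your fallback of choosing weights ``symmetric under $I \mapsto I^c$'' with $P_{V_8}$ as a cell runs into the same obstruction: the polytope $P_{V_8}$ is itself not invariant under that involution. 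The paper instead uses the matroid {\tt 4.14.a} from Table~\ref{tab:rank4}, which contains the Fano plane, is genuinely identically self-dual, and is non-realizable over $\CC$; setting $q_I = 0$ on its bases and $q_I = 1$ on its nonbases then gives a finite self-dual point of $\Dr(4,8)^{\rm sd}$ that cannot lie in $\trop(\Gr(4,8))$. Replacing $V_8$ by any non-realizable identically self-dual rank-$4$ matroid fixes your argument.
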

    
    \begin{proof}
  The Fundamental Theorem \cite[Theorem 3.2.3]{MS} yields the first statement.
We have
  \begin{equation} \label{eq:containmen}
  {\rm trop}({\rm SGr}(4,8)) \,\, \subseteq \, \,{\rm trop}({\rm Gr}(4,8))^{\rm sd} 
  \end{equation}
since the right hand side is the tropical prevariety of the
equations that cut out ${\rm SGr}(4,8)$. For the strictness of the inclusion (\ref{eq:threestrict}),
consider the  self-dual matroid {\tt 4.14.a} from Table \ref{tab:rank4}, which is not realizable over $\mathbb{C}$. Starting with this matroid, construct
    a point $q$ in  ${\rm Dr}(4,8)^{\rm sd}$ as follows: 
$q_I = 0$ if $I$ is a basis   of {\tt 4.14.a} and $q_I = 1$ otherwise. The matroid subdivision $\mathcal{M}(q)$ 
is self-dual. One of its cells is the matroid polytope of {\tt 4.14.a}.
This proves the claim as
$q \in \text{trop}({\rm Gr}(4,8))$ would imply that each matroid in $\mathcal{M}(q)$ is
  realizable over $\mathbb{C}$; see \cite{BS}.
 \end{proof}

\vspace{.2in}

\noindent {\bf Acknowledgments}.
Many colleagues helped us in this project.
We are very grateful~for interactions with
Daniele Agostini,
Tobias Boege, Edgar Costa, Marc H\"{a}rk\"{o}nen, Lukas K\"uhne, Mario Kummer, Marta Panizzut, Yue Ren, Angel David Rios Ortiz,  
and Benjamin Schr\"oter.

\bigskip

\bigskip
 \bigskip

\noindent
\footnotesize
{\bf Authors' addresses:}

\smallskip

\noindent Alheydis Geiger,
MPI-MiS Leipzig
\hfill \url{alheydis.geiger@mis.mpg.de}

\noindent Sachi Hashimoto,
MPI-MiS Leipzig 
\hfill \url{sachi.hashimoto@mis.mpg.de}

\noindent Bernd Sturmfels,
MPI-MiS Leipzig  and UC Berkeley
\hfill \url{bernd@mis.mpg.de}

\noindent Raluca Vlad,
Brown University
\hfill \url{raluca_vlad@brown.edu}

\end{document}